\numberwithin{equation}{section}
\newtheorem{theorem}{Theorem}[section]
\newtheorem{corollary}[theorem]{Corollary}
\newtheorem{lemma}[theorem]{Lemma}
\newtheorem{proposition}[theorem]{Proposition}
\theoremstyle{definition}
\newtheorem{remark}[theorem]{Remark}
\newtheorem*{theorem*}{Theorem}
\begin{document}
\title{Short time existence of the heat flow for Dirac-harmonic maps on closed manifolds}
\author{Johannes Wittmann}
\maketitle
\begin{abstract}
	The heat flow for Dirac-harmonic maps on Riemannian spin manifolds is a modification of the classical heat flow for harmonic maps by coupling it to a spinor. For source manifolds with boundary it was introduced in \cite{ChenJostSunZhu} as a tool to get a general existence program for Dirac-harmonic maps, where also short time existence was obtained. The existence of a global weak solution was established in \cite{globalweaksol}. We prove short time existence of the heat flow for Dirac-harmonic maps on closed manifolds.
\end{abstract}
\tableofcontents
\newpage

\section{Introduction}
\subsection{Dirac-harmonic maps}
Dirac-harmonic maps, introduced in \cite{Dhmaps}, are the critical points of a functional motivated by the supersymmetric non-linear sigma model from quantum field theory.

More precisely, let $M$ be a compact Riemannian spin manifold with fixed spin structure and $N$ a compact Riemannian manifold. We denote by $\Sigma M$ the complex spinor bundle of $M$. For maps $f\colon M\to N$ and spinors $\psi\in\Gamma(\Sigma M\otimes_{\mathbb{R}}f^*TN)$ we consider the functional
\[(f,\psi)\mapsto\frac{1}{2}\int_M\Big(\|df\|^2+(\psi, \slashed{D}^f\psi)\Big)\,dV  \] 
where $(.,.)$ is the inner product induced by the real part of the natural hermitian inner product on $\Sigma M$ and the Riemannian metric on $N$. Moreover, $\slashed{D}^f$ is the Dirac operator of the twisted Dirac bundle $\Sigma M\otimes f^*TN$. Locally,
\[\slashed{D}^f\psi= (\slashed{D}\psi^i)\otimes s_i + (e_\alpha\cdot \psi^i)\otimes\nabla^{f^*TN}_{e_\alpha}s_i\]
where $\psi=\psi^i\otimes s_i$, the $\psi^i$ are local sections of $\Sigma M$, $(s_i)$ is a local frame of $f^*TN$, $(e_\alpha)$ is a local orthonormal frame of $TM$, $\nabla^{f^*TN}$ is the pull-back of the Levi-Civita connection on $TN$, and $\slashed{D}$ is the usual Dirac operator acting on sections of $\Sigma M$. We say that $\slashed{D}^f$ is the \textit{Dirac operator along the map $f$}. 

The critical points of the above functional are called \textit{Dirac-harmonic maps}. They are characterized by the equations
\begin{align}\label{eqqq}
\begin{cases}
\tau(f)=\mathcal{R}(f,\psi),\\
\slashed{D}^f\psi=0.
\end{cases}
\end{align}
Here, $\tau(f)=\textup{tr}\nabla(df)=\big(\nabla_{e_\alpha}(df)\big)(e_\alpha)$ is the tension of $f$ and $\mathcal{R}(f,\psi)$ is given by
\begin{align*}
\mathcal{R}(f,\psi)=\frac{1}{2}(\psi^i,e_\alpha\cdot \psi^j)R^{TN}(\frac{\partial}{\partial x_i}\circ f,\frac{\partial}{\partial x_j}\circ f)df(e_\alpha)
\end{align*}
for $\psi=\psi^i\otimes(\frac{\partial}{\partial x_i}\circ f)$. Moreover, $(.,.)$ denotes the real part of the natural hermitian inner product of $\Sigma M$.

Obvious examples $(f,\psi)$ for Dirac-harmonic maps are the following: $f$ is a harmonic map and $\psi=0$, $f$ is a constant map and $\psi\in\textup{ker}(\slashed{D})$ is a harmonic spinor. In that sense, Dirac-harmonic maps generalize the subject of harmonic maps and harmonic spinors.

Results concerning the regularity of Dirac-harmonic maps have been achieved in \cite{Dhmaps,Dhmapsregt,Dhmapsregt2,Dhmapsregt3,Dhmapsregt4,Dhmapsregt5,Dhmapsregt6,Dhmapsregt7,Dhmapsregt8} (mainly in the case that $M$ is $2$-dimensional, since then the functional is conformally invariant).

\subsection{The heat flow for Dirac-harmonic maps}
Apart from the obvious examples explained above, not many concrete examples for Dirac-harmonic maps are known. For a general overview we refer to the discussion in \cite[Section 2]{AmmGin}. First examples for uncoupled Dirac-harmonic maps (i.e., the mapping part is harmonic) are constructed in \cite[Proposition 2.2]{Dhmaps}. Other examples can be found in \cite{explex},\cite{AmmGin}. For coupled Dirac-harmonic maps (i.e., the mapping part is not harmonic) even less is known \cite{explex},\cite{exafterjostmozhu}. 

With the aim to get a general existence program for Dirac-harmonic maps, the \textit{heat flow for Dirac-harmonic maps},
\begin{numcases}{}
\partial_t u= \tau(u)-\mathcal{R}(u,\psi) &$\mathit{on }\text{ } (0,T)\times M$,\label{dhmhf1}
\\
\slashed{D}^u\psi=0 & $\mathit{on }\text{ }[0,T]\times M$,\label{dhmhf2}
\end{numcases}
was introduced in \cite{ChenJostSunZhu}. In the case that $M$ has non-empty boundary, short time existence (and uniqueness) of \eqref{dhmhf1}--\eqref{dhmhf2} was shown in \cite{ChenJostSunZhu} under the presence of certain boundary conditions. Moreover, the existence of a \textit{global} weak solution of \eqref{dhmhf1}--\eqref{dhmhf2} was obtained in \cite{globalweaksol} (again for certain boundary conditions) with some existence results for Dirac-harmonic maps as an application.

At this point we want to mention another approach, considered by Volker Branding in his PhD thesis \cite{DisBranding}, where he studied the evolution equations for so-called regularized Dirac-harmonic maps.

\subsection{Main result and overview of the proof}
Our main result is the short time existence of the heat flow for Dirac-harmonic maps on closed (i.e., compact and without boundary) manifolds.

\begin{theorem}\label{thereom short time existence} Let $M$ be a closed $m$-dimensional Riemannian spin manifold, $m\equiv 0,1,2,4 \text{ }(\textup{mod } 8)$, and $N$ a closed Riemannian manifold of arbitrary dimension. Let $u_0\in C^{2+\alpha}(M,N)$ for some $0<\alpha<1$ with $\textup{dim}_\mathbb{K}\textup{ker}(\slashed{D}^{u_0})=1$, where 
	\begin{align*}
	\mathbb{K}=\begin{cases}
	\mathbb{C} &\text{if }m\equiv 0,1 \text{ }(\textup{mod } 8),\\
	\mathbb{H} &\text{if }m\equiv 2,4 \text{ }(\textup{mod } 8).\\
	\end{cases}
	\end{align*}

	Moreover, let $\psi_0\in\textup{ker}(\slashed{D}^{u_0})$ with $\|\psi_0\|_{L^2}=1$. Then there exists $T>0$ and a solution $(u_t,\psi_t)_{t\in[0,T]}$,
	\[u\in C^{1,2,\alpha}((0,T)\times M,N),\]
	\[\psi_t\in\textup{ker}(\slashed{D}^{u_t})\hspace{3em}\forall t\in[0,T],\]
	of 
	\begin{align}
	\label{eq def DHMHFmodif}\begin{cases}
	\partial_t u= \tau(u)-\mathcal{R}(u,\psi) &\text{ on }(0,T)\times M,\\
	\slashed{D}^u\psi=0&\text{ on }[0,T]\times M,\\
	\textup{dim}_\mathbb{K}\textup{ker}(\slashed{D}^{u_t})=1 &\text{ for all } t\in[0,T],\\
	\|\psi_t\|_{L^2}=1&\text{ for all } t\in[0,T],\\
	u|_{t=0}=u_0,\\
	\psi|_{t=0}=\psi_0.
	\end{cases}
	\end{align}
	Furthermore, if we are given any $T>0$ and a solution $(u_t,\psi_t)_{t\in[0,T]}$ of \eqref{eq def DHMHFmodif} with $u\in C^{1,2,\alpha}((0,T)\times M,N)$, then this solution is unique up to multiplication of the $\psi_t$ with elements of $\mathbb{K}$ whose norm is equal to one.
\end{theorem}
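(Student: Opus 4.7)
The natural plan is to decouple the system: first solve the spinorial constraint $\slashed{D}^u\psi=0$, $\|\psi\|_{L^2}=1$ for $\psi$ as a nonlocal functional $\psi(u)$ of $u$ over a small $C^{2+\alpha}$-neighborhood of $u_0$, and then treat
\[\partial_t u=\tau(u)-\mathcal{R}(u,\psi(u)),\qquad u|_{t=0}=u_0,\]
as a quasilinear parabolic system with a lower-order nonlocal perturbation, to which the standard short-time theory for heat flows of harmonic-map type applies.

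The serious step is the construction of $u\mapsto\psi(u)$. Because the bundle $\Sigma M\otimes u^*TN$ depends on $u$, one first fixes a trivialization; a convenient choice is to isometrically embed $N\hookrightarrow\mathbb{R}^L$ and view $u^*TN$ as a $u$-dependent subbundle of $M\times\mathbb{R}^L$, or equivalently to use parallel transport in $N$ along the short geodesic from $u_0(x)$ to $u(x)$ for $u$ in a $C^0$-neighborhood of $u_0$. After this gauge fixing, $\{\slashed{D}^u\}_u$ becomes a $C^{2+\alpha}$-regular family of self-adjoint elliptic operators on a fixed Hilbert space. The assumption $m\equiv 0,1,2,4\ (\textup{mod } 8)$ provides a parallel $\mathbb{K}$-structure on $\Sigma M$ commuting with Clifford multiplication and hence with every $\slashed{D}^u$, so each spectral subspace is a $\mathbb{K}$-module. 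Together with upper semicontinuity of the kernel dimension and the normalization $\dim_\mathbb{K}\ker(\slashed{D}^{u_0})=1$, this forces $\dim_\mathbb{K}\ker(\slashed{D}^u)=1$ on a whole $C^{2+\alpha}$-neighborhood of $u_0$: any putative drop is obstructed by applying the $\mathbb{K}$-linear spectral projection onto a small interval around $0$ in the spectrum to $\psi_0$. The resulting rank-one projection $P(u)$ depends $C^{2+\alpha}$-regularly on $u$ via the resolvent and the holomorphic functional calculus, and $\psi(u):=P(u)\psi_0/\|P(u)\psi_0\|_{L^2}$, together with a conventional fixing of the residual $\mathbb{K}^*$-phase, yields the desired Hölder-regular selection.

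With $\psi(u)$ in hand, the $u$-equation differs from the harmonic-map heat flow only by the Hölder-Lipschitz nonlocal lower-order term $\mathcal{R}(u,\psi(u))$, so short-time existence and uniqueness in $C^{1,2,\alpha}$ follow from the usual linearization-and-contraction scheme in parabolic Hölder spaces on a closed manifold. The uniqueness in \eqref{eq def DHMHFmodif} up to a unit $\mathbb{K}$-scalar is then immediate: any two solutions give, after the same gauge fixing, solutions of the reduced equation with the same initial data, hence identical maps $u$; the spinors must then lie in the one-dimensional $\ker(\slashed{D}^{u_t})$ and therefore differ by a $\mathbb{K}$-scalar, forced onto the unit sphere by $\|\psi_t\|_{L^2}=1$. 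The main obstacle I anticipate is the quantitative $C^{2+\alpha}$-regularity of $u\mapsto P(u)$ in the trivialized setting — including careful management of the $u$-dependence of the twisted bundle — which is precisely what is needed to run the parabolic fixed-point iteration. Everything else is a controlled variation of classical heat-flow techniques.
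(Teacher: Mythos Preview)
Your overall strategy---solve the constraint for $\psi(u)$ via the spectral projection written as a resolvent integral, then run a contraction argument on $u$---is exactly the paper's. But there is a genuine gap in your argument that $\dim_\mathbb{K}\ker(\slashed{D}^u)=1$ persists near $u_0$.

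You say the hypothesis $m\equiv 0,1,2,4\pmod 8$ enters by providing a parallel $\mathbb{K}$-structure commuting with Clifford multiplication, and that upper semicontinuity plus the rank of the spectral projection rules out a drop of the kernel. But the spectral projection onto a small interval $[-\Lambda,\Lambda]$ having $\mathbb{K}$-rank one only tells you that $\slashed{D}^u$ has exactly one eigenvalue (counted with $\mathbb{K}$-multiplicity) in that interval; it does \emph{not} tell you this eigenvalue is $0$. Nothing in your sketch prevents $\ker(\slashed{D}^u)=\{0\}$ for $u$ arbitrarily close to $u_0$, with the single small eigenvalue drifting off the origin. The paper closes this gap by invoking the \emph{symmetry of the spectrum} of $\slashed{D}^u$ about zero: since $\lambda$ an eigenvalue forces $-\lambda$ to be one too, and there is exactly one eigenvalue in a symmetric interval about $0$, it must equal $0$. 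This spectral symmetry holds precisely when $m\not\equiv 3\pmod 4$ and is the actual reason $m\equiv 3,7\pmod 8$ are excluded; the further exclusion of $m\equiv 5,6\pmod 8$ is because there the quaternionic structure \emph{anti}commutes with Clifford multiplication, so nonzero eigenspaces are not $\mathbb{H}$-modules and the min--max counting by $\mathbb{K}$-multiplicity fails. Your identification of the role of the dimensional hypothesis is therefore incorrect, and as written the argument does not establish that the constraint equation has any nontrivial solution for $u\neq u_0$.

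A secondary omission: in the embedded formulation you propose, the fixed point of the contraction is a priori only $\mathbb{R}^q$-valued, and a separate maximum-principle step (applied to $\|u-\pi(u)\|^2$) is needed to show it actually lands in $N$. The paper carries this out explicitly; your sketch does not address it.
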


Here, the space $C^{1,2,\alpha}((0,T)\times M,N)$ is defined as follows. First, $C^{1,2,\alpha}((0,T)\times M)$ is the space of functions $u\colon (0,T)\times M\to\mathbb{R}$ s.t. $\sup_{t\in(0,T)}\|u(t,.)\|_{C^{2+\alpha}(M)}<\infty$ and $\sup_{x\in M}\|u(.,x)\|_{C^{1+\frac{\alpha}{2}}(0,T)}<\infty$, c.f. \cite{BDPhil}. Embedding $N$ isometrically into some $\mathbb{R}^q$ we define $C^{1,2,\alpha}((0,T)\times M,N)$ to be the space of all maps $u\colon (0,T)\times M\to N$ s.t. the component functions of $u\colon (0,T)\times M\to N\hookrightarrow \mathbb{R}^q$ belong to $C^{1,2,\alpha}((0,T)\times M)$. Note that every $u\in C^{1,2,\alpha}((0,T)\times M)$ can be continuously extended to $[0,T]\times M$, hence the requirement $u|_{t=0}=u_0$ in \eqref{eq def DHMHFmodif} makes sense.\newline

We want to remark that from our construction of the spinor part $\psi=\psi(u)$ of the solution we will get that $\psi(u)$ depends Lipschitz continuously on $u$ (in the sense of the estimates we derive in Lemma \ref{lemma zentrale Spinorabschaetzung}).\newline

For the existence of initial values we expect something like this: if $M$ is $2$-dimensional and $f\colon M\to N$ is a map with non-vanishing index $\textup{ind}_{f^*TN}(M)\neq 0$ (c.f. Remark \ref{remark index}), then for generic metrics on $N$ it holds that $\textup{dim}_\mathbb{H}\textup{ker}(\slashed{D}^{f})=1$.\newline

In the following, we give an overview of the proof of Theorem \ref{thereom short time existence}. To show short time existence we use the general strategy that was used in \cite{ChenJostSunZhu}, i.e., we first solve the constraint equation \eqref{dhmhf2} for any homotopy of the initial value $u_0$, then we take the solution of the constraint equation and plug it into \eqref{dhmhf1}. After that we use a contraction argument to solve \eqref{dhmhf1} and get the mapping part of the solution.

For the contraction argument we will isometrically embed $N$ into some $\mathbb{R}^q$, rewrite \eqref{dhmhf1} as a heat equation in $\mathbb{R}^q$, and then solve this rewritten equation. However, we will solve the constraint equation \eqref{dhmhf2} in $N$. Note that in \cite{ChenJostSunZhu}, also the constraint equation was rewritten and solved as an equation in $\mathbb{R}^q$.

Clearly we can't solve $\slashed{D}^u\psi=0$ uniquely in the absence of a boundary. However, we can achieve the following: we start with a $1$-dimensional kernel, $\textup{dim}_\mathbb{K}\textup{ker}(\slashed{D}^{u_0})=1$. Then we show that for homotopies of $u_0$ the kernel will stay $1$-dimensional for small times, $\textup{dim}_\mathbb{K}\textup{ker}(\slashed{D}^{u_t})=1$. (This is the only place where the restrictions on the dimension of $M$ will play a role.) Then we impose the additional constraint $\|\psi_t\|_{L^2}=1$ to deduce that we can uniquely solve $\slashed{D}^u\psi=0$ up to multiplication with elements of $\mathbb{K}$ whose norm is equal to one. Now observe that $\mathcal{R}(u,\psi)$ is \textit{invariant} under multiplication of $\psi$ with elements of $\mathbb{K}$ that have norm one. Because of this we can use a contraction argument to show that the mapping part of the solution is in fact unique.

To make the contraction argument work, we need to estimate the solution $\psi=\psi(u)$ of $\slashed{D}^u\psi=0$ in terms of $u$. More precisely, we will construct one such solution and derive estimates for it. To that end, we start with an initial value $\psi_0=\psi(u_0)\in\textup{ker}({\slashed{D}^{u_0}})$. Given a homotopy of $u_0$, we then define $\sigma(u_t)\in\Gamma(\Sigma M\otimes u_t^*TN)$ by identifying the bundles $u_0^*TN$ and $u_t^*TN$ via parallel transport in $N$ along the unique shortest geodesics connecting $u_0(x)$ and $u_t(x)$, $x\in M$. Note that while $\sigma(u_t)$ is in general not in the kernel of $\slashed{D}^{u_t}$, it still has some non-trivial part in the kernel. Hence the projection $\psi(u_t)$ of $\sigma(u_t)$ onto $\textup{ker}(\slashed{D}^{u_t})$ is non-zero. (In particular, we can normalize $\psi(u_t)$ s.t. $\|\psi(u_t)\|_{L^2}=1$.) Writing the projection as a resolvent integral
\[\psi(u_t)=\int_{\gamma}(\mu I-\slashed{D}^{u_t})^{-1}\sigma(u_t)\,d\mu\]
combined with estimates for Dirac operators along maps (which we will derive in Section \ref{section estimates}) we will deduce the necessary estimates for $\psi(u_t)$.

\subsection*{Acknowledgments}I would like to thank Bernd Ammann and Helmut Abels for their ongoing support and many fruitful discussions. I am also grateful to Uli Bunke for his valuable input. My work was supported by the DFG Graduiertenkolleg GRK 1692 ``Curvature, Cycles, and Cohomology''.

\newpage

\section{Preliminaries}

\subsection{Elliptic $W^{k+1}_p$-regularity for Dirac operators along non-smooth maps}

Elliptic $W^{k+1}_p$-regularity for Dirac operators of smooth Dirac bundles is well known. It follows from the mapping properties of pseudo-differential operators with smooth coefficients or it can be shown directly as e.g. in \cite[Theorem 3.2.3]{AmmHabil}. For Dirac operators of non-smooth Dirac bundles it is less known (just as the mapping properties of pseudo-differential operators with non-smooth coefficients are less known). 

In this section we will prove elliptic $W^{k+1}_p$-regularity for Dirac operators along $C^{k+1}$-maps. As a corollary we deduce basic facts about the spectrum of such operators. 

Given $f\in C^1(M,N)$, the Dirac operator along $f$ is an elliptic first order differential operator and formally self-adjoint with respect to the $L^2$-inner product. We view $\slashed{D}^f$ as a bounded densely defined self-adjoint operator
\[\slashed{D}^f\colon\Gamma_{W^1_2}(\Sigma M\otimes f^*TN)\rightarrow\Gamma_{L^2}(\Sigma M\otimes f^*TN).\]
Note that if $f\in C^k(M,N)$, then $f^*TN$ is a $C^k$-vector bundle. Hence we can define $\Gamma_{W^l_p}$ for $l=0,1,\ldots,k$.

\begin{lemma}[Elliptic $W^{k+1}_p$-regularity]\label{lemma ellipticregnonsmooth} Let $M$ be a closed Riemannian spin manifold and $N$ a closed Riemannian manifold, both of arbitrary dimension. Let $f\in C^{k+1}(M,N)$, $k\in\mathbb{N}_{\ge 0}$, and $2\le p<\infty$. Moreover let $\lambda\in\mathbb{C}$ be arbitrary. If
	\[(\lambda I-\slashed{D}^f)\psi=\varphi\]
	for $\psi\in \Gamma_{W^1_2}(\Sigma M\otimes f^*TN)$, $\varphi\in \Gamma_{W^k_p}(\Sigma M\otimes f^*TN)$, then $\psi\in \Gamma_{W^{k+1}_p}(\Sigma M\otimes f^*TN)$ and
	\[\|\psi\|_{W^{k+1}_p}\le C\big(\|\varphi\|_{W^k_p}+\|\psi\|_{L^p}\big)\]
	where $C=C(f,\Sigma M)>0$ is independent of $\psi$, $\varphi$.
\end{lemma}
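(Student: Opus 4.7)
The plan is to reduce to a local statement on Euclidean balls via a partition of unity and trivializations of $\Sigma M\otimes f^*TN$, and then prove it by induction on $k$. In any such local trivialization over a ball $B\subset\mathbb{R}^m$, the operator takes the form
\[\slashed{D}^f=\sum_\alpha A^\alpha(x)\partial_\alpha+B(x),\]
where the matrix-valued $A^\alpha$ are \emph{smooth} because they are determined by the Riemannian metric on $M$, the spin structure, and Clifford multiplication on $\Sigma M$, none of which depend on $f$. The dependence on $f$ enters only through the Christoffel symbols of the pulled-back connection $\nabla^{f^*TN}$, which involve $df$ and hence are of class $C^k$ when $f\in C^{k+1}$. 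Thus $B\in C^k$. The parameter $\lambda$ plays no substantial role: $\lambda\psi$ contributes a term that is absorbed into $\|\psi\|_{L^p}$ on the right-hand side of the final estimate.

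For the base case $k=0$ I would establish the local $L^p\to W^1_p$ estimate for a first-order elliptic system with continuous coefficients by a standard freezing-of-coefficients/Calderón--Zygmund argument: writing $\slashed{D}^f=\slashed{D}^f(x_0)+[\slashed{D}^f-\slashed{D}^f(x_0)]$, the frozen-coefficient operator is constant-coefficient elliptic and satisfies the full $L^p$-theory, while the perturbation is made arbitrarily small in operator norm by choosing the ball small, using uniform continuity of the coefficients; the perturbation is absorbed. To initialize the integrability one bootstraps from the hypothesis $\psi\in W^1_2$: the inclusion $W^1_2\hookrightarrow L^{q_1}$ with $q_1=2m/(m-2)$ (or any finite exponent in low dimensions), applied together with the $W^1_q$-estimate for the equation $\slashed{D}^f\psi=\lambda\psi-\varphi$, raises the integrability of $\psi$ in finitely many steps up to the target $L^p$, after which a final application of the $W^1_p$-estimate produces $\psi\in W^1_p$ with the claimed bound.

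For the inductive step, assume the result is known at level $k-1$, so that $\psi\in W^k_p$ already. Differentiating the local equation and applying the product rule yields, for each multi-index $\beta$ with $|\beta|\le k$, a relation of the form
\[\sum_\alpha A^\alpha(x)\partial_\alpha(\partial^\beta\psi)+B(x)\partial^\beta\psi=\partial^\beta\varphi+\lambda\partial^\beta\psi+\sum_{\gamma<\beta}c_{\beta\gamma}(x)\partial^\gamma\psi,\]
where the commutator coefficients $c_{\beta\gamma}$ are built from derivatives of $A^\alpha$ and $B$ up to order $k$; since $A^\alpha$ is smooth and $B\in C^k$, all of these belong to $C^0$, and the corresponding terms lie in $L^p$ by the product/algebra properties of $W^k_p\cdot C^0$. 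Hence $\partial^\beta\psi$ satisfies a first-order elliptic system of the $k=0$ type with $L^p$ right-hand side, and the base case yields $\partial^\beta\psi\in W^1_p$ with a controlled bound. Summing over $|\beta|\le k$ gives $\psi\in W^{k+1}_p$ together with the estimate.

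The technical heart of the argument — and the only step where anything non-routine happens — is the $k=0$ case, namely the $L^p$-to-$W^1_p$ theory for first-order elliptic systems whose zeroth-order coefficients are merely continuous; everything above it is bookkeeping with product estimates in Sobolev spaces of positive integer order. The induction leverages the fact that the principal symbol of $\slashed{D}^f$ is $f$-independent, so only the lower-order term ever needs to be differentiated, and one loses no regularity compared to a purely smooth setting.
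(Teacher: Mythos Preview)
Your argument is correct, but the paper takes a different and more geometric route. Rather than localizing and freezing coefficients, the paper approximates $f$ by a nearby \emph{smooth} map $g\in C^\infty(M,N)$ and uses parallel transport $P$ along the short geodesics from $g(x)$ to $f(x)$ to identify $\Sigma M\otimes g^*TN$ with $\Sigma M\otimes f^*TN$. The key computation is that $G:=P^{-1}\slashed{D}^f P-\slashed{D}^g$ is a differential operator of order \emph{zero} with $C^k$ coefficients (the principal parts cancel, leaving only the difference of two connection $1$-forms). The equation $\slashed{D}^f\tilde\psi=\tilde\varphi$ then transforms to $\slashed{D}^g\psi=\varphi-G\psi$ on the smooth bundle, and the known $W^{k+1}_p$-regularity for smooth Dirac bundles combined with a bootstrap gives the result immediately.

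Both approaches exploit the same structural fact, namely that the principal symbol of $\slashed{D}^f$ is $f$-independent, but they package it differently. The paper's method is global and uses the smooth theory as a black box, which keeps the argument short and avoids redoing Calder\'on--Zygmund by hand. Your method is the standard PDE route and is entirely self-contained; it would also apply verbatim to any first-order elliptic operator with smooth principal part and $C^k$ lower-order part, not just twisted Dirac operators. The paper's trick of conjugating by parallel transport to a smooth reference bundle is worth knowing, since it reappears (in a more quantitative form) later in the paper when comparing $\slashed{D}^{\pi\circ u_t}$ for varying $t$.
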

The basic idea of the proof is to approximate both $\slashed{D}^f$ and the bundle $\Sigma M\otimes f^*TN$ by smooth objects.

\begin{proof}[Proof of Lemma \ref{lemma ellipticregnonsmooth}]First we show the lemma for $\lambda=0$. Given $f\in C^{k+1}(M,N)$ we choose $g\in C^\infty(M,N)$ with $d^N(f(x),g(x))<c$ for all $x\in M$ where $0<c<\frac12\textup{inj}(N)$. In particular we can connect $g(x)$ and $f(x)$ by a unique shortest geodesic of $N$ for every $x\in M$. The parallel transport in $N$ along these geodesics induces $C^{k+1}$-isomorphisms of vector bundles
	\[P\colon g^*TN\to f^*TN,\]
	\[P\colon \Sigma M\otimes g^*TN\to \Sigma M\otimes f^*TN.\]	
	We also get induced isomorphisms of Banach spaces
	\[P\colon \Gamma_{W^{l}_p}(\Sigma M\otimes g^*TN)\to \Gamma_{W^{l}_p}(\Sigma M\otimes f^*TN)\]
	for $l=0,1,\ldots,k+1$.	We consider
	\[G:=P^{-1}\slashed{D}^fP-\slashed{D}^g.\]
	Note that $G$, acting on sections of $\Sigma M\otimes g^*TN$, is a differential operator of order zero.	Heuristically this is the case because in the definition of $G$ the difference of the ordinary Dirac operators $\slashed{D}$ acting on $\Sigma M$ cancel out and we are left with the difference of two covariant derivatives. A covariant derivative has the identity as principal symbol, hence the difference of two covariant derivatives has zero as principal symbol. Therefore $G$ is of order zero. To make this precise we set 
	\[\nabla:=\nabla^{g^*TN}, \hspace{3em} \tilde{\nabla}:=P^{-1}\nabla^{f^*TN}P\]
	(note that $\tilde{\nabla}$ is a (non-smooth) covariant derivative on $g^*TN$). Moreover we choose local frames $(s_j)$ and $(\psi^i)$ of $g^*TN$ and $\Sigma M$, respectively. Given a section $\psi$ of $\Sigma M\otimes g^*TN$ we write
	\[\psi=\lambda^j_i (\psi^i\otimes s_j)\]
	The local formula for Dirac operators along maps yields
	\begin{align*}
	G\psi& = \lambda^j_i (e_\alpha \cdot\psi^i)\otimes \Big(\tilde{\nabla}_{e_\alpha}s_j-\nabla_{e_\alpha}s_j\Big)\\
	&=\lambda^j_i (e_\alpha \cdot\psi^i)\otimes\Big(\tilde{\omega}^l_j(e_\alpha)s_l -\omega^l_j(e_\alpha)s_l\Big)\\
	&= \lambda^j_i(\tilde{\omega}^l_j(e_\alpha)-\omega^l_j(e_\alpha))e_\alpha \cdot (\psi^i\otimes s_l).
	\end{align*}
	From this it is easy to see that $G$ is a differential operator of order zero with $C^k$-coefficients. In particular $G$ extends to a bounded linear map
	\begin{align*}
	G\colon \Gamma_{W^{l}_p}(\Sigma M\otimes g^*TN)\to \Gamma_{W^{l}_p}(\Sigma M\otimes g^*TN),
	\end{align*}
	for $l=0,1,\ldots,k$.
	Now assume that we have
	\[\slashed{D}^f\tilde{\psi}=\tilde{\varphi}\]
	for some $\tilde{\psi}\in \Gamma_{W^1_2}$ and $\tilde{\varphi}\in\Gamma_{W^k_p}$. This is equivalent to
	\[\slashed{D}^g\psi =\varphi - G\psi\]
	where $\psi:=P^{-1}\tilde{\psi}\in\Gamma_{W^1_2}$, $\varphi:=P^{-1}\tilde{\varphi}\in\Gamma_{W^k_p}$. Using the elliptic $W^{k+1}_p$-regularity smooth Dirac bundles \cite[Theorem 3.2.3]{AmmHabil} and a standard bootstrap argument we get $\psi\in \Gamma_{W^{k+1}_p}$ and the existence of some $C=C(\Sigma M)>0$ s.t.
	\begin{align*}
	\|\psi\|_{W^{k+1}_p}\le C(\|\psi\|_{L^p}+\|\varphi\|_{W^k_p}).
	\end{align*}
	Since $P$ is an isomorphism on the Sobolev spaces this implies
	\begin{align*}
	\|\tilde{\psi}\|_{W^{k+1}_p}\le\tilde{C}(\|\tilde{\psi}\|_{L^p}+\|\tilde{\varphi}\|_{W^k_p}).
	\end{align*}
	We have shown the lemma for $\lambda=0$. If $\lambda\neq 0$, then we use the case $\lambda=0$ and a bootstrap argument.
\end{proof}

\begin{corollary}[Spectral properties]\label{cor spectral properties}Assume that $M$ and $N$ are closed. Let $f\in C^1(M,N)$. Given any element $\mu$ of the resolvent set of $\slashed{D}^f$ it holds that the resolvent $R(\mu,\slashed{D}^f)\colon \Gamma_{L^2}\to \Gamma_{L^2}$ of $\slashed{D}^f\colon \Gamma_{W^1_2}\to \Gamma_{L^2}$ is bounded as a map
	\[R(\mu,\slashed{D}^f)\colon \Gamma_{L^2}\to \Gamma_{W^1_2}.\] 
	In particular $\slashed{D}^f$ has compact resolvent. Hence the spectrum $\textup{spec}(\slashed{D}^f)$ of $\slashed{D}^f$ is equal to its point spectrum and $\textup{spec}(\slashed{D}^f)\subset \mathbb{\mathbb{R}}$ is discrete.
\end{corollary}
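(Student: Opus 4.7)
The plan is to read off boundedness of the resolvent into $\Gamma_{W^1_2}$ directly from Lemma \ref{lemma ellipticregnonsmooth} and then combine with Rellich--Kondrachov and standard self-adjoint spectral theory.

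First I would invoke Lemma \ref{lemma ellipticregnonsmooth} with $k=0$, $p=2$, and $\lambda=\mu$. Given $\varphi\in\Gamma_{L^2}$, set $\psi:=R(\mu,\slashed{D}^f)\varphi$; by definition of the resolvent $\psi\in\Gamma_{W^1_2}$ and $(\mu I-\slashed{D}^f)\psi=\varphi$. The lemma then gives
\[
\|\psi\|_{W^1_2}\le C\bigl(\|\varphi\|_{L^2}+\|\psi\|_{L^2}\bigr)\le C\bigl(1+\|R(\mu,\slashed{D}^f)\|_{L^2\to L^2}\bigr)\|\varphi\|_{L^2},
\]
which is exactly the desired boundedness $R(\mu,\slashed{D}^f)\colon\Gamma_{L^2}\to\Gamma_{W^1_2}$. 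Note that the resolvent set is non-empty: since $\slashed{D}^f$ is densely defined and self-adjoint on $\Gamma_{L^2}$, $\mathbb{C}\setminus\mathbb{R}$ lies in the resolvent set, so at least one such $\mu$ exists and the argument is not vacuous.

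Next I would use the Rellich--Kondrachov compact embedding $\Gamma_{W^1_2}\hookrightarrow\Gamma_{L^2}$ (valid on the closed manifold $M$ for the $C^1$-bundle $\Sigma M\otimes f^*TN$, e.g.\ via a finite trivializing cover). Composing with the bounded map from the previous step shows that $R(\mu,\slashed{D}^f)\colon\Gamma_{L^2}\to\Gamma_{L^2}$ is compact, i.e.\ $\slashed{D}^f$ has compact resolvent.

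Finally I would cite the standard spectral theorem for self-adjoint operators with compact resolvent: the spectrum is real (by self-adjointness), purely discrete, and consists entirely of eigenvalues of finite multiplicity, which in particular gives $\textup{spec}(\slashed{D}^f)\subset\mathbb{R}$ and equal to the point spectrum. I do not anticipate a real obstacle here; the only point that requires care is to verify that Rellich applies to sections of a merely $C^1$ vector bundle, but this is immediate after localizing to a finite cover by trivializations, reducing to the classical Rellich theorem for $\mathbb{R}^N$-valued Sobolev functions on Euclidean balls.
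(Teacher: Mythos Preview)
Your proof is correct and is precisely the standard argument the paper leaves implicit: the corollary is stated without proof, and your derivation from Lemma~\ref{lemma ellipticregnonsmooth} (with $k=0$, $p=2$) together with Rellich--Kondrachov and self-adjoint spectral theory is exactly the intended route.
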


\subsection{Quaternionic structures on spinor bundles}\label{section quat str}
In this section we collect and recall some facts about quaternionic structures on spinor bundles.

Let $m\equiv 2,3,4 \text{ }(\textup{mod } 8)$. Let $\rho\colon \mathbb{C}l_m\rightarrow \textup{End}_{\mathbb{C}}(\Sigma_m)$ be an irreducible complex algebra representation of the complex Clifford algebra $\mathbb{C}l_m$. By \cite[p. 31]{Fri} and \cite[Theorem 2.2.2.]{DisHer} there exists a quaternionic structure $j\colon \Sigma_m\rightarrow \Sigma_m$ on $\Sigma_m$ (i.e., $j$ is an $\mathbb{R}$-linear map with $j^2=-\textup{id}_V$ and $j(iv)=-ij(v)$ for all $v\in V$) s.t.
\begin{align}\label{eq z35}
j\circ \rho(x)=\rho(x)\circ j
\end{align}
for all $x\in\mathbb{R}^m\subset\mathbb{C}^m\subset \mathbb{C}l_m$. In particular, the complex vector spaces $\Sigma_m$ turn into quaternionic vector spaces (i.e., right $\mathbb{H}$-modules).

Now let $M$ be a $m$-dimensional Riemannian spin manifold with spin structure $\textup{Spin}(M)$. Then every fiber of the (complex) spinor bundle $\Sigma M=\textup{Spin}(M)\times_\rho \Sigma_m$ turns into a quaternionic vector space by defining
\[[p,v]h:=[p,vh]\]
for all $p\in \textup{Spin}(M)$, $v\in \Sigma_m$, and $h\in\mathbb{H}$. Note that this is well-defined because of \eqref{eq z35}. Moreover, given a manifold $N$ and $f\in C^1(M,N)$, every fiber of $\Sigma M\otimes_{\mathbb{R}}f^*TN$ turns into a quaternionic vector space by defining
\[(a\otimes b)h:=(ah)\otimes b\]
for all $a\in \Sigma_xM$, $b\in (f^*TN)_x$, and $h\in\mathbb{H}$. We have the following proposition.

\begin{proposition}Let $f\in C^1(M,N)$. Then it holds that
	\[\slashed{D}^f(\varphi h)=(\slashed{D}^f\varphi)h\]
	for all $\varphi\in\Gamma_{C^1}(\Sigma M\otimes_{\mathbb{R}}f^*TN)$ and all $h\in\mathbb{H}$. In particular, all the eigenspaces of $\slashed{D}^f$ are quaternionic vector spaces.
\end{proposition}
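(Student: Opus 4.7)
The plan is to reduce the claim to the special element $h = j \in \mathbb{H}$. Since the Dirac operator $\slashed{D}^f$ is $\mathbb{C}$-linear and $\mathbb{H} = \mathbb{C} \oplus \mathbb{C}\cdot j$ as a real vector space (with right multiplication by $i \in \mathbb{H}$ agreeing with the ambient complex structure), it will suffice to verify $\slashed{D}^f(\varphi j) = (\slashed{D}^f \varphi) j$ for $\varphi \in \Gamma_{C^1}(\Sigma M \otimes_{\mathbb{R}} f^*TN)$. The whole computation then reduces to two commutation properties at the level of $\Sigma M$: that the fiberwise quaternionic structure $j$ commutes with Clifford multiplication and with the spin connection $\nabla^{\Sigma M}$.

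The first property is immediate from \eqref{eq z35}: for any local orthonormal frame $(e_\alpha)$ of $TM$ and any local spinor $\psi^i$, one has $j(e_\alpha \cdot \psi^i) = e_\alpha \cdot (j \psi^i)$. For the second property, I would argue that the algebraic $j \colon \Sigma_m \to \Sigma_m$ commutes with the whole representation of $\textup{Spin}(m) \subset \mathbb{C}l_m$: iterating \eqref{eq z35} shows that $j$ commutes with $\rho(x_1)\cdots \rho(x_k)$ for arbitrary $x_1,\ldots,x_k \in \mathbb{R}^m$, and in particular with $\rho|_{\textup{Spin}(m)}$. Therefore $j$ descends to a well-defined bundle endomorphism of $\Sigma M = \textup{Spin}(M) \times_\rho \Sigma_m$, and because it is induced by a structure-group invariant map on the typical fiber, it is parallel with respect to the induced spin connection. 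Hence $\nabla^{\Sigma M}(j \psi^i) = j(\nabla^{\Sigma M}\psi^i)$, and combined with the first property this gives $\slashed{D}(\psi^i j) = (\slashed{D}\psi^i) j$ for local spinors.

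With these two commutation relations in hand, I would finish using the local formula for $\slashed{D}^f$ displayed in the introduction. Writing $\varphi = \varphi^i \otimes s_i$, the definition of the right $\mathbb{H}$-action on the tensor product gives $\varphi h = (\varphi^i h) \otimes s_i$, and then
\[
\slashed{D}^f(\varphi h) = \bigl(\slashed{D}(\varphi^i h)\bigr) \otimes s_i + \bigl(e_\alpha \cdot (\varphi^i h)\bigr) \otimes \nabla^{f^*TN}_{e_\alpha} s_i = \bigl((\slashed{D}\varphi^i) h\bigr) \otimes s_i + \bigl((e_\alpha \cdot \varphi^i) h\bigr) \otimes \nabla^{f^*TN}_{e_\alpha} s_i,
\]
which by the same definition of the action equals $(\slashed{D}^f \varphi) h$. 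The ``in particular'' statement is then immediate: if $\slashed{D}^f \varphi = \lambda \varphi$ with $\lambda \in \mathbb{R}$, then $\slashed{D}^f(\varphi h) = (\slashed{D}^f \varphi) h = \lambda(\varphi h)$ for any $h \in \mathbb{H}$, so each eigenspace is stable under the right $\mathbb{H}$-action and therefore inherits the structure of a quaternionic vector space.

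The only genuinely non-routine step I expect is the parallelism of $j$; everything else is a direct unwinding of definitions. I would handle that step either by the structure-group invariance argument sketched above, or, if a more hands-on justification is needed, by writing $j$ in a local spin frame and observing that the connection $1$-forms take values in $\mathfrak{spin}(m) \subset \mathbb{C}l_m$, whose action commutes with $j$ by the same iteration of \eqref{eq z35}.
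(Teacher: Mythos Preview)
Your argument is correct. The paper itself states this proposition without proof, so there is no argument in the paper to compare against; your proof supplies exactly the kind of routine verification the author presumably had in mind. The reduction to $h=j$ via $\mathbb{C}$-linearity, the commutation of $j$ with Clifford multiplication from \eqref{eq z35}, the parallelism of $j$ via $\textup{Spin}(m)$-equivariance, and the final local computation are all sound. One small point worth making explicit in the last step: the conclusion that the $\lambda$-eigenspace is $\mathbb{H}$-stable uses $\lambda\in\mathbb{R}$ in an essential way (since $j$ is only conjugate-linear), which you do state but might emphasize, as this is precisely why the argument applies to eigenspaces of the self-adjoint operator $\slashed{D}^f$ but would fail for a general $\mathbb{C}$-linear operator.
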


The construction of the natural hermitian inner product on $\Sigma M$ (see e.g. \cite{Hij}) together with the fact that it is unique up to multiplication with positive constants yields the following lemma.
\begin{lemma}\label{lemma bundle metric inv. unit quat}The real part $(.,.)$ of the natural hermitian inner product on $\Sigma M$ is invariant under multiplication by unit quaternions, i.e., it holds that
	\[(\varphi_1h,\varphi_2h)=(\varphi_1,\varphi_2)\]
	for all $\varphi_1,\varphi_2\in\Sigma_xM$, $x\in M$, $h\in S^3\subset\mathbb{C}^2=\mathbb{H}$.
\end{lemma}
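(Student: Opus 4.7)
The plan is to reduce the lemma to the special case where multiplication by $h$ is replaced by the action of the quaternionic structure $j$, and then invoke the uniqueness (up to positive constants) of the natural hermitian inner product to handle that case. The whole strategy exploits the decomposition $\mathbb{H} = \mathbb{C} \oplus \mathbb{C} j$: every unit quaternion can be written as $h = z_1 + z_2 j$ with $z_1, z_2 \in \mathbb{C}$ and $|z_1|^2 + |z_2|^2 = 1$, so that the right action reads $\varphi h = z_1 \varphi + z_2\, j(\varphi)$.

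The key intermediate step is to show that $(j\varphi_1, j\varphi_2) = (\varphi_1, \varphi_2)$. For this I would introduce the auxiliary form $B(\varphi_1,\varphi_2) := \overline{\langle j\varphi_1, j\varphi_2\rangle}$, where $\langle\cdot,\cdot\rangle$ is the hermitian inner product whose real part is $(\cdot,\cdot)$. Using that $j$ is antilinear and $j^2 = -\textup{id}$, one checks that $B$ is again sesquilinear, hermitian and positive definite; using \eqref{eq z35}, i.e.\ $j\circ\rho(x)=\rho(x)\circ j$ for real $x$, one checks that Clifford multiplication by real tangent vectors is skew with respect to $B$ as well, so $B$ is a natural hermitian inner product in the sense of \cite{Hij}. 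By the cited uniqueness up to positive constants, $B = c\langle\cdot,\cdot\rangle$ for some $c > 0$. Applying the relation $\overline{\langle j\varphi,j\varphi\rangle} = c\langle\varphi,\varphi\rangle$ once more with $\varphi$ replaced by $j\varphi$ and using $j^2=-\textup{id}$ gives $c^2 = 1$, hence $c=1$; taking real parts yields $(j\varphi_1,j\varphi_2) = (\varphi_1,\varphi_2)$, and as a byproduct the identity $\langle j\varphi_1, \varphi_2\rangle = -\overline{\langle \varphi_1, j\varphi_2\rangle}$ (obtained by writing $\varphi_2 = -j(j\varphi_2)$).

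With these two ingredients in hand, the proof is a direct expansion. I would compute
\[
\langle \varphi_1 h, \varphi_2 h\rangle = |z_1|^2\langle\varphi_1,\varphi_2\rangle + |z_2|^2\langle j\varphi_1, j\varphi_2\rangle + z_1\bar z_2\langle\varphi_1, j\varphi_2\rangle + z_2\bar z_1\langle j\varphi_1,\varphi_2\rangle,
\]
take real parts, use $\langle j\varphi_1, j\varphi_2\rangle = \overline{\langle\varphi_1,\varphi_2\rangle}$ to collapse the diagonal into $(|z_1|^2+|z_2|^2)(\varphi_1,\varphi_2) = (\varphi_1,\varphi_2)$, and use the identity $\langle j\varphi_1,\varphi_2\rangle = -\overline{\langle\varphi_1, j\varphi_2\rangle}$ to see that the two cross-terms are complex conjugates with opposite signs, so they cancel after taking real parts.

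I expect the only delicate step to be the verification that $B$ really inherits the axioms of a natural hermitian inner product (in particular sesquilinearity with the correct convention, which relies crucially on the antilinearity of $j$, and the skew-symmetry of Clifford multiplication, which relies on \eqref{eq z35}); everything else is bookkeeping. An alternative would be to work directly at the level of the representation $\Sigma_m$, where Schur's lemma gives the uniqueness more transparently, and then transport the pointwise identity to $\Sigma M$ fiberwise; but the argument via $B$ is essentially the same and keeps the notation closer to the paper.
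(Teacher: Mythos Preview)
Your proposal is correct and follows essentially the approach the paper indicates: the paper gives no detailed proof, only the one-line remark that the lemma follows from the construction of the natural hermitian inner product together with its uniqueness up to positive constants, and your argument makes this precise by applying uniqueness to the auxiliary form $B(\varphi_1,\varphi_2)=\overline{\langle j\varphi_1,j\varphi_2\rangle}$ and then handling a general unit quaternion via the decomposition $\mathbb{H}=\mathbb{C}\oplus\mathbb{C}j$. The verification that $B$ satisfies the same axioms and that the cross terms cancel after taking real parts is routine and carried out correctly.
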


\newpage

\section{Setup for the contraction argument}\label{section solution space}

In this section the setup for the contraction argument is developed. After we have stated the precise setting, we will take care of the constraint equation \eqref{dhmhf2} in Section \ref{section constraint}.

\subsection{Translation of equation \eqref{dhmhf1} into $\mathbb{R}^q$}\label{section tubular}
Let $i\colon N\rightarrow \mathbb{R}^q$ be an isometric embedding of $N$ in $\mathbb{R}^q$. In the following we view $N$ as an embedded Riemannian submanifold of $\mathbb{R}^q$ via $i$ and we rewrite the heat flow for Dirac-harmonic maps as an equation in $\mathbb{R}^q$. Let $\delta>0$ s.t. the set
\[N_\delta:=\{y\in\mathbb{R}^q\text{ }|\text{ }d(y,N)<\delta\}\]
is a tubular neighborhood of $N$ in $\mathbb{R}^q$ and there exists a smooth map, called \textit{nearest point projection},
\[\pi\colon N_\delta\rightarrow N\]
s.t. 
\begin{enumerate}
	\item we have $d\pi_xv=pr_{T_xN}v$ for all $x\in N$, $v\in\mathbb{R}^q$,
	\item for every $y\in N_\delta$ it holds that $\pi(y)$ is the unique point of $N$ closest to $y$,
	\item $\pi\colon N_\delta\rightarrow N$ can be extended to a smooth map $\pi\colon \mathbb{R}^q\rightarrow \mathbb{R}^q$ with compact support.
\end{enumerate}
For $A,B\in\{1,\ldots,q\}$ and $z\in\mathbb{R}^q$ we write
\[\pi^A_B(z):=\frac{\partial\pi^A}{\partial z_B}(z)\]
for the $B$-th partial derivative of the $A$-th component function of $\pi\colon\mathbb{R}^q\to\mathbb{R}^q$. Similarly,
\[\pi^A_{BC}(z):=\frac{\partial^2 \pi^A}{\partial z_B\partial z_C}(z).\]

\begin{lemma}
	A tuple $(u,\psi)$ where $u\colon [0,T]\times M\to N$ and $\psi\in\Gamma(pr_2^*\Sigma M\otimes u^*TN)$ is a solution of \eqref{dhmhf1} if and only if it is a solution of
	\begin{align*}
	\partial_tu^A-\Delta u^A &=-\pi^A_{BC}(u)\langle \nabla u^B,\nabla u^C\rangle-\pi^A_B(u)\pi^C_{BD}(u)\pi^C_{EF}(u)(\psi^D,\nabla u^E\cdot\psi^F)
	\end{align*}
on	$(0,T)\times M$, for $A=1,\ldots,q$, where we write $u^A\colon M\rightarrow \mathbb{R}$ for the $A$-th component function of $u\colon [0,T]\times M\rightarrow N\subset\mathbb{R}^q$ and the global sections $\psi^A\in\Gamma(\Sigma M)$ are defined by $\psi=\psi^A\otimes (\partial_A\circ u)$. (Here we write $(\partial_A)_{A=1,\ldots,q}$ for the standard basis of $T\mathbb{R}^q$.) Moreover, $\nabla$ denotes the gradient on $M$ and $\langle.,.\rangle$ the Riemannian metric on $M$.
\end{lemma}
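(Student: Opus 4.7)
The plan is a direct pointwise calculation in the embedded picture $N\hookrightarrow \mathbb{R}^q$, combined with the algebraic identity $\pi\circ u=u$. Since Clifford multiplication and the spinor inner product act tensorially in the $u^*TN$-factor, the equivalence of the two equations reduces to rewriting both $\tau(u)$ and $\mathcal{R}(u,\psi)$ componentwise in $\mathbb{R}^q$.

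First I would handle the tension field. Because $u(M)\subset N$, we have $\pi^A(u)=u^A$; differentiating once gives $\pi^A_B(u)\partial_\alpha u^B=\partial_\alpha u^A$, which is just the fact that $d\pi_{u(x)}$ is the orthogonal projection onto $T_{u(x)}N$. Differentiating a second time with respect to a local orthonormal frame on $M$ and tracing yields
\[
\Delta u^A \;=\; \pi^A_B(u)\,\Delta u^B \;+\; \pi^A_{BC}(u)\langle \nabla u^B,\nabla u^C\rangle.
\]
The first summand on the right is the tangential part of $\Delta u$ and agrees with $\tau(u)^A$; the second is the normal part, which in this setup is the second fundamental form $\mathrm{II}(du,du)$ of $N\subset\mathbb{R}^q$ expressed through $\pi$. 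Hence $\tau(u)^A=\Delta u^A-\pi^A_{BC}(u)\langle\nabla u^B,\nabla u^C\rangle$, producing the first term on the right-hand side of the claimed equation.

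Next I would rewrite $\mathcal{R}(u,\psi)$. From the computation above, for tangent vectors $X,Y$ one has $\mathrm{II}(X,Y)^C=\pi^C_{BD}(u)X^B Y^D$. Since $\mathbb{R}^q$ is flat, the Gauss equation reads
\[
\langle R^N(X,Y)W,Z\rangle=\langle\mathrm{II}(X,Z),\mathrm{II}(Y,W)\rangle-\langle\mathrm{II}(X,W),\mathrm{II}(Y,Z)\rangle.
\]
Plugging this into $\mathcal{R}(u,\psi)=\tfrac12(\psi^i,e_\alpha\cdot\psi^j)R^{TN}(\partial_{x_i},\partial_{x_j})du(e_\alpha)$, expanding in the ambient basis $\partial_A$, and using the identity $\sum_\beta E_\beta^A E_\beta^B=\pi^A_B(u)$ for a local orthonormal frame $(E_\beta)$ of $T_{u(x)}N$ to write the tangential projection of the curvature expression, the $A$-th component collapses to
\[
\mathcal{R}(u,\psi)^A=\pi^A_B(u)\,\pi^C_{BD}(u)\,\pi^C_{EF}(u)\,(\psi^D,\nabla u^E\cdot\psi^F).
\]
Here the antisymmetry of $R^N$ in the first two slots and the symmetry $\pi^C_{BD}=\pi^C_{DB}$ account for the factor $\tfrac12$, and the normal components of $\psi^A$ drop out because $\pi^C_{BD}$ already lives on $T_{u(x)}N$ in its last two indices.

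Combining both parts, the evolution equation $\partial_t u=\tau(u)-\mathcal{R}(u,\psi)$, read off componentwise in $\mathbb{R}^q$, is exactly the stated system. The main obstacle is step three: one must be careful in passing from an intrinsic frame $(\partial_{x_i})$ of $TN$ to the ambient basis $(\partial_A)$ and verify that the extra normal contributions really vanish; beyond that, the argument is the standard embedding computation for harmonic-type maps, extended by the curvature-times-spinor term.
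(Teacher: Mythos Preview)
Your direct computation is correct and is precisely the ``direct calculation'' route that the paper attributes (without giving details) to \cite{JWDissertation}; the paper itself provides no proof, only citing that reference and the alternative of rederiving the Euler--Lagrange equations in the tubular neighbourhood from \cite{ChenJostSunZhu}. The identities you isolate---differentiating $\pi^A(u)=u^A$ twice to get $\tau(u)^A=\Delta u^A-\pi^A_{BC}(u)\langle\nabla u^B,\nabla u^C\rangle$, and combining the Gauss equation with $\mathrm{II}(X,Y)^C=\pi^C_{BD}(u)X^BY^D$ and the tangentiality relation $\pi^B_D(u)\psi^D=\psi^B$ for the curvature term---are exactly what underlies the formulas $\tau(u)=(\Delta u^A+F_1^A)\partial_A$, $\mathcal{R}(u,\psi)=-F_2^A\partial_A$, and $\mathrm{II}(du(e_\alpha),du(e_\alpha))=-F_1^A\partial_A$ that the paper records immediately after the lemma.
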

In \cite{ChenJostSunZhu} this lemma was shown by deriving the Euler-Lagrange equations \eqref{eqqq} in the setting provided by the tubular neighborhood. In \cite{JWDissertation} it was shown by direct calculations. For future reference we define
\begin{align*}
F_1^A(u)&:=-\pi^A_{BC}(u)\langle \nabla u^B,\nabla u^C\rangle,\\
F_2^A(u,\psi)&:=-\pi^A_B(u)\pi^C_{BD}(u)\pi^C_{EF}(u)(\psi^D,\nabla u^E\cdot\psi^F).
\end{align*}
Note that for $u\in C^1(M,N)$ and $\psi\in\Gamma(\Sigma M\otimes u^*TN)$ we have
\begin{align*}
\mathcal{R}(u,\psi)|_p&=-F^A_2(u,\psi)|_p \partial_A|_{u(p)},\\
II(du_p(e_\alpha),du_p(e_\alpha))&=-F_1^A(u)|_p\partial_A|_{u(p)}
\end{align*}
for all $p\in M$ where $II$ denotes the second fundamental form of $N\subset\mathbb{R}^q$ and $(e_\alpha)$ is an orthonormal basis of $T_pM$. In particular,
\[\tau(u)|_p=\big(\Delta u^A|_p+F^A_1(u)|_p\big)\partial_A|_{u(p)}.\]
Our notation differs from \cite{ChenJostSunZhu}. We have
\begin{align*}
F_1^A(u)&=\langle \Omega^A_B, du^B\rangle,\\
F_2^A(u,\psi)&=-\langle\tilde{\Omega}^A_B,du^B\rangle
\end{align*}
where on the right hand sides we used the notation of \cite{ChenJostSunZhu}.

\subsection{The fixed point operator and the solution space}\label{section fixed point operator}
For every $T>0$ we denote by $X_T$ the Banach space of bounded maps $[0,T]\to C^1(M,\mathbb{R}^q)$, i.e.,
\begin{align*}
X_T:=B([0,T]; C^1(M,\mathbb{R}^q)),
\end{align*} 
\[\|u\|_{X_T}:=\max_{A=1,\ldots,q}\sup_{t\in[0,T]}\left( \|u^A(t,.)\|_{C^0(M)}+\|\nabla u^A(t,.)\|_{C^0(M)}\right).\]
We choose and fix an initial value for the mapping part $u_0\in C^{2+\alpha}(M,N)$ for some $0<\alpha<1$. Moreover, we define $v_0\in X_T$ by
\[v_0(t,x):=\int_Mp(x,y,t)u_0(y)\,dV(y)\]
where $p$ is the heat kernel of $M$ (see e.g. \cite{Chavel}) and denote by
\[B_R^T(v_0):=\{u\in X_T\text{ }|\text{ }\|u-v_0\|_{X_T}\le R\}\]
the closed ball with center $v_0$ and radius $R$ in $X_T$. Then we set
\begin{align*}
	(Lu)(t,x):=v_0(t,x)+\int_0^t\int_M p(x,y,t-\tau)\left(F_1(u_\tau)(y) +F_2(u_\tau,\psi(u_\tau))(y)\right) \,dV(y)d\tau
\end{align*}
Short time existence then follows from Banach's fixed point theorem after we have shown that $L$ is a contraction on $B_R^T(v_0)$ for $R$ and $T$ small enough. (Of course we have to show some additional things, e.g., that the fixed point takes values in $N$ and has the desired regularity.)

Recalling the strategy of the proof we outlined in the introduction, we first have to solve the constraint equation \eqref{dhmhf2}. (In fact, the $\psi(u)$ in the definition of $L$ will be the solution of the constraint equation.) As we mentioned, we will not transform \eqref{dhmhf2} to $\mathbb{R}^q$ and solve it there, we rather solve it directly in $N$ (in particular, the maps we consider have to be $N$-valued). At this point we run into a technical problem, since the elements of $B_R^T(v_0)$ are $\mathbb{R}^q$-valued. We remedy this by showing that for $R$ and $T$ small enough, every $u\in B_R^T(v_0)$ is $N_\delta$-valued. Hence $\pi\circ u$ is $N$-valued. Then we solve the constraint equation for $\pi\circ u$ instead of $u$ (i.e., we solve $\slashed{D}^{\pi\circ u}=0$ instead of $\slashed{D}^{u}=0$). This does not make a difference, since the fixed point $u_*$ will be $N$-valued, hence $\pi\circ u_*=u_*$.

We also explained in the introduction that to get the necessary estimates for the solution of equation \eqref{dhmhf2}, we will use a construction that joins $u_0(x)$ and $(\pi\circ u_t)(x)$ by a unique shortest geodesic of $N$. To do this, we need the next lemma which states that locally we can bound distances in $N$ by distances in $\mathbb{R}^q$.

\begin{lemma}\label{lemma abstandsvgl umf} Let $N\subset\mathbb{R}^q$ be a closed embedded submanifold of $\mathbb{R}^q$ with the induced Riemannian metric. Denote by $A$ its Weingarten map. Choose $C>0$ s.t. $\|A\|\le C$ where 
	\[\|A\|:=\sup\{\|A_vX\|\text{ }|\text{ }v\in T_p^\bot N, \text{ } X\in T_pN,\text{ }\|v\|=1,\text{ }\|X\|=1, \text{ }p\in N\}.\]
	Then there exists $0<\delta_0<\frac{1}{C}$ s.t. for all $0<\delta\le \delta_0$ and for all $p,q\in N$ with $\|p-q\|_2<\delta$ it holds that
	\[d^N(p,q)\le\frac{1}{1-\delta C}\|p-q\|_2,\]
	where $\|.\|_2$ denotes the Euclidean norm.
\end{lemma}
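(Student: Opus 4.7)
The natural idea is to join $p$ and $q$ by the curve in $N$ obtained by projecting the straight segment. Concretely, choose $\delta_0 > 0$ so small that (a) $N_{\delta_0}$ is a tubular neighborhood on which the nearest-point projection $\pi\colon N_{\delta_0} \to N$ is smooth, and (b) $\delta_0 < 1/C$. For $p,q \in N$ with $\|p-q\|_2 < \delta \le \delta_0$, let $\gamma(t) := (1-t)p + tq$; then for every $t \in [0,1]$ one has $d(\gamma(t),N) \le \min(t,1-t)\|p-q\|_2 \le \delta/2 < \delta_0$, so $\gamma$ stays inside $N_{\delta_0}$. Hence $c(t) := \pi(\gamma(t))$ is a $C^1$-curve in $N$ joining $p$ and $q$, and
\[
d^N(p,q) \;\le\; L(c) \;=\; \int_0^1 \|d\pi_{\gamma(t)}(q-p)\|\,dt \;\le\; \Bigl(\sup_{y\in N_\delta}\|d\pi_y\|_{\mathrm{op}}\Bigr)\,\|p-q\|_2.
\]
The lemma thus reduces to the pointwise estimate
\[
\|d\pi_y\|_{\mathrm{op}} \;\le\; \frac{1}{1 - C\,\|y-\pi(y)\|_2} \qquad \text{for all } y \in N_{\delta_0}. \tag{$\ast$}
\]

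\textbf{Proof of $(\ast)$.} Fix $y \in N_{\delta_0}$, set $x := \pi(y)$ and $\xi := y - x \in T_x^\perp N$. For $v \in \mathbb{R}^q$ arbitrary, consider the line $\alpha(s) := y + sv$ and the induced curve $x(s) := \pi(\alpha(s))$ in $N$ with $\dot x(0) = d\pi_y(v)$. The vector $\xi(s) := \alpha(s) - x(s)$ is a section of the normal bundle along $x(s)$, and the Weingarten formula for $N \subset \mathbb{R}^q$,
\[
\bar\nabla_X \xi \;=\; \nabla^\perp_X \xi \;-\; A_\xi X,
\]
applied at $s=0$ yields
\[
v \;=\; \dot x(0) + \dot\xi(0) \;=\; \bigl(I - A_\xi\bigr)\bigl(d\pi_y(v)\bigr) \;+\; \nabla^\perp_{d\pi_y(v)}\xi,
\]
where the first summand is tangential and the second normal at $x$. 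Taking tangential components gives
\[
d\pi_y(v) \;=\; (I - A_\xi)^{-1}\,v^T, \qquad v^T := pr_{T_xN}(v).
\]
Since $\|A_\xi\|_{\mathrm{op}} \le C\,\|\xi\|_2 \le C\delta_0 < 1$, the operator $I - A_\xi$ on $T_xN$ is invertible by Neumann series with $\|(I-A_\xi)^{-1}\|_{\mathrm{op}} \le 1/(1-C\|\xi\|_2)$, and $(\ast)$ follows because $\|v^T\| \le \|v\|$.

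\textbf{Conclusion and main obstacle.} Combining $(\ast)$ with the length estimate and the bound $\|\gamma(t) - \pi(\gamma(t))\|_2 \le \delta$ for $t \in [0,1]$ gives exactly $d^N(p,q) \le \|p-q\|_2/(1-\delta C)$. The only nontrivial step is the pointwise bound $(\ast)$; the key trick is to differentiate the tautological decomposition $\alpha(s) = \pi(\alpha(s)) + (\alpha(s) - \pi(\alpha(s)))$ and read off the tangential part via the Weingarten equation, which cleanly isolates the shape operator and explains the factor $1/(1-\delta C)$.
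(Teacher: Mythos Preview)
Your proof is correct, and it takes a genuinely different route from the one the paper indicates. The paper does not give a full proof but points to the Rauch Comparison Theorem for submanifolds \cite[Theorem 4.3 (b)]{Warner}: one compares the geodesic of $N$ joining $p$ and $q$ to a comparison situation with second fundamental form bounded by $C$, and reads off the desired length estimate from the Jacobi-field comparison. Your argument avoids any comparison geometry: you simply project the Euclidean segment onto $N$ via the nearest-point projection $\pi$ and bound the length of the projected curve by controlling $\|d\pi_y\|_{\mathrm{op}}$. The Weingarten identity lets you invert $I-A_\xi$ on $T_xN$ and produces exactly the factor $1/(1-\|\xi\|C)$, which integrates to the claimed bound. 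This is more elementary and entirely self-contained; it also yields the slightly sharper intermediate statement $\|d\pi_y\|_{\mathrm{op}}\le 1/(1-C\,d(y,N))$, which incidentally gives $d^N(p,q)\le \|p-q\|_2/(1-\tfrac{\delta}{2}C)$ since the segment stays within distance $\delta/2$ of $N$. The paper's route, on the other hand, plugs into a standard comparison theorem and needs no computation of $d\pi$.
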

The above lemma can be proven by e.g. using the Rauch Comparison Theorem for submanifolds \cite[Theorem 4.3. (b)]{Warner}. A detailed proof can be found in \cite{JWDissertation}.

In the following we will make some choices for the constants $\delta,R,$ and $T$ (e.g. to ensure the existence of unique shortest geodesics). At this point it is worth beeing very precise, since the constants will also depend on each other and we want to avoid any unclarity in future arguments. 

It is a standard fact that for every $R>0$ there exists $T=T(R)>0$ s.t.
\begin{align}\label{eq z11}
\|v_0(t,.)-u_0\|_{C^0(M,\mathbb{R}^q)}<R
\end{align}
for all $t\in [0,T]$.

If $R<\frac{\delta}{2}$ and $T=T(R)$ is chosen s.t. \eqref{eq z11} holds, then it holds for every $u\in B_R^T(v_0)\cap\{u|_{t=0}=u_0\}$ that
\[u(t,x)\in N_\delta\]
for all $(t,x)\in[0,T]\times M$. (In particular $\pi\circ u$ is $N$-valued.)

For all $u\in B_R^T(v_0)\cap\{u|_{t=0}=u_0\}$ it holds that
\begin{align}\label{eq z1}
\|(\pi\circ u)(t,x)-u_0(x)\|_2\le 2 \delta
\end{align}	
by the triangle inequality. Now we choose $\varepsilon >0$ with $2\varepsilon <\textup{inj}(N)$. Moreover, let $C>0$ and $\delta_0>0$ be chosen as in Lemma \ref{lemma abstandsvgl umf} and assume
\[\delta<\min\{\frac14\delta_0, \frac{1}{4}\varepsilon (1-\delta_0C) \}.\]
Using equation \eqref{eq z1} we see that for all $u,v\in B_R^T(v_0)\cap\{u|_{t=0}=u_0\}$ it holds that
\begin{align*}
\|(\pi\circ u)(t,x)-(\pi\circ v)(s,x)\|_2&<4\delta<\delta_0
\end{align*}
Therefore, Lemma \ref{lemma abstandsvgl umf} and the choice of $\delta$ yield
\begin{align}\label{eq z4}
d^N((\pi\circ u)(t,x),(\pi\circ v)(s,x))&\le \frac{1}{1-\delta_0C} \|(\pi\circ u)(t,x)-(\pi\circ v)(s,x)\|_2\\
&<\frac{1}{1-\delta_0C}4\delta\nonumber \\
&=\varepsilon\nonumber\\
&<\frac12 \textup{inj}(N).\nonumber
\end{align}
(In particular, we can connect $(\pi\circ u)(t,x)$ and $(\pi\circ v)(s,x)$ by a unique shortest geodesic of $N$.) To summarize, we have chosen constants as follows:\FloatBarrier
\begin{table}[h]
	\begin{center}
		\begin{tabular}{|ll|}
			\hline
			$\varepsilon>0$ &s.t. $2\varepsilon<\textup{inj}(N)$,\\
			$\delta=\delta(\varepsilon)>0$& s.t. $\delta<\min\{\frac14\delta_0, \frac{1}{4}\varepsilon (1-\delta_0C) \}$,\\
			$R=R(\delta,\varepsilon)>0$ & s.t. $R<\frac{\delta(\varepsilon)}{2}$,\\
			$T=T(\delta,\varepsilon, R)>0$ & s.t. \eqref{eq z11} holds\\
			\hline
		\end{tabular}
	\end{center}
	\caption{Choices of constants.}
	\label{tabelle konstanten}
\end{table}\FloatBarrier
where $\delta_0,C>0$ are as in Lemma \ref{lemma abstandsvgl umf}. We have shown that these choices imply 
\[u(t,x)\in N_\delta\]
and
\begin{align}\label{eq u v durch geod verbindbar}
d^N((\pi\circ u)(t,x),(\pi\circ v)(s,x))<\varepsilon<\frac12 \textup{inj}(N)
\end{align}
for all $u,v\in B_R^T(v_0)\cap\{u|_{t=0}=u_0\}$, $x\in M$, $t,s\in[0,T]$. 

In the following, constants appearing in inequalities might depend on $M$, $N$, and $u_0$, but we suppress this dependency in the notation since we view $M$, $N$, and $u_0$ as part of our fixed initial data.

\newpage
\section{The constraint equation}\label{section constraint}
In this section we solve the constraint equation with the strategy outlined in the introduction. Until Section \ref{section kernel projection} we have no restrictions on the dimension of $M$.

Let $u,v\in B_R^T(v_0)\cap\{u|_{t=0}=u_0\}$ and assume that the constants are chosen as in Table \ref{tabelle konstanten}. In the following we denote by $P^{v_s,u_t}=P^{v_s,u_t}(x)$ the parallel transport of $N$ along the unique\footnote{parametrized on $[0,1]$} shortest geodesic from $\pi(v(s,x))$ to $\pi(u(t,x))$. We also denote by $P^{v_s,u_t}$ the induced mappings 
\[(\pi\circ v_s)^*TN\to (\pi\circ u_t)^*TN,\] \[\Sigma M \otimes(\pi\circ v_s)^*TN\rightarrow \Sigma M\otimes(\pi\circ u_t)^*TN,\]and \[\Gamma_{C^1}(\Sigma M \otimes(\pi\circ v_s)^*TN)\rightarrow \Gamma_{C^1}(\Sigma M\otimes(\pi\circ u_t)^*TN).\]

\subsection{Estimates for Dirac operators along maps}\label{section estimates}
As mentioned in the introduction, we will use estimates for Dirac operators along maps to get estimates for the projection onto the kernels of such operators. 

\begin{lemma}\label{lemma Kruemmungsargument} Choose $\varepsilon, \delta, R,$ and $T$ as in Table \ref{tabelle konstanten}. If $\varepsilon>0$ is small enough, then there exists $C=C(R)>0$ s.t.
	\begin{align*}
	\|\left(\big({P^{v_s,u_t}}\big)^{-1}\slashed{D}^{\pi\circ u_t}P^{v_s,u_t}-\slashed{D}^{\pi\circ v_s}\right)\psi(x)\|\le C \|u_t-v_s\|_{C^0(M,\mathbb{R}^q)} \|\psi(x)\|
	\end{align*}
	for all $u,v\in B_R^T(v_0)\cap\{u|_{t=0}=u_0\}$, $\psi\in \Gamma_{C^1}(\Sigma M \otimes (\pi\circ v_s)^*TN)$, $x\in M$, $t,s\in [0,T]$.
\end{lemma}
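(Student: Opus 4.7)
The idea is to reuse the zeroth-order decomposition of $P^{-1}\slashed{D}^f P - \slashed{D}^g$ from the proof of Lemma \ref{lemma ellipticregnonsmooth}, with $f := \pi\circ u_t$, $g := \pi\circ v_s$, $P := P^{v_s,u_t}$, and then bound the pointwise size of this zeroth-order coefficient by $\|u_t-v_s\|_{C^0(M,\mathbb{R}^q)}$ through a curvature-of-$N$ argument along the shortest geodesics that define $P^{v_s,u_t}$.

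\textbf{Step 1: Reduction to a connection-form estimate.} In a local frame $(s_j)$ of $(\pi\circ v_s)^*TN$ and $(\psi^i)$ of $\Sigma M$, the computation from Lemma \ref{lemma ellipticregnonsmooth} gives
\[
\bigl(P^{-1}\slashed{D}^{\pi\circ u_t} P - \slashed{D}^{\pi\circ v_s}\bigr)(\lambda^j_i\psi^i\otimes s_j)
=\lambda^j_i\bigl(\tilde\omega^l_j(e_\alpha)-\omega^l_j(e_\alpha)\bigr)\, e_\alpha\cdot(\psi^i\otimes s_l),
\]
where $\omega,\tilde\omega$ are the connection forms of $\nabla:=\nabla^{(\pi\circ v_s)^*TN}$ and $\tilde\nabla:=P^{-1}\nabla^{(\pi\circ u_t)^*TN}P$ in the frame $(s_j)$. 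Since Clifford multiplication by a unit tangent vector is an isometry, it suffices to show that for every $C^1$-section $Y$ of $(\pi\circ v_s)^*TN$ and every $x\in M$,
\[
\|(\tilde\nabla_{e_\alpha}-\nabla_{e_\alpha})Y(x)\|\le C(R)\,\|u_t-v_s\|_{C^0(M,\mathbb{R}^q)}\,\|Y(x)\|.
\]

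\textbf{Step 2: Geodesic-variation identity.} Using \eqref{eq u v durch geod verbindbar}, define the $C^1$-map
\[
\Phi\colon[0,1]\times M\to N,\qquad \Phi(r,x):=\gamma_x(r),
\]
where $\gamma_x$ is the unique shortest geodesic from $\pi(v_s(x))$ to $\pi(u_t(x))$. Given $Y$, let $\tilde Y\in\Gamma(\Phi^*TN)$ be obtained by parallel transport along the $r$-lines, so $\nabla^{\Phi^*TN}_{\partial_r}\tilde Y\equiv 0$, $\tilde Y(0,\cdot)=Y$, $\tilde Y(1,\cdot)=PY$. Because $[\partial_r,e_\alpha]=0$,
\[
\nabla^{\Phi^*TN}_{\partial_r}\nabla^{\Phi^*TN}_{e_\alpha}\tilde Y
=R^N\!\bigl(d\Phi(\partial_r),\,d\Phi(e_\alpha)\bigr)\tilde Y,
\]
and integrating over $r\in[0,1]$ and applying $P_r^{-1}$ (parallel transport along $\gamma_x$ back to $r=0$) yields
\[
(\tilde\nabla_{e_\alpha}-\nabla_{e_\alpha})Y(x)
=\int_0^1 P_r^{-1}R^N\!\bigl(\dot\gamma_x(r),\,d\Phi(e_\alpha)(r,x)\bigr)\tilde Y(r,x)\,dr.
\]

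\textbf{Step 3: Bounds and assembly.} Since $P_r$ is a linear isometry, $\|\tilde Y(r,x)\|=\|Y(x)\|$. The speed $|\dot\gamma_x(r)|$ equals $d^N(\pi(v_s(x)),\pi(u_t(x)))$, which by Lemma \ref{lemma abstandsvgl umf} and the constants from Table \ref{tabelle konstanten} is bounded by $(1-\delta_0 C)^{-1}\|u_t-v_s\|_{C^0(M,\mathbb{R}^q)}$. The variation field $r\mapsto d\Phi(e_\alpha)(r,x)$ is a Jacobi field along $\gamma_x$ with boundary values $d(\pi\circ v_s)(e_\alpha)$ at $r=0$ and $d(\pi\circ u_t)(e_\alpha)$ at $r=1$; these endpoint vectors are uniformly bounded in terms of $\|d\pi\|_{C^0(N_\delta)}$ and $\|u\|_{X_T},\|v\|_{X_T}\le\|v_0\|_{X_T}+R$. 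Combined with the uniform bound $\|R^N\|_{C^0(N)}<\infty$ and the uniform boundedness of Jacobi fields over geodesics of length $<\varepsilon$, this gives the pointwise bound of Step 1, which proves the lemma.

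\textbf{Main obstacle.} The only nontrivial point is the uniform control of the Jacobi field $d\Phi(e_\alpha)(r,x)$ for $r\in[0,1]$, independent of $u,v,s,t,x$; this is exactly where the hypothesis \emph{``if $\varepsilon>0$ is small enough''} enters, via the standard Jacobi comparison on geodesics of length $<\varepsilon$, ensuring the field does not grow by more than a fixed multiplicative factor from its endpoint data.
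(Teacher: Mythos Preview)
Your proof is correct and essentially identical to the paper's: both reduce to the zeroth-order connection difference $\tilde\nabla-\nabla$, extend sections by parallel transport along the geodesic interpolation $\Phi$ (the paper's $F$), apply the curvature identity $\nabla_{\partial_r}\nabla_{e_\alpha}\tilde Y=R^N(d\Phi(\partial_r),d\Phi(e_\alpha))\tilde Y$, and then bound $\|d\Phi(\partial_r)\|$ by $d^N(g(x),f(x))$ and $\|d\Phi(e_\alpha)\|$ by a constant $C(R)$. Your explicit identification of $d\Phi(e_\alpha)$ as a Jacobi field with endpoint data controlled by $\|v_0\|_{X_T}+R$ is precisely what the paper summarizes as ``not hard to show, but a bit tedious''; the only minor omission is that passing from $d^N(\pi(v_s(x)),\pi(u_t(x)))$ to $\|u_t-v_s\|_{C^0(M,\mathbb{R}^q)}$ also uses the global Lipschitz continuity of $\pi$, as the paper notes.
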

We formulated the lemma in exactly the way we are going to use it later. However it is obvious from the proof that the assertion of the lemma holds in more general contexts (e.g. for arbitrary maps $f,g\in C^1(M,N)$ that are close enough in $C^0(M,N)$), provided the factors on the right hand side of the inequality are suitably adjusted (e.g. by $C(\|f\|_{C^1},\|g\|_{C^1})\sup_{y\in M}d^N(f(y),g(y))\|\psi(x)\|$).

In the same way, most of the lemmas shown in Section \ref{section constraint} hold in more general situations with essentially the same proofs.

\begin{proof}[Proof of Lemma \ref{lemma Kruemmungsargument}] We write $g:= \pi\circ v_s$, $f:=\pi\circ u_t$, and we define the $C^1$-mapping
	\[F\colon [0,1]\times M \to N\]
	by $F(t,x):=\textup{exp}_{g(x)}(t\textup{exp}_{g(x)}^{-1}f(x))$ where $\textup{exp}$ denotes the exponential map of the Riemannian manifold $N$. Note that $F(0,.)=g$, $F(1,.)=f$, and $t\mapsto F(t,x)$ is the unique shortest geodesic from $g(x)$ to $f(x)$.	We denote by
	\[\mathcal{P}_{t_1,t_2}=\mathcal{P}_{t_1,t_2}(x)\colon T_{F(t_1,x)}N \rightarrow T_{F(t_2,x)}N \]
	the parallel transport in $F^*TN$ w.r.t. $\nabla^{F^*TN}$ (pullback of the Levi-Civita connection on $N$) along the curve $\gamma_x(t):=(t,x)$ from $\gamma_x(t_1)$ to $\gamma_x(t_2)$, $x\in M$, $t_1,t_2\in[0,1]$. In particular,
	\[\mathcal{P}_{0,1}=P^{v_s,u_t}.\]
	Let $\psi\in \Gamma_{C^1}(\Sigma M \otimes g^*TN)$. We have
	\begin{align}\label{eq z2}
	\begin{split}
	&\left(\big(\mathcal{P}_{0,1}\big)^{-1}\slashed{D}^{f}\mathcal{P}_{0,1}-\slashed{D}^{g}\right)\psi\\
	&=(e_\alpha\cdot\psi^i)\otimes\left(\left(\left(\big(\mathcal{P}_{0,1}\big)^{-1}\nabla_{e_\alpha}^{f^*TN}\mathcal{P}_{0,1}\right)-\nabla_{e_\alpha}^{g^*TN}\right)(b_i\circ g)\right)
	\end{split}
	\end{align}
	where $\psi=\psi^i\otimes (b_i\circ g)$, $(b_i)$ is a (smooth) local orthonormal frame of $TN$, $\psi^i$ are local $C^1$-sections of $\Sigma M$, and $(e_\alpha)$ is a (smooth) local orthonormal frame of $TM$. 
	
	Hence, roughly we want to control the difference ``$\nabla_{e_\alpha}^{f^*TN}-\nabla_{e_\alpha}^{g^*TN}$''. The idea to achieve this is to use the fundamental theorem of calculus to get
	\[\text{ }\nabla_{e_\alpha}^{f^*TN}-\nabla_{e_\alpha}^{g^*TN}\approx \int_0^1\nabla_{\frac{\partial}{\partial t}}^{F^*TN}\nabla_{e_\alpha}^{F^*TN}\approx \int_{0}^{1}R^{TN}(dF(\frac{\partial}{\partial t}), dF(e_\alpha))\]
	(of course, this equation does not make sense, it should just sketch the idea of the proof), then use the tensoriality of the curvature tensor and estimate $dF$.
	 
	To that end, we define local $C^1$-sections $\Theta_i$ of $F^*TN$ by 
	\[\Theta_i(t,x):=\mathcal{P}_{0,t}(x)(b_i\circ g)(x).\]
	For each $t\in[0,1]$ we define the functions $T_{ij}(t,.)=T_{ij}^\alpha(t,.)$ by
	\begin{align}\label{eq z3}
	\big(\mathcal{P}_{0,t}\big)^{-1}\big((\nabla_{e_\alpha}^{F^*TN}\Theta_i)(t,x)\big)=\sum_jT_{ij}(t,x)(b_j\circ g)(x).
	\end{align}
	A priori we only know that the $T_{ij}$ are continuous. In the following we will do a few formal calculations and justify them afterwards.
	It holds that
	\begin{align}\label{eq z5}
	\begin{split}
	&\|\left(\left(\big(\mathcal{P}_{0,1}\big)^{-1}\nabla_{e_\alpha}^{f^*TN}\mathcal{P}_{0,1}\right)-\nabla_{e_\alpha}^{g^*TN}\right)(b_i\circ g)(x)\|^2\\
	&=\|\big(\mathcal{P}_{0,1}\big)^{-1}\big((\nabla_{e_\alpha}^{F^*TN}\Theta_i)(1,x)\big)-\big(\mathcal{P}_{0,0}\big)^{-1}\big((\nabla_{e_\alpha}^{F^*TN}\Theta_i)(0,x)\big)\|^2\\
	&=\|\Big(\sum_jT_{ij}(1,x)(b_j\circ g)(x)\Big)-\Big(\sum_jT_{ij}(0,x)(b_j\circ g)(x)\Big)\|^2\\
	&=\sum_j (T_{ij}(1,x)-T_{ij}(0,x))^2\\
	&=\sum_j\left(\int_{0}^{1}\frac{d}{dt}\Big|_{t=r}T_{ij}(t,x)dr\right)^2.
	\end{split}
	\end{align}
	Therefore we want to control the first time-derivative of the $T_{ij}$. Equation \eqref{eq z3} implies that these time-derivatives are related to the curvature of $F^*TN$. More precisely, for all $X\in\Gamma(TM)$ we have
	\begin{align}\label{eq z46}\begin{split}
	&\frac{d}{dt}\Big|_{t=r}\left(\mathcal{P}_{0,t}^{-1}\left(\left(\nabla_X^{F^*TN}\Theta_i\right)(t,x)\right)\right)\\
	&=\frac{d}{dt}\Big|_{t=0}\left(\mathcal{P}_{0,t+r}^{-1}\left(\left(\nabla_X^{F^*TN}\Theta_i\right)(t+r,x)\right)\right)\\
	&=\mathcal{P}_{0,r}^{-1}\frac{d}{dt}\Big|_{t=0}\left(\mathcal{P}_{r,t+r}^{-1}\left(\left(\nabla_X^{F^*TN}\Theta_i\right)(t+r,x)\right)\right)\\
	&=\mathcal{P}_{0,r}^{-1}\left(\left(\nabla_{\frac{\partial}{\partial t}}^{F^*TN}\nabla_X^{F^*TN}\Theta_i\right)(r,x)\right).\end{split}
	\end{align}
	Now we justify the formal calculations \eqref{eq z5} and \eqref{eq z46}. Combining the definition of $\Theta_i$ as parallel transport and a careful examination of the regularity of $F$ we deduce that $\left(\nabla_{\frac{\partial}{\partial t}}^{F^*TN}\nabla_X^{F^*TN}\Theta_i\right)(r,x)$ exists (in the sense that the expression is well-defined in local coordinates). Then \eqref{eq z46} holds. In particular $\mathcal{P}_{0,t}^{-1}\left(\left(\nabla_X^{F^*TN}\Theta_i\right)(t,x)\right)$ is differentiable in $t$. Then \eqref{eq z3} yields that the $T_{ij}$ are differentiable in $t$. Therefore \eqref{eq z5} holds. 
	
	We further get
	\begin{align*}
	\nabla_{\frac{\partial}{\partial t}}^{F^*TN}\nabla_X^{F^*TN}\Theta_i&=R^{F^*TN}(\frac{\partial}{\partial t},X)\Theta_i+\nabla_X^{F^*TN}\nabla_{\frac{\partial}{\partial t}}^{F^*TN}\Theta_i-\nabla^{F^*TN}_{[\frac{\partial}{\partial t},X]}\Theta_i\\
	&=R^{F^*TN}(\frac{\partial}{\partial t},X)\Theta_i\\
	&=R^{TN}(dF(\frac{\partial}{\partial t}), dF(X))\Theta_i,
	\end{align*}
	since $\nabla_{\frac{\partial}{\partial t}}^{F^*TN}\Theta_i=0$ by definition of $\Theta_i$, and $[\frac{\partial}{\partial t},X]=0$.\footnote{For this chain of equations one has to be a little careful, since we argue with the curvature of $F^*TN$, but $F$ is only a $C^1$-mapping. However, all the expressions exist (e.g. in the sense that the exist in local coordinates) and all the equalities hold. }
	
	This implies
	\begin{align*}
	\sum_j \left(\frac{d}{dt}\Big|_{t=r}T_{ij}(t,x)\right)^2&=\|\frac{d}{dt}\Big|_{t=r}\left(\mathcal{P}_{0,t}^{-1}\left(\left(\nabla_{e_\alpha}^{F^*TN}\Theta_i\right)(t,x)\right)\right)\|^2\\
	&=\|\left(\nabla_{\frac{\partial}{\partial t}}^{F^*TN}\nabla_{e_\alpha}^{F^*TN}\Theta_i\right)(r,x)\|^2\\
	&=\|R^{TN}(dF_{(r,x)}(\frac{\partial}{\partial t}), dF_{(r,x)}(e_\alpha))\Theta_i(r,x)\|^2\\
	&\le C_1 \|dF_{(r,x)}(\frac{\partial}{\partial t})\|^2\|dF_{(r,x)}(e_\alpha)\|^2
	\end{align*}
	where $C_1$ depends only on $N$.
	
	Therefore it remains to estimate $\|dF_{(r,x)}(\frac{\partial}{\partial t})\|$ and $\|dF_{(r,x)}(e_\alpha)\|$ appropriately. We have
	\begin{align*}
	dF_{(r,x)}(\frac{\partial}{\partial t}\Big|_{(r,x)})=\frac{d}{dt}\Big|_{t=r}(\textup{exp}_{g(x)}(t\textup{exp}_{g(x)}^{-1}f(x))=c'(r)
	\end{align*}
	where $c(t):=\textup{exp}_{g(x)}(t \textup{exp}_{g(x)}^{-1}f(x))$ is a geodesic of $N$. In particular $c'$ is parallel along $c$ and thus $\|c'(r)\|=\|c'(0)\|=\|\textup{exp}_{g(x)}^{-1}f(x)\|$. Therefore we get
	\begin{align*}
	\|dF_{(r,x)}(\frac{\partial}{\partial t}\Big|_{(r,x)})\|=\|\textup{exp}_{g(x)}^{-1}f(x)\|\le d^N(g(x),f(x))\le C_2\|u_t-v_s\|_{C^0(M,\mathbb{R}^q)}
	\end{align*}
	where we used \eqref{eq z4} and the (global) Lipschitz continuity of $\pi$. Moreover, there exists some $C_3(R)>0$ s.t. $\|dF_{(r,x)}(e_\alpha)\|\le C_3(R)$ for all $(r,x)\in [0,1]\times M$. (This is not hard to show, but a bit tedious.) We have shown
	\begin{align*}
	\sum_j \left(\frac{d}{dt}\Big|_{t=r}T_{ij}(t,x)\right)^2\le C_1 C_2^2 C_3(R)^2 \|f-g\|_{C^0(M,\mathbb{R}^q)}^2=C_4(R)\|f-g\|_{C^0(M,\mathbb{R}^q)}^2
	\end{align*}
	for all $(t,x)$. Combining this with \eqref{eq z2} and \eqref{eq z5} yields the lemma.
\end{proof}

\newpage
\subsection{Estimates for the parallel transports}
In this section we obtain estimates for the parallel transports which will be used later.

\begin{lemma}\label{lemma parallel transport deviation} Choose $\varepsilon, \delta, R,$ and $T$ as in Table \ref{tabelle konstanten}. If $\varepsilon >0$ is small enough, then there exists $C=C(\varepsilon)>0$ s.t.
	\[\|P^{v_s,u_0}P^{u_t,v_s}P^{u_0,u_t}Z-Z\|\le C\|u_t-v_s\|_{C^0(M,\mathbb{R}^q)} \|Z\|\]
	for all $Z\in T_{u_0(x)}N$, $x\in M$, $s,t\in[0,T]$, $u,v\in B_R^T(v_0)\cap\{u|_{t=0}=u_0\}$.
\end{lemma}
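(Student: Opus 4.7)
The plan is to recognise $P^{v_s,u_0}P^{u_t,v_s}P^{u_0,u_t}$ as the parallel-transport holonomy around the geodesic triangle in $N$ with vertices
\[
P_0 := u_0(x),\qquad P_1 := \pi(u_t(x)),\qquad P_2 := \pi(v_s(x)),
\]
and to bound it by spanning the triangle with a $C^\infty$ surface and integrating the curvature tensor of $N$ over it. By \eqref{eq u v durch geod verbindbar} and the choices in Table~\ref{tabelle konstanten}, any two of these three vertices lie within distance $\varepsilon < \tfrac12\,\textup{inj}(N)$ of each other, so the triangle is well defined and everything below happens in a smooth portion of $N$.

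The delicate point is the choice of parametrization of the spanning surface, because it must single out a \emph{single} factor of $\|u_t-v_s\|_{C^0}$ rather than only a factor of $\varepsilon$. I would let $c\colon[0,1]\to N$ be the constant-speed shortest geodesic from $P_1$ to $P_2$ and define
\[
G(\sigma,\tau) := \exp_{P_0}\!\bigl(\tau\,\exp_{P_0}^{-1}(c(\sigma))\bigr),\qquad (\sigma,\tau)\in[0,1]^2.
\]
Then $G(\sigma,0)\equiv P_0$, $G(\sigma,1)=c(\sigma)$, and for each fixed $\sigma$ the curve $\tau\mapsto G(\sigma,\tau)$ is the unique shortest geodesic from $P_0$ to $c(\sigma)$. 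For $Z\in T_{P_0}N$, let $T(\sigma)\colon T_{P_0}N\to T_{c(\sigma)}N$ denote the parallel transport along this geodesic and $S(\sigma)\colon T_{P_1}N\to T_{c(\sigma)}N$ the parallel transport along $c|_{[0,\sigma]}$; set
\[
Q(\sigma) := T(\sigma)^{-1}S(\sigma)T(0)Z \in T_{P_0}N.
\]
Using $T(0)=P^{u_0,u_t}$, $T(1)=P^{u_0,v_s}$, $S(0)=\mathrm{id}$, and $S(1)=P^{u_t,v_s}$, one checks $Q(0)=Z$ and $Q(1)=P^{v_s,u_0}P^{u_t,v_s}P^{u_0,u_t}Z$, so it suffices to estimate $\int_0^1\|Q'(\sigma)\|\,d\sigma$.

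Next, by the same $\nabla_{\partial_\sigma}\nabla_{\partial_\tau}-\nabla_{\partial_\tau}\nabla_{\partial_\sigma}=R^{G^*TN}$ computation that underlies the proof of Lemma~\ref{lemma Kruemmungsargument}, I would express $Q'(\sigma)$ as a $\tau$-integral of $R^{TN}(\partial_\sigma G,\partial_\tau G)$ applied to a suitable parallel transport of $Z$, and then bound the two variation fields separately. Since $\tau\mapsto G(\sigma,\tau)$ is a geodesic, $\|\partial_\tau G\|$ equals the constant $d^N(P_0,c(\sigma))\le 2\varepsilon$. The field $\partial_\sigma G(\sigma,\cdot)$ is the Jacobi field along that geodesic with $\partial_\sigma G(\sigma,0)=0$ and $\partial_\sigma G(\sigma,1)=c'(\sigma)$; once $\varepsilon$ is less than the focal radius of $N$, a standard Jacobi-field comparison yields $\|\partial_\sigma G(\sigma,\tau)\|\le C_1\|c'(\sigma)\|$ uniformly in $\tau$. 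Combined with
\[
\|c'(\sigma)\|=d^N(P_1,P_2)\le C_2\|u_t-v_s\|_{C^0(M,\mathbb{R}^q)}
\]
(via the Lipschitz continuity of $\pi$ and Lemma~\ref{lemma abstandsvgl umf}) and the boundedness of $R^{TN}$ on the closed manifold $N$, this gives $\|Q'(\sigma)\|\le C(\varepsilon)\,\|u_t-v_s\|_{C^0(M,\mathbb{R}^q)}\,\|Z\|$ and hence the stated inequality.

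The main obstacle is precisely this choice of parametrization. A naive spanning surface in which both parameters sweep $\varepsilon$-scale edges — for instance, varying along the edge $P_0\to P_1$ in one parameter and fan-sweeping geodesics toward $P_2$ in the other — would only produce a bound of order $\varepsilon^2$ and carry no $\|u_t-v_s\|$ factor at all, which would be useless for the contraction argument downstream. Pinning one boundary of the surface to the short side $c$ from $P_1$ to $P_2$ forces the transverse Jacobi field to have boundary data of size $\|c'\|\sim\|u_t-v_s\|$, and that is what produces the desired linear dependence. The remaining steps are routine and live entirely in the smooth ambient manifold $N$, with the low regularity of $u$ and $v$ entering only through the $x$-dependence of the vertices $P_0,P_1,P_2$.
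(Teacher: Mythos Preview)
Your proposal is correct and follows essentially the same approach as the paper: the spanning surface $G(\sigma,\tau)=\exp_{P_0}(\tau\exp_{P_0}^{-1}c(\sigma))$ is precisely the paper's variation $\alpha$, the holonomy is reduced to a curvature integral via $\nabla_{\partial_\sigma}\nabla_{\partial_\tau}-\nabla_{\partial_\tau}\nabla_{\partial_\sigma}=R^{TN}$, and the two variation fields are bounded in the same way (with the Jacobi-field comparison $\|\partial_\sigma G(\sigma,\tau)\|\le C\|c'(\sigma)\|$ producing the factor $\|u_t-v_s\|_{C^0}$). The only cosmetic difference is that the paper works in components with respect to a parallel orthonormal frame $E_i(s,t)$ and writes the double integral \eqref{eq z8} directly, whereas you package the same computation as $\int_0^1 Q'(\sigma)\,d\sigma$; your remark about why the parametrization must pin one boundary to the short edge $c$ is a useful observation that the paper leaves implicit.
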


\begin{proof}We fix $x,s,t,u,v$ and write $y:=(\pi\circ v_s)(x)$, $z:=(\pi \circ u_t)(x)$. Moreover, we denote by $\gamma_i\colon [0,1]\to N$ the unique shortest geodesics of $N$ with
	\[\gamma_1(0)=\gamma_3(1)=u_0(x), \text{ }\gamma_1(1)=\gamma_2(0)=z, \text{ }\gamma_2(1)=\gamma_3(0)=y.\]
	Furthermore, we define $c:=\gamma_3*\gamma_2*\gamma_1$, i.e., $c$ is the curve obtained by first following $\gamma_1$, then $\gamma_2$, and then $\gamma_3$. Finally, we write $P^c$ for the induced parallel transport of $N$ along $c$. (Hence $P^c=P^{v_s,u_0}P^{u_t,v_s}P^{u_0,u_t}$.)
	
	We consider the (well-defined) geodesic variation
	\begin{align*}
	\alpha\colon [0,1]\times[0,1]\rightarrow N,\hspace{3em}(s,t)\mapsto \textup{exp}_{u_0(x)}(t\textup{exp}_{u_0(x)}^{-1}\gamma_2(s)),
	\end{align*}
	where $\textup{exp}$ is the exponential map of $N$.
	
	We choose an arbitrary $Z\in T_{u_0(x)}N$. In the following we derive a formula that relates $P^{c}Z-Z$ to an integral over $R^{TN}(\frac{\partial}{\partial s}\alpha(s,t),\frac{\partial}{\partial t}\alpha(s,t))$ with a strategy inspired by \cite[Section 7]{Yang}. This formula is closely related to the general fact that  ``deviation of parallel transport from the identity $\approx$ curvature $\cdot$ enclosed area''.
	
	Denote by $t\mapsto Z(t)$ the parallel vector field along $\gamma_1$ with $Z(0)=Z$. For every $t\in[0,1]$ let $s\mapsto Z(s,t)$ be the parallel vector field along $s\mapsto \alpha(s,t)$ with $Z(0,t)=Z(t)$. In particular we have
	\[P^{\gamma_2*\gamma_1}Z=Z(1,1).\]
	
	Let $(E_0,\ldots,E_n)$ be an orthonormal basis of $T_{u_0(x)}N$. Analogously, we construct $E_i(s,t)\in T_{\alpha(s,t)}N$ s.t. $E_i(0,0)=E_i$, $t\mapsto E_i(1,t)$ is parallel along $t\mapsto \alpha(1,t)$, and $s\mapsto E_i(s,t)$ is parallel along $s\mapsto \alpha(s,t)$ for every $t\in[0,1]$.
	
	We write $Z(s,t)=Z^i(s,t)E_i(s,t)$, i.e., $Z^i(s,t)=\langle Z(s,t), E_i(s,t)\rangle$ (here, $\langle.,.\rangle$ denotes the Riemannian metric on $N$). It holds that
	\begin{align*}
	\frac{d}{dt}\Big|_{t=t_0} Z^i(1,t)=\frac{d}{dt}\Big|_{t=t_0}\langle Z(1,t), E_i(1,t)\rangle=\langle \frac{D}{dt}\Big|_{t=t_0}Z(1,t), E_i(1,t_0)\rangle
	\end{align*}
	and
	\begin{align*}
	&\frac{d}{ds}\Big|_{s=s_0} \langle \frac{D}{dt}\Big|_{t=t_0}Z(s,t), E_i(s,t_0)\rangle\\
	&=\langle\frac{D}{ds}\Big|_{s=s_0} \frac{D}{dt}\Big|_{t=t_0}Z(s,t), E_i(s_0,t_0)\rangle\\
	&=\langle R^{TN}\Big(\frac{\partial}{\partial s}\Big|_{s=s_0}\alpha(s,t_0), \frac{\partial}{\partial t}\Big|_{t=t_0}\alpha(s_0,t)\Big)Z(s_0,t_0)\\
	&\hphantom{=}+ \frac{D}{dt}\Big|_{t=t_0}\frac{D}{ds}\Big|_{s=s_0}Z(s,t), E_i(s_0,t_0)\rangle\\
	&=\langle R^{TN}\Big(\frac{\partial}{\partial s}\Big|_{s=s_0}\alpha(s,t_0), \frac{\partial}{\partial t}\Big|_{t=t_0}\alpha(s_0,t)\Big)Z(s_0,t_0), E_i(s_0,t_0)\rangle.
	\end{align*}
	Noting that $Z=Z(0,0)=Z(s,0)$ and $E_i=E_i(0,0)=E_i(s,0)$ for all $s\in[0,1]$ (since $s\mapsto\alpha(s,0)$ is constant) we get
	
	\begin{align*}
	P^{c}Z-Z&=P^{\gamma_3}\left(P^{\gamma_2*\gamma_1}Z\right)-Z\\
	&=P^{\gamma_3}\left(Z^i(1,1)E_i(1,1)\right)-Z^i(1,0)E_i(1,0)\\
	&=Z^i(1,1)E_i(1,0)-Z^i(1,0)E_i(1,0)\\
	&=\int_{0}^{1}\langle \frac{D}{dt}Z(1,t), E_i(1,t)\rangle dt E_i\\
	&=\int_{0}^{1}\Big(\langle \frac{D}{dt}Z(1,t), E_i(1,t)\rangle-\langle \underbrace{\frac{D}{dt}Z(0,t)}_{=0}, E_i(0,t)\rangle\Big)dt E_i\\
	&=\int_{0}^{1}\int_{0}^{1}\frac{d}{ds}\langle \frac{D}{dt}Z(s,t), E_i(s,t)\rangle dtdsE_i\\
	&=\int_{0}^{1}\int_{0}^{1}\langle R^{TN}\Big(\frac{\partial}{\partial s}\alpha(s,t), \frac{\partial}{\partial t}\alpha(s,t)\Big)Z(s,t), E_i(s,t)\rangle dtdsE_i.
	\end{align*}
	We have shown that
	\begin{align}\label{eq z8}
	\begin{split}
	P^{c}Z-Z=\Bigg(\int_{0}^{1}\int_{0}^{1}\langle R^{TN}\Big(\frac{\partial}{\partial s}\alpha(s,t), \frac{\partial}{\partial t}\alpha(s,t)\Big)Z(s,t), E_i(s,t)\rangle dtds\Bigg)E_i
	\end{split}
	\end{align} 
	holds for all $Z\in T_{u_0(x)}N$. In the next step we estimate $\|\frac{\partial}{\partial t}\alpha\|$ and $\|\frac{\partial}{\partial s}\alpha\|$. To that end, notice that
	\begin{align*}
	\|\frac{\partial}{\partial t}\alpha(s,t)\|=\|\textup{exp}^{-1}_{u_0(x)}\gamma_2(s)\|\le d^N(u_0(x),\gamma_2(s))<2\varepsilon.
	\end{align*} 
	Therefore it remains to estimate $\|\frac{\partial}{\partial s}\alpha\|$.  For each $s\in[0,1]$ we consider the Jacobi field \[J_s(t):=\frac{\partial}{\partial s}\alpha(s,t).\]
	Equation \eqref{eq z4} and the (global) Lipschitz continuity of $\pi\colon\mathbb{R}^q\rightarrow\mathbb{R}^q$ yield
	\begin{align}\label{eq z9}
	\|J_s(1)\|=\|\gamma_2'(s)\|=\|\textup{exp}^{-1}_zy\|\le d^N(z,y)\le C_1 \|u_t-v_s\|_{C^0(M,\mathbb{R}^q)}
	\end{align}
    Using standard comparison theory for Riemannian manifolds with sectional curvature bounded from above (e.g. \cite[equation (5.5.5) in Theorem 5.5.1]{Jost}) we deduce
	\[\|J_s(t)\|\le \|J_s(1)\|\]
	for all $t\in [0,1]$, provided that $\varepsilon >0$ is small enough. If we combine this with \eqref{eq z8} and \eqref{eq z9} we get
	\begin{align*}
	\|P^{c}Z-Z\|\le C(\varepsilon)\|u_t-v_s\|_{C^0(M,\mathbb{R}^q)}\|Z\|
	\end{align*}
	for all $Z\in T_{u_0(x)}N$.
\end{proof}

The operator norms of the induced maps
\[P^{v_s,u_t}\colon\Gamma_{W^1_p}(\Sigma M \otimes(\pi\circ v_s)^*TN)\rightarrow \Gamma_{W^1_p}(\Sigma M\otimes(\pi\circ u_t)^*TN)\]
are finite. However, we need that these operator norms are uniformly bounded in $v_s$ and $u_t$. To that end we need the following lemma.

\begin{lemma}\label{lemma z1}Choose $\varepsilon, \delta, R,$ and $T$ as in Table \ref{tabelle konstanten}. There exists $C=C(R)>0$ s.t.
	\begin{align*}
	\|\nabla^{(\pi\circ u_t)^*TN}_X \big(P^{v_s,u_t}Z\big)|_x\|\le C\|Z\|_{\Gamma_{C^1}((\pi\circ v_s)^*TN)} \|X\|
	\end{align*}
	for all $X\in T_xM$, $x\in M$, $Z\in\Gamma_{C^1}((\pi\circ v_s)^*TN)$, $s,t\in[0,T]$, $u,v\in B_R^T(v_0)\cap\{u|_{t=0}=u_0\}$.
\end{lemma}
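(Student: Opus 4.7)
The plan is to differentiate $P^{v_s,u_t}Z$ along a curve in $M$ by means of a geodesic variation, in essentially the same spirit as the proof of Lemma~\ref{lemma parallel transport deviation}. Fix $X\in T_xM$ and a $C^1$-curve $c$ in $M$ with $c(0)=x$, $\dot c(0)=X$. Set $\alpha(r):=(\pi\circ v_s)(c(r))$, $\beta(r):=(\pi\circ u_t)(c(r))$, and form the family of unique shortest geodesics
\[F(r,\tau):=\exp_{\alpha(r)}\!\big(\tau\,\exp_{\alpha(r)}^{-1}\beta(r)\big)\]
(well-defined by \eqref{eq z4}). The fiber value $(P^{v_s,u_t}Z)(c(r))$ is obtained by $\tau$-parallel transport of $Z(c(r))$ along $\tau\mapsto F(r,\tau)$ and evaluation at $\tau=1$, so the quantity to estimate is the $r$-derivative of this vector field at $r=0$.

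I would then introduce an orthonormal basis $(E_i)$ of $T_{\alpha(0)}N$ and extend it to $E_i(r,\tau)$ by parallel transport first along $\alpha$ in $r$ and subsequently along each $\tau$-geodesic of $F$. Writing the parallel vector field $W(r,\tau)$ with $W(r,0)=Z(c(r))$ in this frame as $W=W^iE_i$ with $W^i(r)=\langle Z(c(r)),E_i(r,0)\rangle$ (independent of $\tau$), the derivative splits as
\[\frac{D}{dr}\Big|_{r=0}W(r,1) \;=\; \frac{dW^i}{dr}(0)\,E_i(0,1) \;+\; W^i(0)\,V_i(1),\qquad V_i(\tau):=\frac{D}{dr}\Big|_{r=0}E_i(r,\tau).\]
The first summand is, by the parallelity of $E_i(\cdot,0)$ along $\alpha$, precisely $(P^{v_s,u_t}\nabla^{(\pi\circ v_s)^*TN}_XZ)$, so it is bounded by $\|Z\|_{\Gamma_{C^1}}\|X\|$ without further work.

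The second summand is the one that requires the geodesic-variation argument. I would use $V_i(0)=0$ together with $\frac{D}{d\tau}E_i\equiv 0$ and the standard commutator identity to obtain $\frac{D}{d\tau}V_i = R^{TN}(\partial_\tau F,\partial_r F)E_i$, which upon integration in $\tau$ reduces the estimate to control of $\|\partial_\tau F(0,\cdot)\|$ and $\|\partial_r F(0,\cdot)\|$. The first is the constant $d^N(\alpha(0),\beta(0))<\varepsilon$ (by \eqref{eq z4}). The second is a Jacobi field along $\tau\mapsto F(0,\tau)$ with boundary values $d(\pi\circ v_s)_x(X)$ and $d(\pi\circ u_t)_x(X)$, both bounded by $C(R)\|X\|$ thanks to the uniform $C^1$-bound coming from $u,v\in B_R^T(v_0)$ and the smoothness of $\pi$. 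Combined with $|W^i(0)|\le\|Z\|_{C^0}$, this yields the desired estimate.

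The main obstacle is the last step: propagating the $\|X\|$-sized bound on the \emph{boundary} values of the Jacobi field $\partial_r F(0,\cdot)$ to a uniform bound on the whole interval $[0,1]$. On the short geodesic $\tau\mapsto F(0,\tau)$, whose length is $<\varepsilon$, this follows from a standard Rauch-type comparison applied to the Jacobi equation on $N$, analogous to (but with both-sided rather than one-sided boundary data than) the estimate used in Lemma~\ref{lemma parallel transport deviation}. With that in place, the two summands combine to give $\|\nabla^{(\pi\circ u_t)^*TN}_X(P^{v_s,u_t}Z)|_x\|\le C(R)\|Z\|_{\Gamma_{C^1}}\|X\|$, as required.
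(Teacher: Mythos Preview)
Your proof is correct and follows essentially the same route as the paper: both use the geodesic variation $F(r,\tau)=\exp_{g(c(r))}\bigl(\tau\,\exp_{g(c(r))}^{-1}f(c(r))\bigr)$ (with $g=\pi\circ v_s$, $f=\pi\circ u_t$) and reduce the estimate to a curvature integral together with a Jacobi-field bound on $\partial_rF$ controlled by the $C^1$-bounds coming from $B_R^T(v_0)$. The paper organizes the computation slightly differently---via a difference quotient and an inserted ``rectangle holonomy'' term $P^{-1}(P^\gamma)^{-1}PP^\gamma$ in place of your product-rule splitting in the frame $E_i(r,\tau)$---but the substance (and the remaining Jacobi-field step you flag) is identical.
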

\begin{proof}
	We write $f:=\pi\circ u_t$, $g:=\pi\circ v_s$, $P:=P^{v_s,u_t}$, and moreover $\nabla:=\nabla^{(\pi\circ u_t)^*TN}$. Let $Z\in\Gamma_{C^1}((\pi\circ v_s)^*TN)$, $x\in M$, $X\in T_xM$, and $\gamma\colon (-c,c)\rightarrow M$ a smooth curve parametrized proportionally to arc length with $\gamma(0)=x$, $\gamma'(0)=X$. Let $(E_i(.))$ be a local orthonormal frame around $x$ of $f^*TN$ that is parallel along $\gamma$. Locally we have
	\[P(y)Z(y)=f^i(y)E_i(y)\]
	for suitable functions $f^i$. Then it holds that
	\[\nabla_X\big(PZ\big)|_x=\big(L_Xf^i\big)(x)E_i(x).\]
	In the following, we estimate $(L_Xf^i)E_i$. To that end, we denote by $P^\gamma$ the parallel transport in $TN$ along $f\circ \gamma$ from $f(x)$ to $f(\gamma(\tau))$. We also denote by $P^\gamma$ the parallel transport in $TN$ along $g\circ \gamma$ from $g(x)$ to $g(\gamma(\tau))$. It should always be clear from the context which one we mean.	We calculate
	\begin{align*}
	&(L_Xf^i)(x)E_i(x)\\
	&=\lim_{\tau\to 0}\frac{f^i(\gamma(\tau))-f^i(x)}{\tau} E_i(x)\\
	&=\lim_{\tau\to 0}\frac{f^i(\gamma(\tau))E_i(x)-f^i(x)E_i(x)}{\tau}\\
	&=\lim_{\tau\to 0}\frac{(P^\gamma)^{-1}\big(f^i(\gamma(\tau))P^\gamma E_i(x)\big)-PZ(x)}{\tau}\\
	&=\lim_{\tau\to 0}\frac{(P^\gamma)^{-1}\big(f^i(\gamma(\tau))E_i(\gamma(\tau))\big)-PZ(x)}{\tau}\\
	&=\lim_{\tau\to 0}\frac{(P^\gamma)^{-1}PZ(\gamma(\tau))-PZ(x)}{\tau}\\
	&=\lim_{\tau\to 0}\frac{(P^\gamma)^{-1}PZ(\gamma(\tau))-(P^{\gamma})^{-1}PP^\gamma Z(x)+(P^{\gamma})^{-1}PP^\gamma Z(x)-PZ(x)}{\tau}\\
	\end{align*}
	We will show
	\begin{align}\label{eq zzz10}
	\|\big((P^\gamma)^{-1}Z(\gamma(\tau))\big)-Z(x)\|\le \tau \|X\|\|Z\|_{\Gamma_{C^1}(g^*TN)}
	\end{align}
	and 
	\begin{align}\label{eq zzz11}
	\begin{split}
	&\|P^{-1}(P^\gamma)^{-1}PP^\gamma Z(x)-Z(x)\|\\
	&\le \tau C(R)\|u_t-v_s\|_{C^0(M,\mathbb{R}^q)} \|X\|\|Z(x)\|.
	\end{split}
	\end{align}
	After that the lemma follows easily.
	
	Equation \eqref{eq zzz10} directly follows from the fundamental theorem of calculus and the fact that we can recover a covariant derivative by differentiating its parallel transport. To show \eqref{eq zzz11} we recall that 
	\[P^\Box:=P^{-1}(P^\gamma)^{-1}PP^\gamma\colon T_{g(x)}N\to T_{g(x)}N\]
	is the parallel transport along the following rectangle $\Box$: first we follow $g\circ \gamma$ from $g(x)$ to $g(\gamma(\tau))$. Then we go along the unique shortest geodesic of $N$ connecting $g(\gamma(\tau))$ and $f(\gamma(\tau))$. Afterwards we follow $f\circ g$ from $f(\gamma(\tau))$ to $f(x)$. Finally we go along the unique shortest geodesic of $N$ connecting $f(x)$ and $g(x)$.
	We can estimate $\|P^\Box-\textup{Id}\|$ with the same methods we used to show Lemma \ref{lemma parallel transport deviation}. More precisely, we consider the geodesic variation
	\[\alpha\colon [0,\tau]\times[0,1]\to N,\hspace{3em} (s_1,t_1)\mapsto \textup{exp}_{g(\gamma(s_1))}(t_1\textup{exp}_{g(\gamma(s_1))}^{-1}f(\gamma(s_1))).\]
	By definition, the image of $\alpha$ is the filled rectangle $\Box$. Analogously to the proof of Lemma \ref{lemma parallel transport deviation} (the fact that we consider a rectangle now but before we considered a triangle doesn't change the nature of the argument) we get
	\begin{align*}
		\|P^{\Box}Z-Z\|\le \tau C_1\sup_{(s_1,t_1)\in[0,\tau]\times[0,1]} \|\frac{\partial}{\partial t_1}\alpha(s_1,t_1)\| \sup_{(s_1,t_1)\in[0,\tau]\times[0,1]}\|\frac{\partial}{\partial s_1}\alpha(s_1,t_1)\|
	\end{align*}
	where $C_1$ only depends on the Riemannian manifold $N$. Moreover, by \eqref{eq z4} and the (global) Lipschitz continuity of $\pi$ we have
	\begin{align*}
	\|\frac{\partial}{\partial t_1}\alpha(s_1,t_1)\|=\|\textup{exp}^{-1}_{g(\gamma(s_1))}f(\gamma(s_1))\|\le \frac{1}{1-\delta_0C} \|v_s - u_t\|_{C^0(M,\mathbb{R}^q)}
	\end{align*}
	for all $s_1,t_1$. Since it also holds that
	\begin{align*}
	\|\frac{\partial}{\partial s_1}\alpha(s_1,t_1)\|\le C(R)\|X\|
	\end{align*}
	for all $s_1,t_1$, equation \eqref{eq zzz11} follows.
	
\end{proof}

From Lemma \ref{lemma z1} we directly get the following corollary.

\begin{corollary}\label{cor parallel bdd w1p}Choose $\varepsilon, \delta, R,$ and $T$ as in Table \ref{tabelle konstanten}. For $u,v\in B_R^T(v_0)\cap\{u|_{t=0}=u_0\}$, $s,t\in[0,T]$, the isometries 
	\[P^{v_s,u_t}\colon\Gamma_{L^p}(\Sigma M \otimes(\pi\circ v_s)^*TN)\rightarrow \Gamma_{L^p}(\Sigma M\otimes(\pi\circ u_t)^*TN)\] 
	restrict to isomorphisms of Banach spaces
	\[P^{v_s,u_t}\colon\Gamma_{W^1_p}(\Sigma M \otimes(\pi\circ v_s)^*TN)\rightarrow \Gamma_{W^1_p}(\Sigma M\otimes(\pi\circ u_t)^*TN)\]
	with uniformly bounded operator norm, i.e., there exists $C=C(R,p)$ s.t.
	\[\|P^{v_s,u_t}\|_{L(W^1_p,W^1_p)}\le C\]
	for all $u,v\in B_R^T(v_0)\cap\{u|_{t=0}=u_0\}$, $s,t\in[0,T]$.
\end{corollary}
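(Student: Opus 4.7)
The plan is to combine two observations: that $P^{v_s,u_t}$ is a fiberwise isometry, which handles the $L^p$ part of the norm automatically; and that Lemma \ref{lemma z1} contains (essentially) a pointwise bound on the covariant derivative of $P^{v_s,u_t}Z$ in terms of $\nabla Z$ and $Z$ themselves, which handles the remaining part. Together with a density argument these yield the claimed uniform $W^1_p$ bound.

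For the $L^p$ part, parallel transport along geodesics of $N$ is a linear isometry between tangent spaces, and tensoring with $\mathrm{id}_{\Sigma M}$ leaves it a pointwise isometry between the fibers of $\Sigma M \otimes (\pi\circ v_s)^*TN$ and $\Sigma M \otimes (\pi\circ u_t)^*TN$. Hence $\|P^{v_s,u_t} Z\|_{L^p} = \|Z\|_{L^p}$ for every measurable section, and it suffices to control $\|\nabla P^{v_s,u_t}Z\|_{L^p}$.

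For the derivative estimate I would revisit the proof of Lemma \ref{lemma z1} and extract the sharper pointwise inequality
\[
\|\nabla^{(\pi\circ u_t)^*TN}_X (P^{v_s,u_t} Z)|_x\| \le \|\nabla^{(\pi\circ v_s)^*TN}_X Z|_x\| + C(R)\|u_t-v_s\|_{C^0(M,\mathbb{R}^q)}\|X\|\|Z(x)\|
\]
for $Z \in \Gamma_{C^1}$. The two summands come directly from the final decomposition appearing in that proof: the first difference quotient there tends to $(P^\gamma)^{-1}P \nabla^{(\pi\circ v_s)^*TN}_X Z|_x$, whose norm equals $\|\nabla^{(\pi\circ v_s)^*TN}_X Z|_x\|$ by the isometry property, while the second term is precisely what \eqref{eq zzz11} bounds. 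Since $\|u_t-v_s\|_{C^0}$ is uniformly bounded on $B_R^T(v_0)\cap\{u|_{t=0}=u_0\}$ (by $2R$ plus a bound depending only on $v_0$), raising to the $p$-th power and integrating over $M$ yields the uniform estimate $\|\nabla (P^{v_s,u_t}Z)\|_{L^p} \le \|\nabla Z\|_{L^p} + C(R)\|Z\|_{L^p}$ for $C^1$ sections, and combined with the $L^p$ isometry gives $\|P^{v_s,u_t}Z\|_{W^1_p} \le C'(R,p)\|Z\|_{W^1_p}$ uniformly in the parameters $u,v,t,s$.

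Finally, since $C^1$ sections are dense in $W^1_p$, the bound extends to all of $W^1_p$ by continuity, and bijectivity is immediate because the pointwise inverse is $P^{u_t,v_s}$, which satisfies the same estimate with the roles of $u,v$ and $s,t$ swapped. The main (minor) technical point to flag is that the bound as stated in Lemma \ref{lemma z1} has $\|Z\|_{C^1}$ on the right-hand side, which does not on its own give $W^1_p$ control; what one actually needs, and what the proof of that lemma really produces, is the pointwise version displayed above.
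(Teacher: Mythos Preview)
Your proof is correct and is exactly what the paper has in mind --- the paper simply records that the corollary follows ``directly'' from Lemma~\ref{lemma z1} without spelling out the details. Your remark that the \emph{statement} of Lemma~\ref{lemma z1} (with $\|Z\|_{C^1}$ on the right) is too weak on its own and that one really needs the pointwise decomposition from its proof is accurate and well observed; the only further detail worth making explicit is the routine passage from sections of $(\pi\circ v_s)^*TN$ to sections of the twisted bundle $\Sigma M\otimes(\pi\circ v_s)^*TN$, which works because $P$ acts as $\mathrm{id}_{\Sigma M}\otimes P$ and the zeroth-order difference $P^{-1}\nabla^{f^*TN}P-\nabla^{g^*TN}$ tensors to $\mathrm{id}_{\Sigma M}\otimes(P^{-1}\nabla^{f^*TN}P-\nabla^{g^*TN})$ with the same pointwise bound.
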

\newpage

\subsection{The projection onto the kernel}\label{section kernel projection}
When we write $\textup{ker}(\slashed{D}^{\pi\circ u_t})$ in the following we mean the kernel of
\[\slashed{D}^{\pi\circ u_t}\colon\Gamma_{W^1_2}\to \Gamma_{L^2}.\]
In this section we assume $m=\textup{dim}(M)\equiv 0,1,2,4 \text{ }(\textup{mod } 8)$. In Remark \ref{remark dim} below it is explained why we restrict to these dimensions. Note that the dimension of $N$ is still arbitrary.
\begin{lemma}\label{lemma kern lokal konstant} Assume that $\textup{dim}_\mathbb{K}\textup{ker}(\slashed{D}^{u_0})=1$, where
\begin{align*}
\mathbb{K}=\begin{cases}
\mathbb{C} &\text{if }m\equiv 0,1 \text{ }(\textup{mod } 8),\\
\mathbb{H} &\text{if }m\equiv 2,4 \text{ }(\textup{mod } 8).\\
\end{cases}
\end{align*}	
Choose $\varepsilon, \delta, R,$ and $T$ as in Lemma \ref{lemma Kruemmungsargument}. If $R>0$ is small enough, then it holds that \[\textup{dim}_\mathbb{K}\textup{ker}(\slashed{D}^{\pi\circ u_t})=1\]
for all $u\in B_R^T(v_0)\cap \{u|_{t=0}=u_0\}$, $t\in [0,T]$, and there exists $\Lambda=\Lambda(R)>0$ s.t. 
\[\textup{spec}(\slashed{D}^{\pi\circ u_t})\setminus\{0\}\subset \mathbb{R}\setminus(-\Lambda,\Lambda)\]
for all $u\in B_R^T(v_0)\cap \{u|_{t=0}=u_0\}$, $t\in [0,T]$.
\end{lemma}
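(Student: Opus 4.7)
The plan is to transport $\slashed{D}^{\pi\circ u_t}$ to a single fixed Hilbert space via parallel transport, realise it there as a small bounded self-adjoint perturbation of $\slashed{D}^{u_0}$, and then combine a Riesz projection argument with the $\mathbb{K}$-linearity coming from the quaternionic/complex structure on $\Sigma M$ to pin down the dimension of the kernel.

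First I would use Corollary \ref{cor spectral properties} to pick $\Lambda_0>0$ with $\textup{spec}(\slashed{D}^{u_0})\cap[-\Lambda_0,\Lambda_0]=\{0\}$, and set $\Lambda:=\Lambda_0/2$. Taking the constant homotopy $v_s\equiv u_0$ in the notation of Section \ref{section estimates}, I define $P_t:=P^{u_0,\pi\circ u_t}$ and
$$\tilde{\slashed{D}}_t:=P_t^{-1}\slashed{D}^{\pi\circ u_t}P_t\colon \Gamma_{W^1_2}(\Sigma M\otimes u_0^*TN)\to\Gamma_{L^2}(\Sigma M\otimes u_0^*TN).$$
Since $P_t$ is a fibrewise isometry, $\tilde{\slashed{D}}_t$ is self-adjoint with the same spectrum as $\slashed{D}^{\pi\circ u_t}$. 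Lemma \ref{lemma Kruemmungsargument} applied with $v\equiv u_0$ and integrated over $M$ yields the operator-norm bound
$$\|\tilde{\slashed{D}}_t-\slashed{D}^{u_0}\|_{L(L^2,L^2)}\le C\|u_t-u_0\|_{C^0(M,\mathbb{R}^q)}\le 2CR,$$
using \eqref{eq z11} and the triangle inequality through $v_0(t,\cdot)$.

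Now I would apply a standard Riesz projection argument. For $R$ small enough the circle $|\mu|=\Lambda_0$ lies in the resolvent set of $\tilde{\slashed{D}}_t$, so that
$$E_t:=\frac{1}{2\pi i}\oint_{|\mu|=\Lambda_0}(\mu I-\tilde{\slashed{D}}_t)^{-1}\,d\mu$$
is well defined and satisfies $\|E_t-E_0\|_{op}<1$; consequently $\textup{rank}(E_t)=\textup{rank}(E_0)=\dim_\mathbb{R}\ker(\slashed{D}^{u_0})=\dim_\mathbb{R}\mathbb{K}$, while the remaining spectrum of $\tilde{\slashed{D}}_t$ stays outside $(-\Lambda,\Lambda)$, which gives the asserted spectral gap. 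By Section \ref{section quat str}, $\slashed{D}^{\pi\circ u_t}$ is $\mathbb{K}$-linear, and since $P_t$ acts only on the $TN$-factor while the $\mathbb{K}$-action lives on the $\Sigma M$-factor, the same holds for $\tilde{\slashed{D}}_t$. Every eigenspace of $\tilde{\slashed{D}}_t$ is therefore a $\mathbb{K}$-module, of $\mathbb{R}$-dimension divisible by $\dim_\mathbb{R}\mathbb{K}$, and comparing with $\textup{rank}(E_t)$ shows that $\textup{spec}(\tilde{\slashed{D}}_t)\cap(-\Lambda_0,\Lambda_0)$ consists of exactly one eigenvalue $\lambda_t$, whose eigenspace has $\dim_\mathbb{K}=1$.

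The remaining task, and the only place where the restrictions $m\equiv 0,1,2,4\,(\textup{mod }8)$ enter, is to show $\lambda_t=0$. For $m$ even (i.e.\ $m\equiv 0,2,4\,(\textup{mod }8)$) the complex volume element $\omega=i^{m/2}e_1\cdots e_m$ anti-commutes with $\tilde{\slashed{D}}_t$, producing the spectral symmetry $\lambda\leftrightarrow-\lambda$ with equal $\mathbb{K}$-multiplicities; a second eigenvalue $-\lambda_t\neq\lambda_t$ in $(-\Lambda_0,\Lambda_0)$ would contradict the uniqueness just established, forcing $\lambda_t=0$. For $m\equiv 1\,(\textup{mod }8)$ chirality is absent, and I would instead invoke the $\mathbb{Z}/2$-valued $\alpha$-invariant: $\dim_\mathbb{C}\ker(\slashed{D}^{\pi\circ u_t})\bmod 2$ is a homotopy invariant along the continuous family $(\pi\circ u_t)_{t\in[0,T]}$, hence equals $1$, which together with the upper bound $\dim_\mathbb{C}\ker\le 1$ already obtained forces equality. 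This final parity step is the main obstacle, since without it the perturbed eigenvalue $\lambda_t$ could in principle drift off zero while still respecting the $\mathbb{K}$-structure.
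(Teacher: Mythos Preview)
Your Riesz–projection argument after conjugating by $P_t$ is correct and is essentially a cleaner packaging of what the paper does with Rayleigh quotients and the Min--Max principle: both exploit that, by Lemma~\ref{lemma Kruemmungsargument}, the difference $\tilde{\slashed{D}}_t-\slashed{D}^{u_0}$ is a zeroth order operator with $L^2\to L^2$ norm $O(R)$. The conclusion that exactly one $\mathbb{K}$-line of eigenvectors sits inside $(-\Lambda_0,\Lambda_0)$, with the rest of the spectrum outside, is the same in both approaches.

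Where you diverge from the paper is in forcing $\lambda_t=0$. The paper does this \emph{uniformly} for all $m\equiv 0,1,2,4\ (\textup{mod }8)$ by invoking the spectral symmetry of $\slashed{D}^f$, which holds whenever $m\not\equiv 3\ (\textup{mod }4)$ (see the footnote in the paper's proof and the reference to Ginoux). In particular, spectral symmetry is available also for $m\equiv 1\ (\textup{mod }8)$; there is no need to fall back on the $\mathbb{Z}/2$-index. Your $\alpha$-invariant argument does work, but it is more delicate than necessary: as written it appeals to ``the continuous family $(\pi\circ u_t)_{t\in[0,T]}$'', yet elements of $B_R^T(v_0)$ are only \emph{bounded} maps $[0,T]\to C^1(M,\mathbb{R}^q)$, not continuous in $t$. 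You would have to rephrase this as: for each fixed $t$, the map $\pi\circ u_t$ is $C^0$-close to $u_0$, hence homotopic to it (say via the geodesic homotopy), so the twisted index agrees. This is an easy fix, but the paper's route sidesteps the issue entirely by arguing pointwise in $t$ via symmetry.

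In short: your proof is correct after the small repair above, and the resolvent/projection language arguably makes the perturbation step more transparent; the paper's version trades that for a single symmetry argument that handles all four residue classes at once and avoids any homotopy input.
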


\begin{proof}
	Since the spectrum of $\slashed{D}^{u_0}$ is a discrete subset of $\mathbb{R}$, we can choose $\tilde{\Lambda}>0$ s.t. $\textup{spec}(\slashed{D}^{u_0})\setminus\{0\}\subset \mathbb{R}\setminus(-\tilde{\Lambda},\tilde{\Lambda})$.	Let $u\in B_R^T(v_0)\cap\{u|_{t=0}=u_0\}$.
	For any $\psi \in \textup{ker}(\slashed{D}^{u_0})\setminus\{0\}$ we write $\tilde{\psi}:=P^{u_0,u_t}\psi$. Using Lemma \ref{lemma Kruemmungsargument} we get
	\begin{align}\label{eq 111}
	\|\slashed{D}^{\pi\circ u_t}\tilde{\psi}\|_{L^2}=\|\left(P^{u_0,u_t}\slashed{D}^{u_0}(P^{u_0,u_t})^{-1}-\slashed{D}^{\pi\circ u_t}\right)\tilde{\psi}\|_{L^2}\le C(R)R\|\tilde{\psi}\|_{L^2}
	\end{align}
	for all $\psi\in\textup{ker}(\slashed{D}^{u_0})\setminus\{0\}$. Hence we can estimate the Rayleigh quotient of $(\slashed{D}^{\pi\circ u_t})^2$ by
	\begin{align*}
	\frac{((\slashed{D}^{\pi\circ u_t})^2\tilde{\psi},\tilde{\psi})_{L^2}}{(\tilde{\psi},\tilde{\psi})_{L^2}}\le C_1(R)R
	\end{align*}
	for all $\psi\in\textup{ker}(\slashed{D}^{u_0})\setminus\{0\}$. Applying the Min-Max principle, we deduce that $(\slashed{D}^{\pi\circ u_t})^2$ has at least one eigenvalue (we count eigenvalues by their $\mathbb{K}$-multiplicity) in the interval $[0,C_1(R)R]$. In particular, $\slashed{D}^{\pi\circ u_t}$ has at least one eigenvalue in $[-C_1(R)R,C_1(R)R]$. Now we set 
	\[\Lambda:=\frac12 \tilde{\Lambda}\]
	and choose $R>0$ so small that $C(R)R<\frac12\Lambda$ and $C_1(R)R<\frac12\Lambda$. Hence we have shown that $\slashed{D}^{\pi\circ u_t}$ has at least one eigenvalue in $[-\Lambda,\Lambda]$. With the same methods we can show that $\slashed{D}^{\pi\circ u_t}$ has precisely one eigenvalue in $[-\Lambda,\Lambda]$. Suppose this is not the case. Choose two eigenvalues $\lambda_1,\lambda_2$ of $\slashed{D}^{\pi\circ u_t}$ in $[-\Lambda,\Lambda]$ with corresponding eigenvectors $\psi_1,\psi_2\in\Gamma_{W^1_2}$. For $\tilde{\psi}_i:={P^{u_0,u_t}}^{-1}\psi_i$ we get as above
	\begin{align*}
	\|(\lambda_i-\slashed{D}^{u_0})\tilde{\psi}_i\|_{L^2}\le C(R)R\|\tilde{\psi}_i\|_{L^2}< \Lambda\|\tilde{\psi}_i\|_{L^2}.
	\end{align*}
	Therefore,
	\begin{align*}
	\frac{\|\slashed{D}^{u_0}\tilde{\psi}_i\|_{L^2}}{\|\tilde{\psi}_i\|_{L^2}}&\le \frac{\|(\lambda_i-\slashed{D}^{u_0})\tilde{\psi}_i\|_{L^2}}{\|\tilde{\psi}_i\|_{L^2}} + \frac{\|\lambda_i \tilde{\psi}_i\|_{L^2}}{\|\tilde{\psi}_i\|_{L^2}}
	\le \Lambda + \Lambda
	=\tilde{\Lambda}.
	\end{align*}
	As before, we conclude that $\slashed{D}^{u_0}$ has at least two eigenvalues in $[-\tilde{\Lambda},\tilde{\Lambda}]$. Because of the choice of $\tilde{\Lambda}$ this is a contradiction to $\textup{dim}_\mathbb{K}\textup{ker}(\slashed{D}^{u_0})=1$.
	
	We have shown that $\slashed{D}^{\pi\circ u_t}$ has precisely one eigenvalue in $[-\Lambda,\Lambda]$. The symmetry of the spectrum of $\slashed{D}^{\pi\circ u_t}$ yields that this eigenvalue has to be zero.\footnote{If $m\not\equiv 3 \text{ }(\textup{mod } 4)$, then the spectrum of $\slashed{D}^f$ is symmetric w.r.t. zero. This can be shown analogously to \cite[Theorem 1.3.7 iv)]{GiSpec}.}
	
	It remains to show that it holds that $\textup{spec}(\slashed{D}^{\pi\circ u_t})\setminus\{0\}\subset \mathbb{R}\setminus(-\Lambda,\Lambda)$ for all $u\in B_R^T(v_0)\cap \{u|_{t=0}=u_0\}$, $t\in [0,T]$. To that end we assume that this is not the case for some $s\in(0,T]$. Then there exists $\mu\in\mathbb{R}\setminus\{0\}$ with $|\mu|\le\Lambda$ and $(\mu-\slashed{D}^{\pi\circ u_s})\psi=0$ for some $\psi\in\Gamma_{W^1_2}\setminus\{0\}$. Again for $\tilde{\psi}:={P^{u_0,u_s}}^{-1}\psi$ we have
	\begin{align*}
	\|(\mu-\slashed{D}^{u_0})\tilde{\psi}\|_{L^2}\le C(R)R\|\tilde{\psi}\|_{L^2}< \Lambda\|\tilde{\psi}\|_{L^2}.
	\end{align*}
    Therefore,
	\begin{align*}
	\frac{\|\slashed{D}^{u_0}\tilde{\psi}\|_{L^2}}{\|\tilde{\psi}\|_{L^2}}&\le \frac{\|(\mu-\slashed{D}^{u_0})\tilde{\psi}\|_{L^2}}{\|\tilde{\psi}\|_{L^2}} + \frac{\|\mu \tilde{\psi}\|_{L^2}}{\|\tilde{\psi}\|_{L^2}}
	\le \Lambda + \Lambda
	=\tilde{\Lambda}.
	\end{align*}
	Additionally, as above we have
	\begin{align*}
	\frac{\|\slashed{D}^{u_0}{P^{u_0,u_s}}^{-1}\varphi\|_{L^2}}{\|{P^{u_0,u_s}}^{-1}\varphi\|_{L^2}}\le \tilde{\Lambda}
	\end{align*}
	for all $\varphi\in\textup{ker}(\slashed{D}^{\pi \circ u_{s}})\setminus\{0\}$.
	As before, we conclude that $\slashed{D}^{u_0}$ has at least $1+1=2$ Eigenvalues in $[-\tilde{\Lambda},\tilde{\Lambda}]$, which is a contradiction.
\end{proof}

\begin{remark}\label{remark dim}Lemma \ref{lemma kern lokal konstant} is the only place where the restrictions on $m$ in Theorem \ref{thereom short time existence} play a role. In the proof of Lemma \ref{lemma kern lokal konstant} we used that the spectrum of the Dirac operator along maps is symmetric, which holds if $m\not\equiv 3,7 \text{ }(\textup{mod } 8)$. The dimensions $m\equiv 5,6 \text{ }(\textup{mod } 8)$ were excluded, since in these dimensions there exists no quaternionic structure on $\Sigma_m$ that commutes with Clifford-multiplication, however, there exists a quaternionic structure on $\Sigma_m$ that \textit{anticommutes} with Clifford-multiplication. This yields that the kernel of $\slashed{D}^f$ is a quaternionic vector space, \textit{but not the other eigenspaces}. 
\end{remark}

\begin{remark}\label{remark index}Note that in dimensions $m\equiv 1,2 \text{ }(\textup{mod } 8)$ we can use index theoretical informations to deduce that the dimension of the kernel of $\slashed{D}^{u_0}$ can not decrease along homotopies of $u_0$ if we have $\textup{dim}_\mathbb{K}\textup{ker}(\slashed{D}^{u_0})=1$. To be more precise, for $f\colon M\to N$ we have an index $\textup{ind}_{f^*TN}(M)\in KO^{-m}(pt)$, c.f. \cite[p. 151]{LaMi}. Using the isomorphism $KO^{-m}(pt)\cong \mathbb{Z}_2$ if $m\equiv 1,2 \text{ }(\textup{mod } 8)$ it holds that \cite[Theorem 7.13. on page 151]{LaMi}
	\begin{align*}
	\textup{ind}_{f^*TN}(M)=\begin{cases}
	[\textup{dim}_{\mathbb{C}}\textup{ker}(\slashed{D}^f)]_{\mathbb{Z}_2} &\text{if }m\equiv 1 \text{ }(\textup{mod } 8),\\
	[\textup{dim}_{\mathbb{H}}\textup{ker}(\slashed{D}^f)]_{\mathbb{Z}_2}  &\text{if }m\equiv 2 \text{ }(\textup{mod } 8).\\
	\end{cases}
	\end{align*}	
	 Since the index is invariant under homotopies \cite[Corollary 4.4.]{AmmGin} and $\textup{ind}_{u_0^*TN}(M)\neq 0$ we have that $\textup{ind}_{g^*TN}(M)\neq 0$ for any $g\colon M\to N$ homotopic to $u_0$. Hence,
	\[\textup{dim}_\mathbb{K}\textup{ker}(\slashed{D}^{g})\ge 1=\textup{dim}_\mathbb{K}\textup{ker}(\slashed{D}^{u_0}).\]
\end{remark}

\begin{lemma}[Uniform bounds for the resolvents]\label{lemma resolvents uniform bounds}Assume we are in the situation of Lemma \ref{lemma kern lokal konstant}. We consider the resolvent $R(\mu,\slashed{D}^{\pi\circ u_t})\colon \Gamma_{L^2}\to\Gamma_{L^2}$ of $\slashed{D}^{\pi\circ u_t}\colon \Gamma_{W^1_2}\to \Gamma_{L^2}$. By Lemma \ref{lemma ellipticregnonsmooth} we know that the restriction 
	\[R(\mu,\slashed{D}^{\pi\circ u_t})\colon \Gamma_{L^p}\to\Gamma_{W^1_p}\]
	is well-defined and bounded for any $2\le p<\infty.$ 
	If $R>0$ is small enough, then there exists $C=C(p,R)>0$ s.t.
	\[\sup_{|\mu|=\frac{\Lambda}{2}}\|R(\mu, \slashed{D}^{\pi\circ u_t})\|_{L(L^p,W^1_p)}<C\]
	for all $u\in B^T_R(v_0)\cap \{u|_{t=0}=u_0\}$, $t\in[0,T]$.
\end{lemma}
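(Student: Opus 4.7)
The plan is to combine a spectral gap estimate on $L^2$ with the elliptic regularity of Lemma \ref{lemma ellipticregnonsmooth}, then bootstrap via Sobolev embedding to reach arbitrary $p\ge 2$. Fix $\mu$ with $|\mu|=\Lambda/2$ and $\varphi\in\Gamma_{L^p}$, and set $\psi:=R(\mu,\slashed{D}^{\pi\circ u_t})\varphi$, so that $(\mu I-\slashed{D}^{\pi\circ u_t})\psi=\varphi$.

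\emph{Step 1 ($L^2$ bound).} By Lemma \ref{lemma kern lokal konstant}, the spectrum of $\slashed{D}^{\pi\circ u_t}$ is contained in $\{0\}\cup(\mathbb{R}\setminus(-\Lambda,\Lambda))$. Every $\mu$ on the circle $|\mu|=\Lambda/2$ therefore has distance at least $\Lambda/2$ from the spectrum, and since $\slashed{D}^{\pi\circ u_t}$ is self-adjoint the spectral theorem yields
\[\|R(\mu,\slashed{D}^{\pi\circ u_t})\|_{L(L^2,L^2)}\le 2/\Lambda\]
uniformly in $u$, $t$, and $\mu$ on that circle.

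\emph{Step 2 (Uniformity of the elliptic constant).} I would then revisit the proof of Lemma \ref{lemma ellipticregnonsmooth} to verify that the constant $C(f,\Sigma M)$ can be chosen uniformly in $f=\pi\circ u_t$ for $u\in B^T_R(v_0)\cap\{u|_{t=0}=u_0\}$. Fix once and for all a single smooth $g\in C^\infty(M,N)$ that is sufficiently $C^0$-close to $\pi\circ u_0$; by the choices of $R$ and $T$ in Table \ref{tabelle konstanten} it is then also $C^0$-close to every $\pi\circ u_t$, so that the same $g$ may serve as the smooth reference map in the proof of Lemma \ref{lemma ellipticregnonsmooth} for every $u_t$. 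The parallel transport $P$ in $N$ between $g$ and $\pi\circ u_t$ induces isomorphisms on $W^1_p$-sections whose operator norms are bounded uniformly by the arguments behind Lemma \ref{lemma z1} and Corollary \ref{cor parallel bdd w1p} (the only input being a uniform $C^1$-bound on $\pi\circ u_t$), and the zero-order operator $G=P^{-1}\slashed{D}^{\pi\circ u_t}P-\slashed{D}^g$ constructed there has $C^0$-coefficients controlled by the uniformly bounded $C^1$-distance between $\pi\circ u_t$ and $g$. Every step in the proof of Lemma \ref{lemma ellipticregnonsmooth} therefore becomes uniform in $u_t$.

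\emph{Step 3 (Bootstrap to $L^p$).} Combining Steps 1 and 2 at the exponent $p=2$ gives
\[\|\psi\|_{W^1_2}\le C\bigl(\|\varphi\|_{L^2}+\|\psi\|_{L^2}\bigr)\le C'\|\varphi\|_{L^2}\le C''\|\varphi\|_{L^p},\]
where the last inequality uses the embedding $L^p(M)\hookrightarrow L^2(M)$ on the compact manifold $M$. Sobolev embedding gives $W^1_2\hookrightarrow L^{q_1}$ for some $q_1>2$ (depending only on $m$), hence $\|\psi\|_{L^{q_1}}\le C\|\varphi\|_{L^p}$. Iterating the scheme (elliptic regularity $\Gamma_{L^{r_k}}\to\Gamma_{W^1_{r_k}}$ followed by Sobolev embedding $W^1_{r_k}\hookrightarrow L^{q_{k+1}}$, with $r_k:=\min(q_k,p)$) strictly increases the integrability exponent at each step; after finitely many iterations one reaches $q_k\ge p$, and a final application of Lemma \ref{lemma ellipticregnonsmooth} at exponent $p$ yields the desired uniform bound $\|\psi\|_{W^1_p}\le C(p,R)\|\varphi\|_{L^p}$.

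The main obstacle is Step 2: Lemma \ref{lemma ellipticregnonsmooth} is stated with a constant that may depend arbitrarily on $f$, so one must carefully inspect its proof to confirm that the dependence goes only through a $C^1$-type norm of $f$ (together with the fixed smooth reference map $g$), which is uniformly controlled on our ball. Once this uniformity is secured, Steps 1 and 3 are standard.
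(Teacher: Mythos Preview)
Your approach is correct and takes a genuinely different route from the paper's. The paper argues by perturbation on a \emph{fixed} bundle: it conjugates every $\slashed{D}^{\pi\circ u_t}$ by $P^{u_t,u_0}$ to obtain operators on $\Sigma M\otimes u_0^*TN$, uses Lemma~\ref{lemma Kruemmungsargument} to show $\|P^{u_t,u_0}\slashed{D}^{\pi\circ u_t}(P^{u_t,u_0})^{-1}-\slashed{D}^{u_0}\|_{L(W^1_p,L^p)}\le C(R)\cdot 2R$, and then for $R$ small enough invokes a Neumann-series argument to bound their resolvents by a fixed multiple of $\|R(\mu,\slashed{D}^{u_0})\|_{L(L^p,W^1_p)}$; Corollary~\ref{cor parallel bdd w1p} then transfers the bound back to $R(\mu,\slashed{D}^{\pi\circ u_t})$. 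The point is that elliptic regularity (Lemma~\ref{lemma ellipticregnonsmooth}) enters only once, for the single fixed map $u_0$, so the paper never has to reopen that lemma or track how its constant depends on $f$. Your route trades this for the (true, but to-be-verified) uniformity of your Step~2; in return you get an argument that, once Step~2 is secured, consumes only the spectral gap and a uniform $C^1$-bound, and would therefore apply to any family of maps with those two controls rather than only to $C^0$-small perturbations of $u_0$.
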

\begin{proof}First we uniformly bound
	\[\sup_{|\mu|=\frac{\Lambda}{2}}\|R(\mu,P^{u_t,u_0}\slashed{D}^{\pi\circ u_t}(P^{u_t,u_0})^{-1})\|_{L(L^p,W^1_p)}\]
	in terms of the resolvent of $\slashed{D}^{u_0}$. To that end, let $\psi\in \Gamma_{C^1}(\Sigma M \otimes u_0^*TN)$ be arbitrary. Using Lemma \ref{lemma Kruemmungsargument} and \eqref{eq z11} we have
	\begin{align*}
	&\|P^{u_t,u_0}\slashed{D}^{\pi\circ u_t}(P^{u_t,u_0})^{-1}\psi-\slashed{D}^{u_0}\psi\|_{L^p}\le C(R)2R\|\psi\|_{W^1_p}
	\end{align*}
	Choosing any $\theta\in (0,1)$ we thus have
	\begin{align*}
	\|P^{u_t,u_0}\slashed{D}^{\pi\circ u_t}(P^{u_t,u_0})^{-1}-\slashed{D}^{u_0}\|_{L(W^1_p,L^p)}\le \theta \min_{|\mu|=\frac{\Lambda}{2}}\frac{1}{\|R(\mu,\slashed{D}^{u_0})\|_{L(L^p,W^1_p)}}
	\end{align*}
	for all $u,v\in B_R^T(v_0)\cap\{u|_{t=0}=u_0\}$, $t\in[0,T]$, $R>0$ small enough. It is a standard fact from functional analysis that this implies
	\begin{align*}
	\|R(\mu,P^{u_t,u_0}\slashed{D}^{\pi\circ u_t}(P^{u_t,u_0})^{-1}\|_{L(L^p,W^1_p)}\le \frac{1}{1-\theta}\|R(\mu,\slashed{D}^{u_0})\|_{L(L^p,W^1_p)}
	\end{align*}
	for all $u,v\in B_R^T(v_0)\cap\{u|_{t=0}=u_0\}$, $t\in[0,T]$, $|\mu|=\frac{\Lambda}{2}$, $R>0$ small enough. The lemma now follows from the uniform bounds for $\|(P^{u_t,u_0})^{-1}\|_{L(W^1_p,W^1_p)}$ obtained in Corollary \ref{cor parallel bdd w1p}.
\end{proof}

In the following, we will construct a particular solution of the constraint equation \eqref{dhmhf2} with the strategy outlined in the introduction. For this solution we will show the estimates which are necessary for the contraction argument.

\begin{lemma}\label{lemma Anteil im Kern nicht null}In the situation of Lemma \ref{lemma kern lokal konstant} let $\psi_0\in\textup{ker}(\slashed{D}^{u_0})$ with $\|\psi_0\|_{L^2}=1$.  We define
	\[\sigma(u_t):=P^{u_0,u_t}\psi_0.\]
	Using the decomposition $\Gamma_{L^2}=\textup{ker}(\slashed{D}^{\pi\circ u_t})\oplus(\textup{ker}(\slashed{D}^{\pi\circ u_t}))^\bot$ we write
	\[\sigma(u_t)=\sigma_1(u_t)+\sigma_2(u_t).\]
	Then it holds that
	\begin{align}\label{Gleichung z4}
	\sqrt{\frac12}\le \|\sigma_1(u_t)\|_{L^2(M)}\le 1
	\end{align}
	for all $u\in B^T_R(v_0)\cap \{u|_{t=0}=u_0\}$, $t\in[0,T]$. In particular $\sigma_1(u_t)\neq 0$.
\end{lemma}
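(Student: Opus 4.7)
The upper bound is immediate. Parallel transport $P^{u_0,u_t}$ acts as the identity on the $\Sigma M$-factor and by $TN$-parallel transport (an isometry) on the bundle factor, so by Lemma \ref{lemma bundle metric inv. unit quat} and fiberwise isometry it is an $L^2$-isometry; hence $\|\sigma(u_t)\|_{L^2}=\|\psi_0\|_{L^2}=1$, and since $\sigma_1$ is an orthogonal projection of $\sigma$, we get $\|\sigma_1(u_t)\|_{L^2}\le 1$.

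For the lower bound I would exploit the spectral gap of $\slashed{D}^{\pi\circ u_t}$. By Lemma \ref{lemma kern lokal konstant} we have $\textup{spec}(\slashed{D}^{\pi\circ u_t})\setminus\{0\}\subset\mathbb{R}\setminus(-\Lambda,\Lambda)$, and by Corollary \ref{cor spectral properties} the operator $\slashed{D}^{\pi\circ u_t}$ is self-adjoint with pure point spectrum. Applying the spectral theorem to the component $\sigma_2(u_t)$ lying in the orthogonal complement of $\textup{ker}(\slashed{D}^{\pi\circ u_t})$ yields
\[\|\slashed{D}^{\pi\circ u_t}\sigma_2(u_t)\|_{L^2}\ge \Lambda\,\|\sigma_2(u_t)\|_{L^2}.\]
Since $\sigma_1(u_t)\in\ker(\slashed{D}^{\pi\circ u_t})$, the left-hand side equals $\|\slashed{D}^{\pi\circ u_t}\sigma(u_t)\|_{L^2}$.

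Next I would bound this latter quantity using Lemma \ref{lemma Kruemmungsargument} applied with $v=u$, $s=0$ (so $\pi\circ v_0=u_0$, $P^{v_0,u_t}=P^{u_0,u_t}$) and $\psi=\psi_0$. Squaring the pointwise estimate and integrating over $M$, then using that $P^{u_0,u_t}$ is a fiberwise isometry and $\slashed{D}^{u_0}\psi_0=0$, gives
\[\|\slashed{D}^{\pi\circ u_t}\sigma(u_t)\|_{L^2}=\|(P^{u_0,u_t})^{-1}\slashed{D}^{\pi\circ u_t}P^{u_0,u_t}\psi_0-\slashed{D}^{u_0}\psi_0\|_{L^2}\le C(R)\,\|u_t-u_0\|_{C^0(M,\mathbb{R}^q)}\,\|\psi_0\|_{L^2}.\]
By the triangle inequality and \eqref{eq z11} together with $u\in B^T_R(v_0)$, one has $\|u_t-u_0\|_{C^0(M,\mathbb{R}^q)}\le 2R$, so altogether
\[\Lambda\,\|\sigma_2(u_t)\|_{L^2}\le 2C(R)R.\]

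Finally, I would shrink $R$ (further than in Lemma \ref{lemma kern lokal konstant}, which is allowed since $\Lambda=\Lambda(R)$ can only decrease monotonically as $R$ decreases, or more carefully: the estimates in Lemma \ref{lemma kern lokal konstant} show $\Lambda$ may be fixed once $R$ is small enough) so that $2C(R)R/\Lambda\le \sqrt{1/2}$. Then $\|\sigma_2(u_t)\|_{L^2}^2\le 1/2$ and orthogonality gives
\[\|\sigma_1(u_t)\|_{L^2}^2=\|\sigma(u_t)\|_{L^2}^2-\|\sigma_2(u_t)\|_{L^2}^2\ge 1-\tfrac12=\tfrac12,\]
which is the desired bound. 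The only mildly subtle point is keeping the dependency of the constants on $R$ coherent, since $\Lambda$ itself was produced by Lemma \ref{lemma kern lokal konstant}; this is handled by first fixing $R_0$ and $\Lambda_0$ from that lemma and then shrinking $R\le R_0$ further in the last step.
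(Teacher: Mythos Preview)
Your argument is correct and follows essentially the same route as the paper: bound $\|\slashed{D}^{\pi\circ u_t}\sigma\|_{L^2}$ via Lemma~\ref{lemma Kruemmungsargument}, bound it from below by $\Lambda\|\sigma_2\|_{L^2}$ via the spectral gap of Lemma~\ref{lemma kern lokal konstant}, and conclude using $\|\sigma_1\|^2+\|\sigma_2\|^2=1$.

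Two small remarks. First, the citation of Lemma~\ref{lemma bundle metric inv. unit quat} for the isometry of $P^{u_0,u_t}$ is misplaced; that lemma concerns invariance under unit quaternions, not parallel transport. The fiberwise isometry of $P^{u_0,u_t}$ is simply the standard fact that parallel transport for a metric connection preserves the metric. Second, you do not actually need to shrink $R$ beyond what Lemma~\ref{lemma kern lokal konstant} already requires: in its proof the condition $C(R)R<\tfrac{1}{2}\Lambda$ is imposed, and plugging this directly into your inequality $\Lambda\|\sigma_2\|_{L^2}\le C(R)R\cdot\|\psi_0\|_{L^2}$ (the constant $2$ can be absorbed) gives $\|\sigma_2\|_{L^2}<\tfrac{1}{2}$, hence $\|\sigma_1\|_{L^2}^2>\tfrac{3}{4}>\tfrac{1}{2}$. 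The paper exploits exactly this, so the ``mildly subtle point'' about coherent dependence on $R$ evaporates.
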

\begin{proof} We write $\sigma:= \sigma(u_t)$ and $\sigma_i:=\sigma_i(u_t)$. By \eqref{eq 111} we have
	\begin{align*}
	\|\slashed{D}^{\pi\circ u_t}(\sigma_1+\sigma_2)\|_{L^2}^2&\le C(R)R\|\psi_0\|_{L^2}^2<\frac12\Lambda \|\psi_0\|_{L^2}^2=\frac12\Lambda (\|\sigma_1\|_{L^2}^2+\|\sigma_2\|_{L^2}^2)
	\end{align*}
	(the second inequality is just due to our choice of $R$ in the proof of Lemma \ref{lemma kern lokal konstant}).	Moreover the Min-Max principle yields
	\begin{align*}
	\|\slashed{D}^{\pi\circ u_t}(\sigma_1+\sigma_2)\|_{L^2}^2=\|\slashed{D}^{\pi\circ u_t}\sigma_2\|_{L^2}^2\ge \Lambda\|\sigma_2\|_{L^2}^2.
	\end{align*}
	Combining these two inequalities, we get
	\[\Lambda\|\sigma_2\|_{L^2}^2<\frac12\Lambda(\|\sigma_1\|_{L^2}^2+\|\sigma_2\|_{L^2}^2),\]
	hence
	\[\|\sigma_2\|_{L^2}^2<\|\sigma_1\|_{L^2}^2\]
	and the lemma follows.
\end{proof}

Let us assume that we are in the situation of Lemma \ref{lemma Anteil im Kern nicht null}. Let $\Lambda>0$ be as in Lemma \ref{lemma kern lokal konstant}. Let $\gamma\colon [0,2\pi]\rightarrow \mathbb{C}$ be defined by $\gamma(x):=\frac{\Lambda}{2}e^{i x }$. Then for every $u\in B_R^T(v_0)\cap\{u|_{t=0}=u_0\}$ the mapping
\begin{align*}
\Gamma_{L^2}(\Sigma M\otimes (\pi\circ u_t)^*TN)&\to \Gamma_{L^2}(\Sigma M\otimes (\pi\circ u_t)^*TN),\\
s&\mapsto -\frac{1}{2\pi i}\int_{\gamma}R(\mu,\slashed{D}^{\pi\circ u_t})s\,d\mu,
\end{align*}
where $R(\mu,\slashed{D}^{\pi\circ u_t})\colon \Gamma_{L^2}\to \Gamma_{L^2}$ is the resolvent of $\slashed{D}^{\pi\circ u_t}\colon \Gamma_{W^1_2}\to \Gamma_{L^2}$, is the orthogonal projection onto $\textup{ker}(\slashed{D}^{\pi\circ u_t})$, c.f. \cite[Theorem 6.17 on p. 178]{Kato} or \cite[Theorem A.4.5. and Corollary A.4.6. i), ii), iii)]{Now}.

\begin{lemma}\label{lemma zentrale Spinorabschaetzung}In the situation of Lemma \ref{lemma Anteil im Kern nicht null} we define
	\[\tilde{\psi}(u_t):=-\frac{1}{2\pi i}\int_{\gamma}R(\mu,\slashed{D}^{\pi\circ u_t})\sigma(u_t)\,d\mu\]
	for every $u\in B_R^T(v_0)\cap\{u|_{t=0}=u_0\}$. In particular $\tilde{\psi}(u_t)\in\textup{ker}(\slashed{D}^{\pi\circ u_t})\subset\Gamma_{C^0}(\Sigma M\otimes (\pi\circ u_t)^*TN)$ and $\tilde{\psi}(u_t)\neq 0$. We write
	\[\psi(u_t):=\frac{\tilde{\psi}(u_t)}{\|\tilde{\psi}(u_t)\|_{L^2(M)}}.\]
	Let $\psi^A(u_t)$, $A=1,\ldots,q$, be the uniquely determined (global) sections of $\Sigma M$ s.t.
	\[\psi(u_t)=\psi^A(u_t)\otimes(\partial_A\circ \pi\circ u_t).\]
	If $\varepsilon >0$ and $T>0$ are small enough, then there exists $C=C(R,\varepsilon,\psi_0)>0$ s.t.
	\begin{align}\label{eq z22}
	\|P^{u_t,v_s}\tilde{\psi}(u_t)(x)-\tilde{\psi}(v_s)(x)\|\le C\|u_t-v_s\|_{C^0(M,\mathbb{R}^q)},
	\end{align}
	and
	\begin{align}\label{eq zz22}
	\|\psi^A(u_t)(x)-\psi^A(v_s)(x)\|_{\Sigma_xM}\le C \|u_t-v_s\|_{C^0(M,\mathbb{R}^q)}
	\end{align}
	for all $u,v\in B_R^T(v_0)\cap \{u|_{t=0}=u_0\}$, $A=1,\ldots,q$, $x\in M$, $s,t\in[0,T]$.
\end{lemma}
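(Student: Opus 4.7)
The plan is to exploit the resolvent-integral representation of $\tilde{\psi}(u_t)$, move everything onto the bundle $\Sigma M\otimes(\pi\circ v_s)^*TN$ by conjugation with $P^{u_t,v_s}$, and then control the difference via the second resolvent identity. Since parallel transport commutes with holomorphic functional calculus,
\[P^{u_t,v_s}\tilde{\psi}(u_t)=-\frac{1}{2\pi i}\int_\gamma R(\mu,A)\big(P^{u_t,v_s}\sigma(u_t)\big)\,d\mu,\qquad A:=P^{u_t,v_s}\slashed{D}^{\pi\circ u_t}(P^{u_t,v_s})^{-1},\]
and subtracting $\tilde{\psi}(v_s)=-\tfrac{1}{2\pi i}\int_\gamma R(\mu,B)\sigma(v_s)\,d\mu$ with $B:=\slashed{D}^{\pi\circ v_s}$ splits the difference into a \emph{source part} with integrand $R(\mu,A)\bigl(P^{u_t,v_s}\sigma(u_t)-\sigma(v_s)\bigr)$ and a \emph{resolvent part} with integrand $\bigl(R(\mu,A)-R(\mu,B)\bigr)\sigma(v_s)$. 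The first is handled by rewriting $P^{u_t,v_s}\sigma(u_t)-\sigma(v_s)=(P^{v_s,u_0})^{-1}\bigl(P^{v_s,u_0}P^{u_t,v_s}P^{u_0,u_t}-\mathrm{Id}\bigr)\psi_0$ and invoking Lemma \ref{lemma parallel transport deviation}. The second is handled via the resolvent identity $R(\mu,A)-R(\mu,B)=R(\mu,A)(A-B)R(\mu,B)$, together with Lemma \ref{lemma Kruemmungsargument}, which applies directly since $A=(P^{v_s,u_t})^{-1}\slashed{D}^{\pi\circ u_t}P^{v_s,u_t}$.

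Combining both pieces with the uniform resolvent bounds $\|R(\mu,\cdot)\|_{L(L^p,W^1_p)}\le C(R,p)$ from Lemma \ref{lemma resolvents uniform bounds} (and noting $\|\sigma(v_s)\|_{L^p}=\|\psi_0\|_{L^p}$ is a fixed constant, since parallel transport is an isometry) yields $\|P^{u_t,v_s}\tilde{\psi}(u_t)-\tilde{\psi}(v_s)\|_{W^1_p}\le C\|u_t-v_s\|_{C^0(M,\mathbb{R}^q)}$; Sobolev embedding with any $p>m$ then delivers \eqref{eq z22}. To deduce \eqref{eq zz22}, Lemma \ref{lemma Anteil im Kern nicht null} gives $\|\tilde{\psi}(u_t)\|_{L^2}\in[\sqrt{1/2},1]$, so a standard quotient estimate transfers \eqref{eq z22} from $\tilde{\psi}$ to $\psi=\tilde{\psi}/\|\tilde{\psi}\|_{L^2}$, producing $\|P^{u_t,v_s}\psi(u_t)(x)-\psi(v_s)(x)\|\le C\|u_t-v_s\|_{C^0}$ pointwise. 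I then view all sections in the trivial ambient bundle $(\Sigma M)^{\oplus q}=\Sigma M\otimes(M\times\mathbb{R}^q)$, in which $\psi=\psi^A\otimes e_A$ and $\sum_A\|\psi^A(u_t)-\psi^A(v_s)\|^2=\|\psi(u_t)-\psi(v_s)\|^2$ pointwise, and split
\[\psi(u_t)-\psi(v_s)=\bigl(\psi(u_t)-P^{u_t,v_s}\psi(u_t)\bigr)+\bigl(P^{u_t,v_s}\psi(u_t)-\psi(v_s)\bigr).\]
The second summand is already controlled; the first measures how much $TN$-parallel transport deviates from the ambient $\mathbb{R}^q$-identity, and is bounded by $C\,d^N((\pi\circ u_t)(x),(\pi\circ v_s)(x))\|\psi(u_t)(x)\|$ via the Gauss formula $\nabla^{\mathbb{R}^q}=\nabla^N+II$. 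Lemma \ref{lemma abstandsvgl umf} then converts the $N$-distance to $\|u_t-v_s\|_{C^0}$, and a uniform $C^0$-bound on $\psi(u_t)$ (from Sobolev embedding applied to the kernel regularity established in Lemma \ref{lemma ellipticregnonsmooth}) finishes \eqref{eq zz22}.

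The main obstacle is ensuring that every constant remains uniform in $u,v\in B_R^T(v_0)$ all the way along the contour $\gamma$: the $L^p\to W^1_p$ operator norms of the two resolvents, the $W^1_p$ operator norms of the parallel transports, and the $L^2$-lower bound on $\tilde{\psi}$. All three have already been arranged in Lemma \ref{lemma resolvents uniform bounds}, Corollary \ref{cor parallel bdd w1p}, and Lemma \ref{lemma Anteil im Kern nicht null}, so the remaining difficulty is largely bookkeeping. A smaller but genuine subtlety is that \eqref{eq zz22} is not a direct corollary of \eqref{eq z22}: the components $\psi^A$ are read off in the trivialization by the ambient $\partial_A$'s, which forces the additional Gauss-formula step above to reconcile $TN$-parallel transport with the $\mathbb{R}^q$-identity.
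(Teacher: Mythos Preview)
Your argument is correct and follows essentially the same route as the paper: conjugate by $P^{u_t,v_s}$, split via the second resolvent identity, control the source term by Lemma~\ref{lemma parallel transport deviation} and the operator term by Lemma~\ref{lemma Kruemmungsargument}, apply the uniform resolvent bounds of Lemma~\ref{lemma resolvents uniform bounds} together with Corollary~\ref{cor parallel bdd w1p}, and then pass to $C^0$ by Sobolev embedding. The only cosmetic difference is the order in which you handle the normalization and the ambient-$\mathbb{R}^q$ comparison for \eqref{eq zz22}: the paper first proves the component estimate for the unnormalized $\tilde{\psi}^A$ via the second-fundamental-form argument and only then divides by $\|\tilde{\psi}\|_{L^2}$, whereas you normalize first and apply the Gauss step afterwards; both orderings work. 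One small remark: the uniform $C^0$-bound on $\psi(u_t)$ that you invoke does not come directly from Lemma~\ref{lemma ellipticregnonsmooth} (whose constant depends on the map), but rather from the resolvent-integral representation together with Lemma~\ref{lemma resolvents uniform bounds} and Sobolev embedding---equivalently, from \eqref{eq z22} with $v_s=u_0$ and the triangle inequality.
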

\begin{proof} We define $\tilde{\psi}^A(u_t)\in\Gamma(\Sigma M)$ by
	\[\tilde{\psi}(u_t)=\tilde{\psi}^A(u_t)\otimes(\partial_A\circ \pi\circ u_t).\]
	We will prove the lemma in three steps. First we show \eqref{eq z22}. Then we use \eqref{eq z22} to get
	\begin{align}\label{eq z23}
	\|\tilde{\psi}^A(u_t)(x)-\tilde{\psi}^A(v_s)(x)\|_{\Sigma_xM}\le C(R,\varepsilon,\psi_0) \|u_t-v_s\|_{C^0(M,\mathbb{R}^q)}
	\end{align}
	for all $u,v\in B_R^T(v_0)\cap \{u|_{t=0}=u_0\}$, $A=1,\ldots,q$, $x\in M$, $s,t\in[0,T]$.
	From \eqref{eq z22} and \eqref{eq z23} the equation \eqref{eq zz22} will follow from a short computation.
	
	\textbf{Step 1: Proof of \eqref{eq z22}:} In the following we use the well-known resolvent identity  
	\[R(\lambda, T)-R(\lambda,T_0)=R(\lambda,T)\circ (T-T_0)\circ R(\lambda,T_0),\]
	where $T,T_0\colon D(T)\subset X\to X$ are two operators on a Banach space $X$ with the same domain of definition and $\lambda$ is in the intersection of their resolvent sets. We calculate
	\begin{align*}
	&P^{u_t,v_s}\tilde{\psi}(u_t)-\tilde{\psi}(v_s)\\
	&=-\frac{1}{2\pi i}\Big(\int_{\gamma}P^{u_t,v_s}R(\mu,\slashed{D}^{\pi\circ u_t})(P^{u_t,v_s}\big)^{-1}P^{u_t,v_s}P^{u_0,u_t}\psi_0\,d\mu\\
	&\hphantom{=-\frac{1}{2\pi i}\Big(}-\int_{\gamma}R(\mu,\slashed{D}^{\pi\circ v_s})P^{u_0,v_s}\psi_0\,d\mu\Big)\\
	&=-\frac{1}{2\pi i}\Big(\int_{\gamma}R(\mu,P^{u_t,v_s}\slashed{D}^{\pi\circ u_t}(P^{u_t,v_s}\big)^{-1})P^{u_t,v_s}P^{u_0,u_t}\psi_0\,d\mu\\
	&\hphantom{=-\frac{1}{2\pi i}\Big(}-\int_{\gamma}R(\mu,\slashed{D}^{\pi\circ v_s})P^{u_0,v_s}\psi_0\,d\mu\Big)\\
	&=-\frac{1}{2\pi i}\int_{\gamma}R(\mu,P^{u_t,v_s}\slashed{D}^{\pi\circ u_t}(P^{u_t,v_s}\big)^{-1})\bigg(P^{u_t,v_s}P^{u_0,u_t}\psi_0-P^{u_0,v_s}\psi_0\bigg)\,d\mu\\
	&\hphantom{=}-\frac{1}{2\pi i}\int_{\gamma}\bigg(R(\mu,P^{u_t,v_s}\slashed{D}^{\pi\circ u_t}(P^{u_t,v_s}\big)^{-1})-R(\mu,\slashed{D}^{\pi\circ v_s})\bigg)P^{u_0,v_s}\psi_0\,d\mu\\
	&=-\frac{1}{2\pi i}\int_{\gamma}R(\mu,P^{u_t,v_s}\slashed{D}^{\pi\circ u_t}(P^{u_t,v_s}\big)^{-1})\bigg(P^{u_t,v_s}P^{u_0,u_t}\psi_0-P^{u_0,v_s}\psi_0\bigg)\,d\mu\\
	&\hphantom{=}-\frac{1}{2\pi i}\int_{\gamma}\bigg(R(\mu,P^{u_t,v_s}\slashed{D}^{\pi\circ u_t}(P^{u_t,v_s}\big)^{-1}) \circ\Big(P^{u_t,v_s}\slashed{D}^{\pi\circ u_t}(P^{u_t,v_s}\big)^{-1}-\slashed{D}^{\pi\circ v_s}\Big)\circ\\
	&\hphantom{\hphantom{=}-\frac{1}{2\pi i}\int_{\gamma}\bigg(}R(\mu,\slashed{D}^{\pi\circ v_s})\bigg)P^{u_0,v_s}\psi_0\,d\mu.
	\end{align*}
	
	Therefore we get for $p$ large enough
	\begin{align*}
	&\|P^{u_t,v_s}\tilde{\psi}(u_t)(x)-\tilde{\psi}(v_s)(x)\|\\
	&= \|P^{v_s,u_0}P^{u_t,v_s}\tilde{\psi}(u_t)-P^{v_s,u_0}\tilde{\psi}(v_s)\|_{C^0(M)}\\
	&\le C(u_0)\|P^{v_s,u_0}P^{u_t,v_s}\tilde{\psi}(u_t)-P^{v_s,u_0}\tilde{\psi}(v_s)\|_{W^1_p(M)}\\
	&\le C_1\|P^{u_t,v_s}\tilde{\psi}(u_t)-\tilde{\psi}(v_s)\|_{W^1_p(M)}\\
	&\le C_2\int_{\gamma}\bigg\|R(\mu,P^{u_t,v_s}\slashed{D}^{\pi\circ u_t}(P^{u_t,v_s}\big)^{-1})\bigg(P^{u_t,v_s}P^{u_0,u_t}\psi_0-P^{u_0,v_s}\psi_0\bigg)\bigg\|_{W^1_p(M)}\,d\mu\\
	&+C_2\int_{\gamma}\bigg\|\bigg(R(\mu,P^{u_t,v_s}\slashed{D}^{\pi\circ u_t}(P^{u_t,v_s}\big)^{-1}) \circ\Big(P^{u_t,v_s}\slashed{D}^{\pi\circ u_t}(P^{u_t,v_s}\big)^{-1}-\slashed{D}^{\pi\circ v_s}\Big)\circ\\
	&\hphantom{\hphantom{=}-\frac{1}{2\pi i}\int_{\gamma}\bigg(}R(\mu,\slashed{D}^{\pi\circ v_s})\bigg)P^{u_0,v_s}\psi_0\bigg\|_{W^1_p(M)}\,d\mu\\
	&\le C_3 \sup_{\mu\in\textup{Im}(\gamma)}\|R(\mu,P^{u_t,v_s}\slashed{D}^{\pi\circ u_t}(P^{u_t,v_s}\big)^{-1})\|_{L(L^p,W^1_p)}\|P^{u_t,v_s}P^{u_0,u_t}\psi_0-P^{u_0,v_s}\psi_0\|_{L^p}\\
	&\hphantom{\le} + C_3\sup_{\mu\in\textup{Im}(\gamma)}\|R(\mu,P^{u_t,v_s}\slashed{D}^{\pi\circ u_t}(P^{u_t,v_s}\big)^{-1})\|_{L(L^p,W^1_p)}\sup_{\mu\in\textup{Im}(\gamma)}\|R(\mu,\slashed{D}^{\pi\circ v_s})\|_{L(L^p,W^1_p)}\\
	&\hphantom{+C_3}\|P^{u_t,v_s}\slashed{D}^{\pi\circ u_t}(P^{u_t,v_s}\big)^{-1}-\slashed{D}^{\pi\circ v_s}\|_{L(W^1_p,L^p)}\|P^{u_0,v_s}\psi_0\|_{L^p}
	\end{align*}
	
	The norms of the resolvents are uniformly bounded by Lemma \ref{lemma resolvents uniform bounds} and Corollary \ref{cor parallel bdd w1p}. Moreover, Lemma \ref{lemma Kruemmungsargument} yields
	\begin{align*}
	\|P^{u_t,v_s}\slashed{D}^{\pi\circ u_t}(P^{u_t,v_s}\big)^{-1}-\slashed{D}^{\pi\circ v_s}\|_{L(W^1_p,L^p)}\ \le C(R) \|u_t-v_s\|_{C^0(M,\mathbb{R}^q)}.
	\end{align*}
	Using Lemma \ref{lemma parallel transport deviation} we get
	\[\|P^{u_t,v_s}P^{u_0,u_t}\psi_0-P^{u_0,v_s}\psi_0\|_{L^p}\le C(\varepsilon,\psi_0)\|u_t-v_s\|_{C^0(M,\mathbb{R}^q)}.\]
	Putting everything together we have shown \eqref{eq z22}.

	\textbf{Step 2: Proof of \eqref{eq z23}:} We have
	\begin{align*}
	&\|\tilde{\psi}^A(u_t)|_x-\tilde{\psi}^A(v_s)|_x\|_{\Sigma_xM}\\
	&\le\|\tilde{\psi}(u_t)|_x-\tilde{\psi}(v_s)|_x\|_{\Sigma_xM\otimes\mathbb{R}^q}\\
	&\le \|P^{u_t,v_s}\tilde{\psi}(u_t)|_x-\tilde{\psi}(v_s)|_x\|_{\Sigma_xM\otimes\mathbb{R}^q} + \|P^{u_t,v_s}\tilde{\psi}(u_t)|_x-\tilde{\psi}(u_t)|_x\|_{\Sigma_xM\otimes\mathbb{R}^q}\\
	\end{align*}
	In the following we estimate the two summands separately. Using the fact that the differential of $i\colon N\rightarrow\mathbb{R}^q$ is an isometry and \eqref{eq z22} we get
	\begin{align*}
	\|P^{u_t,v_s}\tilde{\psi}(u_t)|_x-\tilde{\psi}(v_s)|_x\|_{\Sigma_xM\otimes\mathbb{R}^q}&=\|P^{u_t,v_s}\tilde{\psi}(u_t)|_x-\tilde{\psi}(v_s)|_x\|_{\Sigma_xM\otimes T_{(\pi\circ v_s(x))}N}\\
	&\le C(R,\varepsilon, \psi_0)\|u_t-v_s\|_{C^0(M,\mathbb{R}^q)}.
	\end{align*}
	It remains to find an appropriate estimate for $\|P^{u_t,v_s}\tilde{\psi}(u_t)|_x-\tilde{\psi}(u_t)|_x\|_{\Sigma_xM\otimes\mathbb{R}^q}$. To that end, let $\gamma(h):=\textup{exp}_{(\pi\circ u_t)(x)}(h\textup{exp}^{-1}_{(\pi\circ u_t)(x)}(\pi\circ v_s(x))$, $h\in[0,1]$, be the unique shortest geodesic of $N$ from $(\pi\circ u_t)(x)$ to $(\pi\circ v_s)(x)$. Let $X\in T_{\gamma(0)}N$ be given and denote by $X(h)$ the unique parallel vector field (of $N$) along $\gamma$ with $X(0)=X$. Then we have
	\begin{align*}
	P^{u_t,v_s}X-X=X(1)-X(0)=\int_{0}^{1}\frac{d}{dh}\bigg|_{h=\tau}X(h)\,d\tau=\int_{0}^{1}II(\gamma'(h),X(h))\, dh.
	\end{align*}
	Therefore
	\begin{align*}
	\|P^{u_t,v_s}X-X\|_{\mathbb{R}^q}\le C_1 \sup_{h\in[0,1]}\|\gamma'(h)\|_N \sup_{h\in[0,1]}\|X(h)\|_N=C_1\|\gamma'(0)\|_N\|X\|_N
	\end{align*}
	where $C_1$ only depends on the Riemannian manifold $N\subset\mathbb{R}^q$. Using \eqref{eq z4} and the fact that $\pi\colon \mathbb{R}^q\rightarrow\mathbb{R}^q$ is (globally) Lipschitz continuous we have that
	\begin{align*}
	\|\gamma'(0)\|_N\le d^N((\pi\circ u_t)(x),(\pi\circ v_s)(x))\le C_2 \|u_t(x)-v_s(x)\|_{\mathbb{R}^q}.
	\end{align*}
	Hence,
	\begin{align*}
	\|P^{u_t,v_s}X-X\|_{\mathbb{R}^q}\le C_3\|u_t(x)-v_s(x)\|_{\mathbb{R}^q}\|X\|_N.
	\end{align*}
	This implies
	\begin{align*}
	\|P^{u_t,v_s}\tilde{\psi}(u_t)|_x-\tilde{\psi}(u_t)|_x\|_{\Sigma_xM\otimes\mathbb{R}^q}\le C(R,\varepsilon,\psi_0)\|u_t(x)-v_s(x)\|_{\mathbb{R}^q},
	\end{align*}
	hence \eqref{eq z23} holds.
	
	\textbf{Step 3: Proof of \eqref{eq zz22}:} We have
	\[\psi^A(u_t)(x)=\frac{\tilde{\psi}^A(u_t)(x)}{\|\tilde{\psi}(u_t)\|_{L^2(M)}}.\] 
	Using \eqref{eq z22} and \eqref{eq z23} we get
	\begin{align*}
	&\|\psi^A(u_t)(x)-\psi^A(v_s)(x)\|\\
	&=\bigg\|\frac{\tilde{\psi}^A(u_t)(x)}{\|\tilde{\psi}(u_t)\|_{L^2(M)}}-\frac{\tilde{\psi}^A(u_t)(x)}{\|\tilde{\psi}(v_s)\|_{L^2(M)}}+\frac{\tilde{\psi}^A(u_t)(x)}{\|\tilde{\psi}(v_s)\|_{L^2(M)}}-\frac{\tilde{\psi}^A(v_s)(x)}{\|\tilde{\psi}(v_s)\|_{L^2(M)}}\bigg\|\\
	&\le\frac{\|\tilde{\psi}^A(u_t)(x)\|}{\|\tilde{\psi}(u_t)\|_{L^2(M)}\|\tilde{\psi}(v_s)\|_{L^2(M)}}\Big|\|\tilde{\psi}(v_s)\|_{L^2(M)}-\|\tilde{\psi}(u_t)\|_{L^2(M)}\Big|\\
	&\hphantom{\le}+ \frac{1}{\|\tilde{\psi}(v_s)\|_{L^2(M)}}\|\tilde{\psi}^A(u_t)(x)-\tilde{\psi}^A(v_s)(x)\|\\
	&=\frac{\|\tilde{\psi}^A(u_t)(x)\|}{\|\tilde{\psi}(u_t)\|_{L^2(M)}\|\tilde{\psi}(v_s)\|_{L^2(M)}}\Big|\|\tilde{\psi}(v_s)\|_{L^2(M)}-\|P^{u_t,v_s}\tilde{\psi}(u_t)\|_{L^2(M)}\Big|\\
	&\hphantom{\le}+ \frac{1}{\|\tilde{\psi}(v_s)\|_{L^2(M)}}\|\tilde{\psi}^A(u_t)(x)-\tilde{\psi}^A(v_s)(x)\|\\
	&\le\frac{\|\tilde{\psi}^A(u_t)(x)\|}{\|\tilde{\psi}(u_t)\|_{L^2(M)}\|\tilde{\psi}(v_s)\|_{L^2(M)}}\|P^{u_t,v_s}\tilde{\psi}(u_t)-\tilde{\psi}(v_s)\|_{L^2(M)}\\
	&\hphantom{\le}+ \frac{1}{\|\tilde{\psi}(v_s)\|_{L^2(M)}}\|\tilde{\psi}^A(u_t)(x)-\tilde{\psi}^A(v_s)(x)\|\\
	&\le\Big( \frac{\|\tilde{\psi}^A(u_t)(x)\|}{\|\tilde{\psi}(u_t)\|_{L^2(M)}\|\tilde{\psi}(v_s)\|_{L^2(M)}}+\frac{1}{\|\tilde{\psi}(v_s)\|_{L^2(M)}}\Big)C(R,\varepsilon,\psi_0) \|u_t-v_s\|_{C^0(M,\mathbb{R}^q)}.
	\end{align*}
	Moreover, the $L^2$-norms in the denominators are uniformly bounded by Lemma \ref{lemma Anteil im Kern nicht null} and $\|\tilde{\psi}^A(u_t)(x)\|$ is uniformly bounded by \eqref{eq z23} and the triangle inequality.	This completes the proof of the lemma.
\end{proof}
\newpage

\section{Short time existence}
In this section we prove Theorem \ref{thereom short time existence}. As we already mentioned in the introduction, the proof is inspired by \cite{ChenJostSunZhu}. A contraction argument with a similar structure can be found in \cite[Proof of Theorem 5.2.1 on page 111]{LinWang}. For the latter we also recommend \cite{LinWangsupp} as a supplement.

\begin{proof}[Proof of Theorem \ref{thereom short time existence}]
	\textbf{Step 1: Solving the equation in $\mathbb{R}^q$:}
	In this step we want to find a solution $u\colon [0,T]\times M \rightarrow \mathbb{R}^q$, $\psi_t\colon M\rightarrow \Sigma M\otimes (\pi\circ u_t)^*TN$ of 
	\begin{align}
	\label{eq z6}\begin{cases}
	\partial_tu^A-\Delta u^A =F_1^A(u) +F_2^A(u,\psi) &\text{ on }(0,T)\times M,\text{ }A=1,\ldots,q,\\
	\slashed{D}^{\pi\circ u_t}\psi_t=0&\text{ on }[0,T]\times M,\\
	u([0,T]\times M)\subset N_\delta,\\
	u|_{t=0}=u_0,\\
	\psi|_{t=0}=\psi_0,\\
	\|\psi_t\|_{L^2(M)}=1 &\text{ on }[0,T],\\
	\textup{dim}_\mathbb{K}\textup{ker}(\slashed{D}^{\pi\circ u_t})=1&\text{ on }[0,T],
	\end{cases}
	\end{align}
	where $u_0\in C^{2+\alpha}(M,N)$ with $\textup{dim}_\mathbb{K}\textup{ker}(\slashed{D}^{u_0})=1$, and $\psi_0\in \textup{ker}(\slashed{D}^{u_0})$ with $\|\psi_0\|_{L^2(M)}=1$ are given.
	
	We choose $\varepsilon, \delta, R,$ and $T$ as in Table \ref{tabelle konstanten}. By making $\varepsilon, R,$ and $T$ smaller if necessary, Lemmas \ref{lemma kern lokal konstant} and \ref{lemma zentrale Spinorabschaetzung} hold. Recall that our choices imply in particular that $u([0,T]\times M)\subset N_\delta$ for all  $u\in B_R^T(v_0)\cap \{u|_{t=0}=u_0\}$. Let $\psi(u_t)$ and $\psi^A(u_t)$, $A=1,\ldots, q$ be as in Lemma \ref{lemma zentrale Spinorabschaetzung}. In particular we have 
	\begin{align*}
	\slashed{D}^{\pi\circ u_t}\psi(u_t)=0&\text{ on }[0,T]\times M,\\
	\|\psi(u_t)\|_{L^2(M)}=1&\text{ on }[0,T],\\
	\textup{dim}_\mathbb{K}\textup{ker}(\slashed{D}^{\pi\circ u_t})=1&\text{ on }[0,T],
	\end{align*}
	and $\psi(u_t)|_{t=0}=\psi(u_0)=\psi_0$ for all $u\in B_R^T(v_0)\cap \{u|_{t=0}=u_0\}$, $t\in [0,T]$.
	
	Plugging $\psi(u_t)$ into the first line of \eqref{eq z6} it remains to find $u\in B_R^T(v_0)\cap \{u|_{t=0}=u_0\}$ that solves
	\begin{align}\label{eq z10}
	\partial_tu^A-\Delta u^A =F_1^A(u) +F_2^A(u,\psi(u)) &\text{ on }[0,T]\times M,\text{ }A=1,\ldots,q.
	\end{align}
	To that end, for $u\in B_R^T(v_0)\cap \{u|_{t=0}=u_0\}$ we consider
	\begin{align*}
	(Lu)(t,x)=v_0(t,x)+\int_0^t\int_M p(x,y,t-\tau)\left(F_1(u_\tau)(y) +F_2(u_\tau,\psi(u_\tau))(y)\right) \,dV(y)d\tau
	\end{align*}
	as in Section \ref{section fixed point operator}.
	
	In the following we show that if $T$ is small enough, then it holds that
	\begin{enumerate}
		\item $L(B_R^T(v_0)\cap \{u|_{t=0}=u_0\})\subset B_R^T(v_0)\cap \{u|_{t=0}=u_0\}$,
		\item $\|Lu-Lv\|_{X_T}\le \frac12 \|u-v\|_{X_T}$ for all $u,v\in B_R^T(v_0)\cap \{u|_{t=0}=u_0\}$.
	\end{enumerate}
	We start with i): Let $u\in B_R^T(v_0)\cap \{u|_{t=0}=u_0\}$ and consider
	\[(Lu-v_0)^A(t,x)=\int_0^t\int_M p(x,y,t-\tau)\left(F_1^A(u_\tau)(y) +F_2^A(u_\tau,\psi(u_\tau))(y)\right) \,dV(y)d\tau.\]
	We have
	\begin{align*}
	|(Lu-v_0)^A(t,x)|\le t \sup_{(s,z)\in [0,T]\times M}|F_1^A(u_s)(z) +F_2^A(u_s,\psi(u_s))(z)|
	\end{align*}
	and 
	\begin{align*}
	|\nabla_x (Lu-v_0)^A(t,x)|\le C \sqrt{t}\sup_{(s,z)\in [0,T]\times M}|F_1^A(u_s)(z) +F_2^A(u_s,\psi(u_s))(z)|
	\end{align*}
	for all $(t,x)\in [0,T]\times M$, $A=1,\ldots,q$, provided that $T\le 1$ is small enough.\footnote{Here we use that $p\ge 0$ and $\int_M p(x,y,t)\,dV(y)=1$ for all $(x,t)\in M\times (0,\infty)$. Moreover, we use that there exists $C>0$ s.t. $\int_{0}^{t}\int_M|\nabla_x p(x,y,s)|\,dV(y)ds\le C \sqrt{t}$ for all $(x,t)\in M\times [0,1]$. The latter is not difficult to show. It follows directly from the construction of the heat kernel (see e.g. \cite{Chavel}). It is shown in detail in \cite{LinWangsupp} or \cite{JWDissertation}.} Since $u\in B_R^T(v_0)$ we have 
	\begin{align}\label{eq z14}
	\|u\|_{X_T}\le \|u-v_0\|_{X_T}+\|v_0\|_{X_T} \le R+ \|v_0\|_{X_T}
	\end{align}
	hence
	\begin{align*}
	\sup_{(s,z)\in [0,T]\times M}|F_1^A(u_s)(z)|\le C(R,\|v_0\|_{X_1})
	\end{align*}
	(recall that $\pi\colon\mathbb{R}^q\rightarrow\mathbb{R}^q$ has compact support). By \eqref{eq zz22} and the triangle inequality we have
	\begin{align}\label{eq z16}
	\|\psi^A(u_s)(z)\|\le C_1(R,\psi_0)
	\end{align}
	(recall that our choice of constants in Table \ref{tabelle konstanten} implies in particular that \eqref{eq z11} holds).
	Therefore 
	\begin{align*}
	\sup_{(s,z)\in [0,T]\times M}|F_2^A(u_s,\psi(u_s))(z)|\le C_2(R,\psi_0).
	\end{align*}
	We have shown that if $T>0$ is small enough, then
	\begin{align*}
	|(Lu-v_0)^A(t,x)|&\le C_3(R,\psi_0) t,\\
	|\nabla_x (Lu-v_0)^A(t,x)|&\le C_3(R,\psi_0) \sqrt{t}
	\end{align*}
	for all $(t,x)\in [0,T]\times M$, $A=1,\ldots,q$, and for all $u\in B_R^T(v_0)\cap \{u|_{t=0}=u_0\}$. Hence for $T>0$ small enough we have $Lu\in B_R^T(v_0)$. This implies i), since $Lu|_{t=0}=u_0$.
	
	Next we show ii): Let $u,v\in B_R^T(v_0)\cap \{u|_{t=0}=u_0\}$. We have
	\begin{align*}
	&(Lu-Lv)^A(t,x)\\
	&=\int_0^t\int_M p(x,y,t-\tau)\left(F_1^A(u_\tau)(y) -F_1^A(v_\tau)(y)\right) \,dV(y)d\tau\\
	&+\int_0^t\int_M p(x,y,t-\tau)\left(F_2^A(u_\tau,\psi(u_\tau))(y)-F_2^A(v_\tau,\psi(v_\tau))(y)\right) \,dV(y)d\tau
	\end{align*}
	As above we get
	\begin{align}\label{eq z12}\begin{split}
	&|(Lu-Lv)^A(t,x)|\\
	&\le t \sup_{(s,z)\in [0,T]\times M}|F_1^A(u_s)(z) - F_1^A(v_s)(z)|\\
	&+t\sup_{(s,z)\in [0,T]\times M}|F_2^A(u_s,\psi(u_s))(z)-F_2^A(v_s,\psi(v_s))(z)|\end{split}
	\end{align}
	and
	\begin{align}\label{eq z13}\begin{split}
	&|\nabla_x(Lu-Lv)^A(t,x)|\\
	&\le C \sqrt{t} \sup_{(s,z)\in [0,T]\times M}|F_1^A(u_s)(z) - F_1^A(v_s)(z)|\\
	&+C\sqrt{t}\sup_{(s,z)\in [0,T]\times M}|F_2^A(u_s,\psi(u_s))(z)-F_2^A(v_s,\psi(v_s))(z)|\end{split}
	\end{align}
	for all $(t,x)\in [0,T]\times M$ provided that $T\le 1$ is small enough.
	We calculate
	\begin{align*}
	&|F_1^A(u) - F_1^A(v)|\\
	&=|\pi^A_{BC}(u)\langle \nabla u^B,\nabla u^C\rangle-\pi^A_{BC}(v)\langle \nabla v^B,\nabla v^C\rangle|\\
	&=|\pi^A_{BC}(u)\big(\langle \nabla u^B, \nabla u^C\rangle - \langle \nabla v^B,\nabla v^C\rangle\big)\\
	&\hphantom{=}+\big(\pi^A_{BC}(u)-\pi^A_{BC}(v)\big) \langle \nabla v^B,\nabla v^C\rangle|\\
	&\le |\pi^A_{BC}(u)||\Big(\|\nabla u^B\|\|\nabla u^C-\nabla v^C\| + \|\nabla v^C\| \|\nabla u^B-\nabla v^B\|\Big)|+\\
	&\hphantom{\le}+|\pi^A_{BC}(u)-\pi^A_{BC}(v)| \|\nabla v^B\|\|\nabla v^C\|
	\end{align*}
	Using the fact that $\pi^A_{BC}\colon\mathbb{R}^q\to \mathbb{R}$ has compact support and is (globally) Lipschitz continuous together with \eqref{eq z14} we deduce
	\begin{align}\label{eq z15}
	\sup_{(s,z)\in [0,T]\times M}|F_1^A(u_s)(z) - F_1^A(v_s)(z)|\le C(R)\|u-v\|_{X_T}.
	\end{align}
	Using \eqref{eq z14},\eqref{eq z16}, and \eqref{eq zz22} an analogous calculation yields
	\begin{align}\label{eq z17}
	\sup_{(s,z)\in [0,T]\times M}|F_2^A(u_s,\psi(u_s))(z)-F_2^A(v_s,\psi(v_s))(z)|\le C(R,\psi_0)\|u-v\|_{X_T}.
	\end{align}
	Plugging \eqref{eq z15} and \eqref{eq z17} into \eqref{eq z12} and \eqref{eq z13} yields
	\begin{align*}
	|(Lu-Lv)^A(t,x)|&\le t C(R,\psi_0)\|u-v\|_{X_T},\\
	|\nabla_x(Lu-Lv)^A(t,x)|&\le \sqrt{t} C(R,\psi_0)\|u-v\|_{X_T}
	\end{align*}
	for all $(t,x)\in [0,T]\times M$, $A=1,\ldots,q$, and for all $u,v\in B_R^T(v_0)\cap \{u|_{t=0}=u_0\}$. Now ii) follows by choosing $T$ small enough.\\
	
	Applying the Banach fixed-point theorem we get a unique $u\in B_R^T(v_0)\cap\{u|_{t=0}=u_0\}$ with $Lu=u$.\\\newline	
	\textbf{Step 2: Regularity of the fixed point:}
	In this step we show that the fixed point $u$ is an element of $C^{1,2,\alpha}((0,T)\times M,\mathbb{R}^q)$. Equation \eqref{eq z16} implies that $F_1^A(u)$ and $F_2^A(u,\psi(u))$ are bounded on $[0,T]\times M$. Therefore the $L^p$-regularity for the heat equation yields
	\[u\in W^{1,2,p}((0,T)\times M)\]
	for all $p\in (1,\infty)$. Hence we have\footnote{To show that $W^{1,2,p}((0,T)\times M)\subset C^{0,1,\alpha}((0,T)\times M)$ for $p$ large enough (the spaces are defined as in \cite{BDPhil}) one needs the Sobolev embedding and interpolation theory.}
	\[u\in C^{0,1,\alpha}((0,T)\times M).\]
	This implies
	$\psi^A(u)\in\Gamma_{C^{0,0,\alpha}}(\Sigma M\to (0,T)\times M)$, i.e.,
	\begin{align}\label{eq z28}
	\sup_{x\in M}\|\psi^A(u_.)(x)\|_{C^{\frac\alpha 2}((0,T)\to \Sigma_xM)}<\infty,
	\end{align}
	and
	\begin{align}\label{eq z29}
	\sup_{t\in(0,T)}\|\psi^A(u_t)(.)\|_{\Gamma_{C^\alpha}(\Sigma M \to M)}<\infty.
	\end{align}
	Note that by Lemma \ref{lemma zentrale Spinorabschaetzung} we have
	\begin{align*}
	\|\psi^A(u_t)(x)-\psi^A(u_s)(x)\|&\le C \|u_t-u_s\|_{C^0(M,\mathbb{R}^q)}\\
	&\le C \|u\|_{C^{0,0,\alpha}((0,T)\times M)}|t-s|^{\frac\alpha2},
	\end{align*}
	hence we get \eqref{eq z28}. One can show \eqref{eq z29} with the techniques that we developed so far, details can be found in \cite{JWDissertation}. Since $\psi^A(u)\in\Gamma_{C^{0,0,\alpha}}(\Sigma M\to (0,T)\times M)$ and $u\in C^{0,1,\alpha}((0,T)\times M)$, we get
	\[F_1(u), F_2(u,\psi(u))\in C^{0,0,\alpha}((0,T)\times M).\]
	By the Hölder-regularity for the heat equation we deduce
	\[u\in C^{1,2,\alpha}((0,T)\times M,\mathbb{R}^q).\]
	\newline	
	\textbf{Step 3: The fixed point takes values in $N$:} First let $f\colon (0,T)\times M\to \mathbb{R}^q$ be an arbitrary function s.t. $f(t,.)\in C^2(M,\mathbb{R}^q)$ for all $t\in(0,T)$ and $f(.,p)\in C^1((0,T),\mathbb{R}^q)$ for all $p\in M$. In the following we write $\|.\|_2$ and $\langle.,.\rangle_2$ for the Euclidean norm and scalar product, respectively. Similarly we write $\|.\|_g$ and $\langle.,.\rangle_g$ for the norm and scalar product of the Riemannian manifold $(M,g)$, respectively. We define
	\[\rho\colon\mathbb{R}^q\to\mathbb{R}^q\]
	by $\rho(z):=z-\pi(z)$ and
	\[\varphi\colon (0,T)\times M \rightarrow \mathbb{R}\]
	by $\varphi(t,x):=\|\rho(f(t,x))\|_2^2=\sum_{A=1}^{q}|\rho^A(f(t,x))|^2$. A straight forward calculation yields (for details we refer to \cite{JWDissertation})
	\begin{align*}
	\big(\frac{\partial}{\partial t}-\Delta_x\big)\varphi(t,x)&=-2\sum_{A=1}^q \|\nabla_x(\rho^A\circ f)(t,x)\|_g^2\\
	&\hphantom{=}+2\Big\langle \rho(f(t,x)),\rho^A_B(f(t,x))\big(\frac{\partial}{\partial t}-\Delta_x\big)f^B(t,x)\Big\rangle_2\\
	&\hphantom{=}+2\Big\langle \rho(f(t,x)),\pi^A_{CB}(f(t,x))\langle \nabla_x f^C(t,x),\nabla_x f^B(t,x)\rangle_g\Big\rangle_2
	\end{align*}
	where $\rho^A_B(z):=\frac{\partial\rho^A}{\partial z_B}(z)$. Now let $f=u$ be the solution constructed in the first step. Then we have
	\begin{align*}
	\big(\frac{\partial}{\partial t}-\Delta_x\big)\varphi&=-2\sum_{A=1}^q \|\nabla_x(\rho^A\circ u)\|_g^2\\
	&\hphantom{=}+2\Big\langle \rho(u),\rho^A_B(u)\big(F_1^B(u)+F_2^B(u,\psi(u))\big)\Big\rangle_2\\
	&\hphantom{=}+2\Big\langle \rho(u),-F_1(u)\Big\rangle_2\\
	&=-2\sum_{A=1}^q \|\nabla_x(\rho^A\circ u)\|_g^2\\
	&\hphantom{=}+2\Big\langle \rho(u),-\pi^A_B(u)F_1^B(u)+\rho^A_B(u)F_2^B(u,\psi(u))\Big\rangle_2\\
	&=-2\sum_{A=1}^q \|\nabla_x(\rho^A\circ u)\|_g^2\\
	&\le 0.
	\end{align*}
	Here we used that $\Big\langle \rho(u),-\pi^A_B(u)F_1^B(u)+\rho^A_B(u)F_2^B(u,\psi(u))\Big\rangle_2=0$. This holds because of the following: let $(t,x)$ be arbitrary. Since $u(t,x)\in N_\delta$, we have that $\rho(u(t,x))=u(t,x)-\pi(u(t,x))\in T_{\pi(u(t,x))}^\bot N$. Moreover, $\big(\pi^A_B(u(t,x))F_1^B(u)(t,x)\big)_A\in T_{\pi(u(t,x))}N$ since 
	\[\pi^A_B(u(t,x))F_1^B(u)(t,x)=(d\pi^A)_{u(t,x)}(F_1(u)(t,x))=\big((d\pi)_{u(t,x)}(F_1(u)(t,x)\big)^A\]
	and $(d\pi)_{u(t,x)}\colon\mathbb{R}^q\to T_{\pi(u(t,x))}N$. Hence,
	\[\Big\langle \rho(u(t,x)),-\pi^A_B(u(t,x))F_1^B(u)(t,x)\Big\rangle_2=0.\]
	To see that $\Big\langle \rho(u(t,x)),\rho^A_B(u(t,x))F_2^B(u,\psi(u))(t,x)\Big\rangle_2=0$ we write
	\begin{align*}
	&\Big\langle \rho(u(t,x)),\rho^A_B(u(t,x))F_2^B(u,\psi(u))(t,x)\Big\rangle_2\\
	&=\Big\langle \rho(u(t,x)),F_2(u,\psi(u))(t,x)-\pi^A_B(u(t,x))F_2^B(u,\psi(u))(t,x)\Big\rangle_2		
	\end{align*}
	and note that by definition of $F_2$ we have that $F_2(u,\psi(u))(t,x)\in T_{\pi(u(t,x))}N$ and as above we have \[\big(\pi^A_B(u(t,x))F_2^B(u,\psi(u))(t,x)\big)_A=(d\pi)_{u(t,x)}\big(F_2(u,\psi(u))(t,x)\big)\in T_{\pi(u(t,x))}N.\]
	Since $\big(\frac{\partial}{\partial t}-\Delta_x\big)\varphi(t,x)\le 0$ for all $(t,x)$ and $\varphi(0,.)=0$ on $M$, the maximum principle for the heat equation yields $\varphi(t,x)\le 0$ for all $(t,x)$. The definition of $\varphi$ implies $\varphi\ge 0$, hence $\varphi(t,x)=0$ for all $(t,x)$. We have shown $u(t,x)\in N$ for all $(t,x)$.\\\newline
	\textbf{Step 4: Uniqueness of the solution:} Let $(u^1,\psi^1)$ and $(u^2,\psi^2)$ be two solutions of the heat flow for Dirac harmonic maps as in Theorem \ref{thereom short time existence}. In particular,  $u^i\colon [0,T]\times M\rightarrow N$, $\psi_t^i\colon M\rightarrow \Sigma M\otimes (u_t^i)^*TN$ solve \eqref{eq z6} and $u^i\in C^{1,2,\alpha}((0,T)\times M,N)$, $i=1,2$.\footnote{Note that here $T>0$ is just some $T$ s.t. Theorem \ref{thereom short time existence} holds. It does not need to be related to the $T$ we constructed in the first step.} Let $R>0$ be as in the first step (i.e., $L$ is a contraction on $B_R^{\hat{T}}(v_0)\cap\{u|_{t=0}=u_0\}$ for all $\hat{T}=\hat{T}(R)>0$ small enough). We show that for $\tilde{T}\le T$ small enough, it holds that
	\[u^1,u^2\in B_R^{\tilde{T}}(v_0).\]
	We have that
	\[\|u^i(t,.)-u_0\|_{C^0(M)},\|\nabla u^i(t,.)-\nabla u_0\|_{C^0(M)}\to 0\]
	for $t\to 0$.\footnote{This can be seen as follows: we write $u=u^i$. Since $u\in C^{1,2,\alpha}((0,T)\times M)\subset C^{0,\frac{\alpha}{2}}((0,T);C^2(M))$ (c.f. \cite{BDPhil}) we have in particular that $u,\nabla u\colon (0,T)\to C^0(M)$ are $\frac{\alpha}{2}$-Hölder continuous. (In the case of $\nabla u$ we write $C^0(M)$ as target space shortly for $\Gamma(TM)$ with the $C^0$-norm.) Hence $u,\nabla u\colon (0,T)\to C^0(M)$ are uniformly continuous and can therefore be continuously extended to $u,\nabla u\colon [0,T]\to C^0(M)$. Hence $u(t,.)\to u_0$ in $C^0(M)$ as $t\to0$ and there exists a vector field $V\in\Gamma(TM)$ s.t. $\nabla u(t,.)\to V$ in $C^0(M)$ as $t\to 0$. We show $V=\nabla u_0$. To that end, notice that for every $X\in\Gamma(TM)$ we have \begin{align*}
		\int_M\langle \nabla u(t,.),X\rangle=-\int_Mu(t,.)\textup{div}(X)\xrightarrow{t\to 0} -\int_M u_0\textup{div}(X)=\int_M\langle \nabla u_0,X\rangle,
		\end{align*}
		and
		\[	\int_M\langle \nabla u(t,.),X\rangle\xrightarrow{t\to 0}\int_M\langle V,X\rangle.\]} Moreover we have 
	\[\|v_0(t,.)-u_0\|_{C^0(M)},\|\nabla v_0(t,.)-\nabla u_0\|_{C^0(M)}\to 0\]
	for $t\to 0$. Therefore for ${\tilde{T}}>0$ small enough it holds that
	\[\|u^i-v_0\|_{X_{\tilde{T}}}<R,\]
	and
	\[u^i\in B_R^{\tilde{T}}(v_0)\cap\{u|_{t=0}=u_0\}.\]
	Since $\textup{dim}_\mathbb{K}(\slashed{D}^{\pi\circ u^i_t})=1$ on $[0,T]$ we have that 
	\[\psi_t^i=\psi(u^i_t)h^i_t\]
	for all $t\in [0,\tilde{T}]$ where $\psi(u^i_t)$ is defined as in Lemma \ref{lemma zentrale Spinorabschaetzung} and $h^i_t\in \mathbb{K}$ for all $t\in[0,\tilde{T}]$ and $i=1,2$. Moreover, $h^i_t$ is of unit length since $\|\psi^i_t\|_{L^2(M)}=\|\psi(u^i_t)\|_{L^2(M)}=1$. Since the (real part of the) bundle metric on $\Sigma M$ is invariant under multiplication with elements of $\mathbb{K}$ of unit length, c.f. Lemma \ref{lemma bundle metric inv. unit quat} for the case $\mathbb{K}=\mathbb{H}$, we have that
	\[F_2(u^i,\psi^i)=F_2(u^i,\psi(u^i))\]
	on $[0,\tilde{T}]\times M$. In summary we have shown that $u_1$ and $u_2$ are elements of $B_R^{\tilde{T}}(v_0)$ that solve \eqref{eq z10}. Since the fixed point we constructed in step 1 is unique, we have that $u_1=u_2$ on $[0,\tilde{T}]\times M$ for $\tilde{T}>0$ small enough. Next we define
	\[T_0:=\sup\{t\in[\tilde{T},T]\text{ }|\text{ } u^1=u^2 \text{ on } [0,t]\times M\}\]
	By the definition of $T_0$ and continuity we have $u^1=u^2$ on $[0,T_0]\times M$. We show $T_0=T$. To that end we argue by contradiction and suppose that $T_0<T$. Then $(\hat{u}^i,\hat{\psi}^i)$ defined by $\hat{u}^i(t,x):=u^i(t+T_0,x)$, $(t,x)\in[0,T-T_0]\times M$, $\hat{\psi}^i_t:=\psi^i_{t+T_0}$ are solutions of the heat flow for Dirac harmonic maps with $T$ replaced by $T-T_0$, $u_0$ replaced by $\hat{u}_0$, where $\hat{u}_0:=u^1(T_0,.)=u^2(T_0,.)$, and $\psi_0$ replaced by $\hat{\psi}_0:=\psi^1_{T_0}=\psi^2_{T_0}$.\footnote{Since $\psi^1_{T_0},\psi^2_{T_0}\in \textup{ker}(\slashed{D}^{u^1_{T_0}})$ and $\textup{dim}_{\mathbb{K}}\textup{ker}(\slashed{D}^{u^1_{T_0}})=1$, we can assume w.l.o.g. that $\psi^1_{T_0}=\psi^2_{T_0}$. Otherwise we replace $\psi^2$ by $\psi^2h$, where $h\in\mathbb{K}$ has unit length with $\psi^1_{T_0}=\psi^2_{T_0}h$.} Using the preceding argument we get that there exists some $\tilde{T}>T_0$ s.t. $u^1=u^2$ on $[T_0,\tilde{T}]\times M$. This contradicts the definition of $T_0$. Therefore $T_0=T$.	
\end{proof}

\newpage
\bibliographystyle{abbrv}
\bibliography{references}

\end{document}